\font\rsfs=rsfs10
\newcommand{\marsfs}[1]{\mbox{\rsfs #1}}
\newcommand{\C}{\mathbb C}
\newcommand{\R}{\mathbb R}
\newcommand{\Z}{\mathbb Z}
\newcommand{\N}{\mathbb N} 
\newcommand{\Q}{\mathbb Q}
\newcommand{\T}{\mathbb T}
\newcommand{\LL}{\mathbb L}
\newcommand{\Proj}{\mathbb P}
\newcommand{\PROJ}{\operatorname{Proj}}
\newcommand{\K}{\mathbb K} 
\newcommand{\Nabla}{\boldsymbol{\nabla}} 
\newcommand{\ev}{\operatorname{ev}}
\newcommand{\age}{\operatorname{age}}
\newcommand{\Hom}{\operatorname{Hom}}
\newcommand{\End}{\operatorname{End}}
\newcommand{\Ext}{\operatorname{Ext}}
\newcommand{\Pic}{\operatorname{Pic}}
\newcommand{\Ker}{\operatorname{Ker}}
\newcommand{\Image}{\operatorname{Im}}
\newcommand{\Boxop}{\operatorname{Box}}
\newcommand{\id}{\operatorname{id}}
\newcommand{\Res}{\operatorname{Res}}
\newcommand{\rank}{\operatorname{rank}}
\newcommand{\unit}{\operatorname{\boldsymbol{1}}}
\newcommand{\Eff}{\operatorname{Eff}}
\newcommand{\Mir}{\operatorname{Mir}} 
\newcommand{\ch}{\operatorname{ch}} 
\newcommand{\tch}{\widetilde{\operatorname{ch}}} 
\newcommand{\Td}{\operatorname{Td}}
\newcommand{\pr}{\operatorname{pr}} 
\newcommand{\Vol}{\operatorname{Vol}} 
\newcommand{\Aut}{\operatorname{Aut}} 
\newcommand{\inv}{\operatorname{inv}}
\newcommand{\Spec}{\operatorname{Spec}} 
\newcommand{\NE}{\operatorname{NE}} 
\newcommand{\hNE}{\operatorname{\widehat{NE}}}
\newcommand{\Jump}{\operatorname{\boldsymbol{\Delta}}} 
\newcommand{\Osc}{\operatorname{Osc}} 
\newcommand{\PD}{\operatorname{PD}}
\newcommand{\VC}{\operatorname{VC}}
\newcommand{\bN}{\mathbf{N}} 
\newcommand{\hbN}{\widehat{\bN}}
\newcommand{\bM}{\mathbf{M}}
\newcommand{\hbM}{\widehat{\bM}} 
\newcommand{\bc}{\mathbf{c}}
\newcommand{\be}{\mathbf{e}}
\newcommand{\bJ}{\mathbf{J}}
\newcommand{\bI}{\mathbf{I}} 
\newcommand{\bbf}{\mathbf{f}}
\newcommand{\bm}{\mathbf{m}}
\newcommand{\bL}{\mathbf{L}} 
\newcommand{\bs}{{\boldsymbol{s}}} 
\newcommand{\bzero}{\boldsymbol{0}} 
\newcommand{\bF}{\boldsymbol{F}} 
\newcommand{\bvarsigma}{\boldsymbol{\varsigma}} 
\newcommand{\bD}{\boldsymbol{D}} 
\newcommand{\sfT}{\mathsf{T}}
\newcommand{\sfm}{\mathsf{m}}
\newcommand{\cO}{\mathcal{O}}
\newcommand{\cD}{\mathcal{D}}
\newcommand{\cL}{\mathcal{L}}
\newcommand{\cX}{\mathcal{X}}
\newcommand{\cY}{\mathcal{Y}} 
\newcommand{\cH}{\mathcal{H}}
\newcommand{\cM}{\mathcal{M}}
\newcommand{\tcM}{\widetilde{\cM}}
\newcommand{\cE}{\mathcal{E}}
\newcommand{\cZ}{\mathcal{Z}}
\newcommand{\cV}{\mathcal{V}}
\newcommand{\cI}{\mathcal{I}}
\newcommand{\cC}{\mathcal{C}}  
\newcommand{\cK}{\mathcal{K}}
\newcommand{\cMo}{\mathcal{M}^{\rm o}}
\newcommand{\cRz}{\mathcal{R}^{(0)}} 
\newcommand{\Sol}{\mathcal{S}}
\newcommand{\hb}{\hat{b}} 
\newcommand{\hrho}{\hat{\rho}} 
\newcommand{\hGamma}{{\widehat{\Gamma}}}
\newcommand{\hbeta}{\hat{\beta}} 
\newcommand{\hDelta}{\widehat{\Delta}} 
\newcommand{\hI}{\widehat{I}}
\newcommand{\hPi}{\widehat{\Pi}}
\newcommand{\frs}{\mathfrak{s}}
\newcommand{\frY}{\mathfrak{Y}}
\newcommand{\rsH}{\marsfs{H}} 
\newcommand{\rsF}{\marsfs{F}} 
\newcommand{\rsR}{\marsfs{R}} 
\newcommand{\chT}{\check{\T}}
\newcommand{\chY}{\check{Y}} 
\newcommand{\chcY}{\check{\cY}}
\newcommand{\chcX}{\check{\cX}} 
\newcommand{\chD}{\check{D}} 
\newcommand{\chfrY}{{\check{\frY}}}
\newcommand{\txi}{\tilde{\xi}} 
\newcommand{\tbxi}{\tilde{\boldsymbol{\xi}}}
\newcommand{\tvarsigma}{\tilde{\varsigma}}
\newcommand{\tbvarsigma}{\tilde{\bvarsigma}}
\newcommand{\tUpsilon}{\widetilde{\Upsilon}} 
\newcommand{\tbUpsilon}{\widetilde{\boldsymbol{\Upsilon}}}
\newcommand{\tE}{\widetilde{E}} 
\newcommand{\vW}{\vec{W}} 
\newcommand{\ov}{\overline}
\newcommand{\iu}{\pmb{\mathtt{i}}} 
\newcommand{\ceil}[1]{\lceil #1\rceil}
\newtheorem{theorem}{Theorem}[section]
\newtheorem{lemma}[theorem]{Lemma}
\newtheorem{proposition}[theorem]{Proposition}
\newtheorem{corollary}[theorem]{Corollary} 
\newtheorem{question}{Question} 
\theoremstyle{definition}
\newtheorem{definition}[theorem]{Definition}
\newtheorem{remark}[theorem]{Remark}
\newtheorem{example}[theorem]{Example}
\def\pair#1#2{\langle #1,#2\rangle}
\def\parfrac#1#2{\frac{\partial{#1}}{\partial #2}}
\def\corr#1{\left\langle #1 \right\rangle}
\title{Quantum Cohomology and Periods}
\author{Hiroshi Iritani}
\address{Department of Mathematics, Kyoto University, 
Kitashirakawa-Oiwake-cho, Sakyo-ku, Kyoto, 606-8502, Japan}
\email{iritani@math.kyoto-u.ac.jp}  
\thanks{This research is supported by Grant-in-Aid for 
Young Scientists (B) 22740042.}
\keywords{quantum cohomology, mirror symmetry, Gamma class, 
$K$-theory, period, oscillatory integral, variation of Hodge structure, 
GKZ system, toric variety, orbifold, }
\subjclass[2000]{14N35, 14D05, 14D07, 14J33, 32G20, 53D37}
\begin{document} 
\begin{abstract} 
In a previous paper \cite{Iritani:int}, 
the author introduced a $\Z$-structure in quantum cohomology 
defined by the $K$-theory and the Gamma class 
and showed that it is compatible with mirror symmetry 
for toric orbifolds. 
Applying the quantum Lefschetz principle to the previous results, 
we find an explicit relationship between solutions 
to the quantum differential equation for toric complete intersections  
and the periods (or oscillatory integrals) of their mirrors. 
We describe in detail the mirror isomorphism of variations of 
$\Z$-Hodge structure for a mirror pair of 
Calabi-Yau hypersurfaces (Batyrev's mirror). 
\end{abstract} 
\maketitle

\section{Introduction} 
Hodge theoretic mirror symmetry is concerned with 
the equivalence of Hodge structures from 
symplectic geometry (A-model or Gromov-Witten theory) of $Y$ 
and complex geometry (B-model or Kodaira-Spencer theory) 
of the mirror $\check{Y}$. 
In \cite{Iritani:int}, we introduced 
a $\Z$-structure in the A-model Hodge theory 
in terms of the $K$-group and the $\hGamma$-class of $Y$. 
When $Y$ is a weak Fano compact toric orbifold, 
we showed that this $\Z$-structure 
in the A-side is in fact mirror 
to the natural $\Z$-structure in the B-side. 
This was based on the mirror theorem \cite{CCIT:toric} 
for toric orbifolds which will be shown 
in joint work with Coates, Corti and Tseng 
and a calculation of oscillatory integrals on the B-side. 
In this paper we extend the previous results 
in \cite{Iritani:int} to the case of 
complete intersections in toric orbifolds. 

For simplicity, we explain the case where 
$Y$ is a Calabi-Yau manifold. 
The variation of Hodge structure on the A-side is 
given by the trivial holomorphic vector bundle 
$\rsH = H^*(Y) \times H^2(Y) \to H^2(Y)$ endowed 
with the flat Dubrovin connection 
\[
\nabla_V = d_V + V\circ_\tau, \qquad 
V \in H^2(Y)  
\]
where $V \circ_\tau$ is the quantum multiplication by $V$ 
at $\tau\in H^2(Y)$. 
%This is called the \emph{quantum $D$-module}. 
The Hodge filtration and the polarization form 
are given by 
$\rsF^{\,p} = H^{\le 2(\dim Y-p)}(Y)$ 
and $Q(\alpha,\beta) = (2\pi\iu)^{\dim Y} 
\int_Y((-1)^{\frac{\deg}{2}}\alpha)\cup \beta$ 
respectively. 
For $\cE \in K(Y)$, 
we have a unique flat section $\frs(\cE)$ of 
the Dubrovin connection satisfying 
\[
\frs(\cE) \sim (2\pi\iu)^{-\dim Y} e^{-\tau} \left( 
\hGamma_Y \cup (2\pi\iu)^{\frac{\deg}{2}} \ch(\cE) \right) 
\]
in the large radius limit, i.e.\ 
as $e^{\pair{\tau}{d}} \to 0$ for all nonzero 
effective classes $d\in H_2(Y;\Z)$. 
The Gamma class $\hGamma_Y$ here plays 
the role of a ``square root" of the Todd class 
(see \eqref{eq:half}) so that we have 
$Q(\frs(\cE_1),\frs(\cE_2)) = \chi(\cE_1,\cE_2)$ 
by Hirzebruch-Riemann-Roch. 
The \emph{$\hGamma$-integral structure} 
is defined to be the $\Z$-local system consisting of 
the flat sections $\frs(\cE)$, $\cE \in K(Y)$. 
We call the pairing 
\[
\Pi(\phi,\cE) := Q(\phi(\tau), \frs(\cE)(\tau)) 
%(2\pi\iu)^{\dim Y} \int_Y 
%((-1)^{\frac{\deg}{2}} \phi(\tau)) \cup 
%\frs(\cE)(\tau) 
\]
of any section $\phi(\tau)\in \rsH\,$ 
with the flat section $\frs(\cE)$  
the \emph{A-period} of $Y$. 
Our main theorem identifies the A-periods of $Y$ 
with the usual periods of the mirror $\chY$. 

Let $\cY$ be a quasi-smooth Calabi-Yau hypersurface 
in a weak Fano Gorenstein toric orbifold $\cX$. 
Here we allow $\cY$ to have orbifold singularities. 
Let $\Delta\subset \bN_\R$ be the fan polytope of $\cX$. 
The \emph{Batyrev mirror} of $\cY$ is the 
hypersurface $\chY_\alpha = \{W_\alpha(t) = 1\}$ 
in the algebraic torus 
$\chT = \Hom(\bN,\C^\times)\cong (\C^\times)^n$ 
defined by the Laurent polynomial 
$W_\alpha(t)= \sum_{b\in \Delta\cap \bN} \alpha_b t^b$ on $\chT$. 
The affine hypersurface $\chY_\alpha$ can be compactified to 
a Calabi-Yau orbifold $\chcY_\alpha$. 
\begin{theorem}[Theorems \ref{thm:Aperiod=Bperiod}, 
\ref{thm:intstr_Batyrevmirror}, 
\ref{thm:Opt}]
\label{thm:introd}
The A-period for $\cY$ associated to 
$\cE\in K(\cY)$ can be written as a period of $\chY_\alpha$ 
for some integral cycle $C_\cE$ 
if either $\cE$ is pulled-back from the 
ambient toric orbifold $\cX$ or $\cE = \cO_{\rm pt}$: 
\begin{align}
\label{eq:periodequality} 
\Pi(\Upsilon_v, \cE)(\varsigma(\alpha)) 
& = \int_{C_\cE} 
(-1)^{\age(v)} \age(v)! 
\Res\left( \frac{
\alpha^v t^v \frac{dt_1}{t_1} \wedge 
\cdots \wedge \frac{dt_n}{t_n}}{(W_\alpha(t)-1)^{\age(v)+1}} 
\right).  
\end{align} 
Here $\Upsilon_v$ is a section of $\rsH$ 
which (see \eqref{eq:Upsilon}) is asymptotically 
the same in the large radius limit 
as the unit class $\unit_v$ 
on the twisted sector associated to $v\in \Boxop$ 
and $\varsigma(\alpha)$ is the mirror map.  
\end{theorem}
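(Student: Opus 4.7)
The plan is to reduce the hypersurface statement to the ambient toric case by quantum Lefschetz, apply the mirror theorem of \cite{Iritani:int} to the ambient toric orbifold $\cX$, and finally convert the resulting oscillatory/period integral on the Landau--Ginzburg mirror $(\chT, W_\alpha)$ into a residue integral on the Batyrev mirror hypersurface $\chY_\alpha$ itself. First I would set up the A-side half of the identity: using quantum Lefschetz, I would express the Dubrovin connection, the fundamental solution $L_\cY$, and the flat section $\frs(\cE)$ of $\cY$ (for $\cE$ pulled back from the ambient $\cX$) in terms of ``twisted'' data on $\cX$, where the twist is by the Euler class of the anticanonical line bundle $\cO_\cX(-K_\cX)$ whose section cuts out $\cY$. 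This step should rewrite the A-period $\Pi(\Upsilon_v,\cE)$ of $\cY$ as a twisted A-period on $\cX$, already evaluated at the point determined by the mirror map $\varsigma(\alpha)$.

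Second, I would apply the toric mirror theorem from \cite{Iritani:int} (the ambient case proved there, building on \cite{CCIT:toric}) to the twisted A-period on $\cX$. By that theorem, a flat section $\frs(\cE)$ on the ambient toric side is mirror to an oscillatory integral
\begin{equation*}
\int_{\Gamma_\cE} e^{-W_\alpha(t)/z}\,\frac{dt_1}{t_1}\wedge\cdots\wedge \frac{dt_n}{t_n}
\end{equation*}
over a Lefschetz thimble $\Gamma_\cE \subset \chT$ determined by $\cE$. The anticanonical twist changes the integrand in a predictable way: Laplace-transforming against the hypersurface equation replaces the oscillatory factor $e^{-W_\alpha/z}$ by a rational factor of the shape $1/(W_\alpha(t)-1)^{k+1}$, producing an integrand of exactly the form that appears inside the residue on the right-hand side of \eqref{eq:periodequality}. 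The additional factor $\alpha^v t^v$ comes from the logarithmic asymptotics of the twisted $I$-function on the Box element $v$, and the pole order $k+1$ is controlled by $k=\age(v)$, which is precisely the order to which the relevant component of the I-function vanishes at the large radius limit.

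Third, I would pass from the toric oscillatory integral on $\chT$ to a period on the hypersurface $\chY_\alpha$ via the residue/Cayley construction. The Lefschetz thimble $\Gamma_\cE$ is deformed to a tubular neighborhood of a cycle $C_\cE$ on $\chY_\alpha$, and one extracts the residue along $\{W_\alpha = 1\}$ with multiplicity $\age(v)+1$; the combinatorial factor $(-1)^{\age(v)}\age(v)!$ is exactly what appears when one differentiates $(W_\alpha - 1)^{-(k+1)}$ through the residue symbol. The case $\cE = \cO_{\rm pt}$ must be argued separately because $\cO_{\rm pt}$ is not pulled back from $\cX$; I would identify it with the distinguished flat section characterized by its most singular behaviour at the large radius limit (equivalently, the monodromy-invariant class around the MUM point), and match $C_{\cO_{\rm pt}}$ with the corresponding vanishing cycle on $\chcY_\alpha$ via a direct asymptotic computation of the A-period.

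The main obstacle is the bookkeeping in the orbifold/twisted-sector part of Step~2 and Step~3: matching, for each $v \in \Boxop$, the Chen--Ruan unit $\unit_v$ on the hypersurface twisted sector against the correct component of the twisted $I$-function on $\cX$, and then showing that quantum Lefschetz produces exactly the pole order $\age(v)+1$ together with the constant $(-1)^{\age(v)}\age(v)!$ inside the residue. This requires a careful expansion of the hypergeometric $I$-function along Box elements and a comparison of the age grading on $\cY$ with that on $\cX$, including the contribution of the normal direction to the hypersurface. Once this combinatorial identity between the twisted $I$-function and the residue is established, the equality \eqref{eq:periodequality} follows by combining the three steps and substituting the mirror map $\varsigma(\alpha)$.
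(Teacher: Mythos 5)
Your overall route matches the paper's: reduce to the Euler-twisted theory of the ambient toric orbifold $\cX$ via quantum Lefschetz (Proposition \ref{prop:Eulertw_Y}, Corollary \ref{cor:iota_quantumhom}, Theorem \ref{thm:mirrorthm_tw}), invoke the toric mirror theorem of \cite{Iritani:int} for $\cX$, and then convert the resulting oscillatory integral over a Lefschetz thimble in $\chT$ into a residue integral on $\chY_\alpha$.

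However, your description of the conversion step has a genuine imprecision that hides where the real work is. You write that ``Laplace-transforming against the hypersurface equation replaces $e^{-W_\alpha/z}$ by a rational factor $1/(W_\alpha(t)-1)^{k+1}$,'' and that one then ``deforms the thimble to a tubular neighborhood of a cycle on $\chY_\alpha$.'' In the paper the Laplace transform (Proposition \ref{prop:Laplacetrans}) is a partial transform in the auxiliary flow variables $r_j$ along the one-parameter subgroups $r_j^{\txi_j}$ acting on the moduli space $\cM$ --- not against $W_\alpha$ directly --- and, crucially, it produces an integral over $(\R_{>0})^c$ of a rational function in the fiber coordinates $u_j$ of $\vW_\alpha$. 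To extract the residue along $\{W_\alpha = 1\}$ one must additionally (i) justify the exchange of integrals by Fubini, (ii) analytically continue the transformed period in the dual variable $s_j$ past $\R_{<0}$, and (iii) compute the jump across the branch cut via the Cauchy integral formula; the jump is what produces the factor $(2\pi\iu)^c$ and localizes the integration to the tube cycle. A naive geometric ``deformation of the thimble'' is precisely what the paper avoids asserting, replacing it with this chain of exact analytic identities.

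Two smaller points. First, the factor $(-1)^{\age(v)}\age(v)!$ does not come from ``differentiating $(W_\alpha-1)^{-(k+1)}$ through the residue'' alone --- it is already built into the normalization of the map $\rsR(t^b t_0^k) = (-1)^{k-1}(k-1)!\,t^b/(W_\alpha-1)^k\,\frac{dt}{t}$ that the paper borrows from Batyrev, with $k = \age(v)+1$; your explanation conflates the origin of the power of $z$ with the origin of the sign and factorial. Second, referring to $C_{\cO_{\rm pt}}$ as a ``vanishing cycle'' is misleading: the whole point of Theorem \ref{thm:Opt} and of the elliptic-curve example is that $[\cO_{\rm pt}]$ is generically \emph{not} in $\iota^*K(\cX)$, and correspondingly $C(\alpha)$ lies outside the vanishing-cycle lattice $H^{\rm vc}_{\rm B,\Z}$. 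The paper constructs $C(\alpha)$ via Przyjalkowski's torus-slicing argument --- sweeping the compact tori $(S^1_\epsilon)^n$ through $\chY_\alpha$ --- and proves the period identity by an exact residue calculation (geometric series expansion of $1/(1-W_\alpha)$), not by an asymptotic expansion of the A-period.
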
  
We calculate the left-hand side of \eqref{eq:periodequality} 
as explicit hypergeometric series (Theorem \ref{thm:mirrorthm_tw})
by applying the quantum Lefschetz principle 
\cite{Coates-Givental, CCIT:tw} 
to the mirror theorem \cite{CCIT:toric} for toric orbifolds. 
Theorem \ref{thm:introd} then follows from the Laplace 
transformation of the previous results in \cite{Iritani:int}. 
Similar results for toric complete intersections 
are given in Theorem \ref{thm:Aperiod=Bperiod}.  
We use Theorem \ref{thm:introd} to establish  
the mirror isomorphism between the 
\emph{ambient A-model VHS} of $\cY$ 
and the \emph{residual B-model VHS} of $\chcY_\alpha$ 
which preserves certain integral structures 
(Theorem \ref{thm:intstr_Batyrevmirror}). 

The present work is motivated by Givental's celebrated 
paper \cite{Givental:mirrorthm-toric} on mirror symmetry 
for toric complete intersections, where 
Givental remarked that each component of the $I$-function 
can be written as an oscillatory integral. 
In terms of a hypergeometric differential system, 
essentially the same integral structure has been 
identified in the work of Borisov-Horja \cite{Borisov-Horja:FM} 
and Hosono \cite{Hosono:central}. 
The $\hGamma$-structure was also proposed 
by Katzarkov-Kontsevich-Pantev \cite{KKP} independently. 
Our results give a partial affirmative answer 
to the conjecture of  
Hosono \cite[Conjecture 6.3]{Hosono:central}. 

%An important point in this paper is 
%that we are working with orbifolds. 
%It turns out that 
The concept of orbifold has been 
a rich source of ideas in mirror symmetry. 
For example, Batyrev's mirror may not admit a full 
crepant resolution for dimension bigger than 3. 
By the development of orbifold Gromov-Witten theory 
\cite{Chen-Ruan:new_coh_orb, Chen-Ruan:GW, AGV}, 
we can now work with partial resolutions with 
orbifold singularities. 
In this paper, we encounter a phenomenon of 
\emph{multi-generation}\footnote{This does \emph{not} 
mean that the quantum $D$-module is not cyclic.} 
of orbifold quantum $D$-modules. 
This phenomenon was first observed by Guest-Sakai \cite{Guest-Sakai} 
(in a different language) 
for a degree 3 Fano hypersurface in $\Proj(1,1,1,2)$. 
For an orbifold hypersurface, it can happen that 
the ambient part\footnote
{The ambient part is the subbundle of the quantum $D$-module 
with fiber $\iota^* H_{\rm orb}^*(\cX)\subset H_{\rm orb}(\cY)$ 
where $\iota \colon \cY \to \cX$ is the inclusion 
of the hypersurface $\cY$ into the ambient toric orbifold $\cX$.} 
of the small quantum $D$-module 
is not generated by the single unit class $\unit$ as an 
$\cO[z]\langle z \partial \rangle$-module\footnote{
On the other hand, when $z$ inverted, it is generated 
by $\unit$ as an 
$\cO[z,z^{-1}]\langle z\partial\rangle$-module 
under the assumption on the ambient toric orbifold 
in this paper.},  
but is generated by $\unit$ 
and the unit classes $\unit_v$ supported on twisted sectors. 
Here $\partial$ denotes the derivative in the 
$H^{\le 2}_{\rm orb}$-direction and $z$ is an additional 
variable in the quantum $D$-module 
(see Definition \ref{def:QDM}). 
For the A-model VHS of a Calabi-Yau hypersurface, 
this means that each Hodge filter $\rsF^{\, p}$ 
may not be generated by 
$\le (\dim \cY-p)$ times derivatives of the 
top filter $\rsF^{\, \dim \cY}$. 
% In general, the same phenomenon can occur 
% for toric orbifolds themselves \cite{CCIT:toric}. 
In fact, we will describe the quantum $D$-module of 
toric Calabi-Yau hypersurfaces in terms of the 
\emph{multi-GKZ system} 
(Theorem \ref{thm:CYhyp_multiGKZ}) --- a GKZ system 
defined by multiple generators.  
The same generalization of the GKZ system was  
proposed in a recent work 
by Borisov-Horja \cite{Borisov-Horja:better} 
who called it \emph{better behaved GKZ system}. 
% This is related to the point that we needed 
% to study the case $v\neq 0$ (twisted sector) 
% in Theorem \ref{thm:introd}. 
This multi-generation is a reason why 
we needed to show Theorem \ref{thm:introd}  
also for twisted sectors  $v\neq 0$. 

\vspace{5pt} 
\noindent
{\bf Acknowledgments} 
The author has learned a lot 
from various joint works with 
Alessandro Chiodo, Tom Coates, Alessio Corti, 
Sergey Galkin, Vasily Golyshev, 
Yongbin Ruan and Hsian-Hua Tseng. 
He would like to thank them all. 
He also would like to thank Yukiko Konishi and 
Satoshi Minabe for very helpful discussions concerning 
their work \cite{Konishi-Minabe:local}. 
He is grateful to Etienne Mann and Thierry Mignon 
for informing the author of their work \cite{Mann-Mignon} 
and to Martin Guest for very helpful comments on a draft 
version of this paper. 
 
\section{Preliminaries} 

\subsection{Orbifold Gromov-Witten Invariants} 
\label{subsec:orbGW}
Gromov-Witten theory for orbifolds has been developed 
by Chen-Ruan for symplectic orbifolds and 
by Abramovich-Graber-Vistoli for smooth Deligne-Mumford stacks. 
Here we fix notation for orbifold Gromov-Witten invariants. 
For the details of the subject, 
we refer the reader to the original 
articles \cite{Chen-Ruan:new_coh_orb, Chen-Ruan:GW, AGV}. 

Let $\cX$ be a proper smooth Deligne-Mumford stack 
over $\C$ and $X$ be its coarse moduli space. 
Set $n = \dim_\C \cX$. 
We assume that $X$ is projective. 
Let $\cI\cX$ be the inertia stack, which is the 
fiber product $\cX \times_{\cX \times \cX} \cX$ 
of the diagonal morphisms $\Delta\colon 
\cX \to \cX \times \cX$. A $\C$-valued 
point of $\cI\cX$ is a pair $(x,g)$ of a 
$\C$-valued point $x \in \cX$ and 
a stabilizer $g\in \Aut(x)$ at $x$.  
Let 
\[
\cI\cX = \bigsqcup_{v \in \sfT} \cX_v 
= \cX_0 \sqcup \bigsqcup_{v\in \sfT'} \cX_v, 
\quad \cX_0 = \cX. 
\]
be the decomposition of $\cI\cX$ into connected 
components. The index set $\sfT$ contains a special 
element $0\in \sfT$ corresponding to the 
trivial stabilizer $g=1$. 
We set $\sfT' = \sfT \setminus \{0\}$. 
Let $\age(v) \in \Q_{\ge 0}$ be the age (or degree shifting 
number) of the component $\cX_v$. 
The \emph{Chen-Ruan orbifold cohomology group}  
$H_{\rm orb}^*(\cX)$ is 
the $\Q$-graded vector space given by 
\[
H_{\rm orb}^p(\cX) := \bigoplus_{\{v\in \sfT | 
 p-2\age(v) \in 2 \Z\} } 
H^{p-2\age(v)}(\cX_v;\C), \quad p \in \Q.   
\] 
Throughout the paper, we ignore odd cohomology 
classes in Gromov-Witten theory 
i.e.\ elements in $H^{p-2\age(v)}(\cX_v)$ 
with $p-2\age(v)$ odd. 
($H_{\rm orb}^*(\cX)$ is sometimes denoted by 
$H_{\rm CR}^*(\cX)$ in the literature.)
We have an involution $\inv \colon \cI\cX \to \cI\cX$ 
given by $(x,g) \mapsto (x,g^{-1})$. 
This induces an involution 
$\inv^* \colon H_{\rm orb}^*(\cX) \to H_{\rm orb}^*(\cX)$. 
The orbifold Poincar\'{e} pairing 
$(\cdot,\cdot)_{\rm orb}\colon H^*_{\rm orb}(\cX) 
\otimes H^*_{\rm orb}(\cX) \to \C$ 
is defined by  
\[
(\alpha,\beta)_{\rm orb} := 
\int_{\cI\cX} \alpha \cup \inv^*\beta. 
\]
This is a nondegenerate symmetric bilinear form 
of degree $-2n$.  
Let $\cX_{0,l,d}$ denote the \emph{moduli stack of 
stable maps} of genus 0, $l$-pointed and 
degree $d \in H_2(X,\Z)$. 
(This is the same as the 
\emph{stack of twisted stable maps}  $\cK_{0,l}(\cX,d)$ in 
\cite{AGV}.) This is equipped with a 
virtual fundamental class $[\cX_{0,l,d}]^{\rm vir}
\in H_*(\cX_{0,l,d};\Q)$ and the evaluation maps 
\[
\ev_i \colon \cX_{0,l,d} \to \ov{\cI\cX}, \quad i=1,\dots, l   
\]
to the rigidified inertia stack\footnote{
The rigidified inertia stack $\ov{\cI\cX}$ is obtained from
$\cI\cX$ by taking the quotient of the automorphism group  
at $(x,g)\in \cI\cX$ by the cyclic group generated by 
$g$.} 
$\ov{\cI\cX}$ (see \cite{AGV}). 
Take $\alpha_1,\dots,\alpha_l \in H^*_{\rm orb}(\cX)$ 
and nonnegative integers $k_1,\dots,k_l$.  
The \emph{orbifold Gromov-Witten invariants} are defined by 
\[
\corr{\alpha_1 \psi^{k_1}, \alpha_2 \psi^{k_2}, \dots, 
\alpha_l \psi^{k_l}}_{0,l,d}  
:= \int_{[\cX_{0,l,d}]^{\rm vir}} 
\prod_{i=1}^l (\ev_i^*(\alpha_i) \cup \psi_i^{k_i}). 
\]
Because $\ov{\cI\cX}$ and $\cI\cX$ are the same as 
topological spaces, we can define the 
pull-back $\ev_i^*(\alpha_i)$ for $\alpha_i\in H^*_{\rm orb}(\cX)$. 
The class $\psi_i$ is the first Chern 
class of the $i$-th universal cotangent line bundle $\cL_i
\to \cX_{0,l,d}$ whose fiber at a stable map 
$f \colon \cC \to \cX$ is the cotangent space 
$T_{x_i}^*C$ at the $i$-th marked point 
of the \emph{coarse} domain curve $C$.  

\subsection{Twisted Invariants} 
Following \cite{Coates-Givental, Tseng:QRR, CCIT:tw}, 
we introduce the orbifold Gromov-Witten invariants 
\emph{twisted} by a vector bundle $\cV$ on $\cX$ and 
a characteristic class $\bc$ . 
We use these invariants to calculate the Gromov-Witten 
invariants of a complete intersection in $\cX$. 
Let $\bc(\cdot) = \exp(\sum_{k=0}^{\infty} s_k\ch_k(\cdot))$ 
be a universal invertible multiplicative characteristic class 
with parameters $\bs=(s_0,s_1,s_2,\dots)$. 
Let $\cI\cV$ be the vector bundle on $\cI\cX$ whose 
fiber at $(x,g)$ is the $g$-fixed subspace 
of $\cV_x$. 
In the twisted theory, the pairing $(\cdot,\cdot)_{\rm orb}$ 
is replaced with the following twisted 
Poincar\'{e} pairing: 
\[
(\alpha, \beta)_{\rm orb}^{\bc} = 
\int_{\cI\cX} \alpha \cup \inv^*(\beta) \cup \bc(\cI\cV).  
\] 
Using the universal family 
$u\colon \cC_{0,l,d} \to \cX$ over $\cX_{0,l,d}$, 
we define a $K$-group element 
$\cV_{0,l,d} \in K^0(\cX_{0,l,d})$ by 
$\cV_{0,l,d} = R\pi_* u^* \cV$. 
\[
\begin{CD}
\cC_{0,l,d} @>{u}>>  \cX \\ 
@V{\pi}VV @. \\ 
\cX_{0,l,d}  
\end{CD}
\]
Define the \emph{twisted Gromov-Witten invariants} by 
\begin{equation}
\label{eq:twistedinv} 
\corr{\alpha_1 \psi^{k_1}, \alpha_2 \psi^{k_2}, \dots, 
\alpha_l \psi^{k_l}}_{0,l,d}^{\bc}   
:= \int_{[\cX_{0,l,d}]^{\rm vir}} 
\bc(\cV_{0,l,d}) \cup 
\prod_{i=1}^l \ev_i^*(\alpha_i) \psi_i^{k_i}.  
\end{equation} 
Note that the twisted invariants equal the original ones 
when $\bc$ is trivial (i.e.\ $\bc\equiv 1$). 

% From Tseng's orbifold quantum Riemann-Roch \cite{Tseng:QRR}, 
% it follows that the twisted invariants satisfy 
% the String Equation (SE), the Dilaton Equation 
% (DE) and the Topological Recursion Relation (TRR) 
% listed e.g.\ in \cite[Section 1]{Pandharipande}. 
% (In the TRR, we need to use the twisted Poincar\'{e} pairing.) 
% This is because these equations 
% correspond to certain special geometric properties 
% of Givental's Lagrangian cone (see \cite{Givental:symplectic})  
% and the symplectic operator in 
% Tseng's quantum Riemann-Roch preserves such properties.  
% These equations show that the twisted Gromov-Witten 
% theory at genus zero gives rise to the 
% quantum cohomology and $D$-module. 

\subsection{Twisted Quantum Cohomology} 
%Here we introduce the twisted quantum cohomology.  
% following \cite{Iritani:genmir}. 
We can define both untwisted and twisted quantum cohomology, 
but we begin with the twisted version 
because the untwisted version is obtained from it 
by the specialization $\bc =1$. 
Let $\Eff_\cX \subset H_2(X;\Z)$ denote the semigroup 
generated by effective curves. The Novikov ring $\Lambda$ 
is defined to be the completion of the group ring 
$\C[\Eff_\cX]$. For a curve class $d\in \Eff_\cX$, 
let $Q^d$ be the corresponding element in $\Lambda$. 
Define $\Lambda_\bs$ to be the completion 
of $\C[\Eff_\cX][s_0,s_1,s_2,\dots]$ with respect to 
the additive valuation $v$ given by 
\[
v(Q^d)= \int_d \omega, \quad v(s_k) = k+1.  
\] 
where $\omega$ is a K\"{a}hler class of $\cX$. 
Let $\{\phi_1,\dots,\phi_N\}
\subset H_{\rm orb}^*(\cX)$ be a homogeneous 
$\C$-basis, $\{\tau^1,\dots, \tau^N\}$ be the dual co-ordinates 
on $H_{\rm orb}^*(\cX)$ and 
$\tau = \sum_{i=1}^N \tau^i \phi_i$ be a 
general point on $H_{\rm orb}^*(\cX)$. 
The \emph{twisted quantum product} 
$\bullet^\bc_\tau$ is defined by 
the formula: 
\begin{equation} 
\label{eq:twistedquantumprod} 
(\alpha\bullet^\bc_\tau \beta, \gamma)_{\rm orb}^{\bc}  
= \sum_{l\ge 0} \sum_{d\in \Eff_\cX} 
\corr{\alpha,\beta,\gamma,
\tau,\dots,\tau}_{0,l+3,d}^{\bc } \frac{Q^d}{l!} 
\end{equation} 
where $\alpha,\beta,\gamma \in H_{\rm orb}^*(\cX)$. 
This defines a unique element 
$\alpha\bullet^\bc_\tau \beta$ in $H_{\rm orb}^*(\cX)\otimes 
\Lambda_\bs[\![\tau]\!]$. Here $\Lambda_\bs[\![\tau]\!] 
:= \Lambda_\bs[\![\tau^1,\dots,\tau^N]\!]$. 
The product $\bullet^\bc_\tau$ 
is extended bilinearly over $\Lambda_\bs[\![\tau]\!]$ 
and defines a ring structure on $H_{\rm orb}^*(\cX)\otimes 
\Lambda_\bs[\![\tau]\!]$. 
We call the ring $(H_{\rm orb}^*(\cX) \otimes \Lambda_\bs[\![\tau]\!], 
\bullet^\bc_\tau)$ 
the \emph{twisted quantum cohomology}. 
%We set $\bullet_\tau := \bullet^\bc_\tau|_{\bc =0}$.  
For a topological ring $R$ with an additive valuation 
$v \colon R \to \R \cup \{\infty\}$, we 
define $R\{z,z^{-1}\}$ to be the space of 
all power series $\sum_{k\in \Z} a_k z^k$ with 
$a_k\in R$ such that $\lim_{|k| \to \infty} v(a_k) = \infty$. 
Let $R\{z\}$ (resp.\ $R\{z^{-1}\}$) 
denote the subspace of $R\{z,z^{-1}\}$ 
consisting of nonnegative (resp.\ nonpositive) power series in $z$. 
These are rings when $R$ is complete. 
We define the \emph{Dubrovin connection} 
$\Nabla^{\bc}_i \colon H^*_{\rm orb}(\cX) \otimes 
\Lambda_{\bs}\{z\}[\![\tau]\!] \to z^{-1} 
H^*_{\rm orb}(\cX)\otimes \Lambda_{\bs}\{z\}[\![\tau]\!]$ 
by 
\begin{equation*} 
%\label{eq:Dub}
\Nabla^{\bc}_i = \parfrac{}{\tau^i} 
+ \frac{1}{z} \phi_i\bullet^\bc_\tau. 
\end{equation*} 
The differential equation $\Nabla^{\bc}_i s(\tau,z) = 0$ 
for a cohomology-valued function $s$ is called 
the \emph{quantum differential equation}. 
Define $\bL^\bc(\tau,z) \in \End(H^*_{\rm orb}(\cX)) 
\otimes \Lambda_\bs\{z^{-1}\}[\![\tau]\!]$ by 
\begin{equation}
\label{eq:twistedfundsol}
(\bL^\bc(\tau,z) \alpha, \beta)_{\rm orb}^{\bc} 
= (\alpha,\beta)_{\rm orb}^{\bc} 
+ \sum_{\substack{(d,l) \neq (0,0) \\ 
d\in \Eff_\cX,\, l\ge 0}}   
\corr{\frac{\alpha}{-z-\psi}, \tau, \dots, \tau,\beta}^{\bc}_{0,l+2,d} 
\frac{Q^d}{l!}.  
\end{equation} 
Here $1/(-z-\psi)$ in the correlator should be 
expanded in the series $\sum_{k\ge 0} (-z)^{-k-1} \psi^k$. 
\begin{proposition}
The $\End(H^*_{\rm orb}(\cX))$-valued function 
$\bL^\bc(\tau,z)$ gives a fundamental solution to the quantum 
differential equation: It satisfies 
\[
\Nabla^{\bc}_i (\bL^\bc(\tau,z) \alpha) = 0, \quad 1\le i\le N, 
\quad \forall \alpha\in H^*_{\rm orb}(\cX)   
\]
and $\bL^\bc(\tau,z) = \id + O(Q,\tau)$. 
We also have 
\begin{equation} 
\label{eq:unitarityofbL}
(\bL^\bc(\tau,-z) \alpha, \bL^\bc(\tau,z) \beta)_{\rm orb}^{\bc} 
= (\alpha, \beta)_{\rm orb}^{\bc}.  
\end{equation} 
\end{proposition}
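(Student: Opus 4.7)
The plan is to verify the three claims in turn. First, the asymptotic normalisation $\bL^\bc=\id+O(Q,\tau)$ is immediate from \eqref{eq:twistedfundsol}: the correction $(\bL^\bc\alpha,\beta)_{\rm orb}^{\bc}-(\alpha,\beta)_{\rm orb}^{\bc}$ is a sum restricted to $(d,l)\neq(0,0)$, so every term carries either a factor $Q^d$ with $d$ nonzero effective, or at least one insertion of $\tau$ from the $l\geq 1$ copies in the correlator. Hence the correction lies in the maximal ideal of $\Lambda_\bs[\![\tau]\!]$.

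For flatness, I would pair $\Nabla^\bc_i\bL^\bc(\tau,z)\alpha$ with $\beta$ via $(\cdot,\cdot)_{\rm orb}^{\bc}$ and show the two resulting sums cancel. Differentiating \eqref{eq:twistedfundsol} in $\tau^i$ inserts an extra marked point with $\phi_i$ into the correlators, while $z^{-1}\phi_i\bullet^\bc_\tau\bL^\bc(\tau,z)\alpha$, paired with $\beta$ and expanded via \eqref{eq:twistedquantumprod}, produces a sum of three-point correlators with insertions $\phi_i,\bL^\bc\alpha,\beta$ glued to the correlators defining $\bL^\bc$. The identification is the genus-zero topological recursion relation for descendant classes: on $\cX_{0,l+3,d}$, the class $\psi_1$ is equivalent modulo the virtual fundamental class to a sum over boundary strata in which the first marked point and the $\phi_i$-insertion lie on different components. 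Expanding $\alpha/(-z-\psi_1)=\sum_{k\ge 0}(-z)^{-k-1}\psi_1^k\alpha$ and applying TRR degree by degree, together with the compatibility of $\bc(\cV_{0,l,d})$ with the gluing of the universal family, matches the two expressions and yields $\Nabla^\bc_i\bL^\bc(\tau,z)\alpha=0$. This is the main technical step.

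For the unitarity \eqref{eq:unitarityofbL}, I would use the Frobenius property $(\phi_i\bullet^\bc_\tau X,Y)_{\rm orb}^{\bc}=(X,\phi_i\bullet^\bc_\tau Y)_{\rm orb}^{\bc}$, a direct consequence of the $S_3$-symmetry of the three-point correlators in \eqref{eq:twistedquantumprod}. Combining flatness at $\pm z$ with this symmetry gives
\[
\partial_i\bigl(\bL^\bc(\tau,-z)\alpha,\bL^\bc(\tau,z)\beta\bigr)_{\rm orb}^{\bc} = \tfrac{1}{z}\bigl[(\phi_i\bullet^\bc_\tau\bL^\bc(\tau,-z)\alpha,\bL^\bc(\tau,z)\beta)_{\rm orb}^{\bc} - (\bL^\bc(\tau,-z)\alpha,\phi_i\bullet^\bc_\tau\bL^\bc(\tau,z)\beta)_{\rm orb}^{\bc}\bigr] = 0,
\]
so the pairing is $\tau$-constant. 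Its constant term is $(\alpha,\beta)_{\rm orb}^{\bc}$ by the asymptotic normalisation at $\tau=Q=0$, and the identity extends to all $Q$ via the divisor equation in the $H^2(\cX)$-direction, which translates $\tau$-independence in divisor directions into $Q$-scaling independence. The main obstacle is the flatness step: one must match the TRR boundary decomposition with the geometric series expansion of $(-z-\psi_1)^{-1}$ and verify that the twisting class $\bc(\cV_{0,l,d})$ restricts compatibly to boundary strata; the asymptotic and unitarity statements are then essentially formal consequences.
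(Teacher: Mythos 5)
Your proof of the asymptotic normalisation and of flatness follows essentially the same path as the paper: both reduce flatness to the genus-zero topological recursion relation for the twisted invariants (the paper cites Pandharipande's untwisted argument and appeals to Tseng's orbifold QRR to see that the twisted theory still satisfies SE, DE, TRR with the twisted Poincar\'e pairing at the node; you describe the same boundary-stratum cancellation directly). The one thing worth stressing is the caveat hidden in ``compatibility of $\bc(\cV_{0,l,d})$ with the gluing'': when a node is split off, the pairing inserted at the node must be the \emph{twisted} Poincar\'e pairing, not the untwisted one, precisely because of how the twisting class behaves under gluing. The paper handles this by appealing to Givental's cone formalism and Tseng's QRR rather than checking the boundary restriction by hand; either route works, but yours requires a slightly more careful statement of the twisted TRR.

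For the unitarity \eqref{eq:unitarityofbL} you take a genuinely different route from the paper. The paper observes that $\bL^\bc(\tau,z)^\dagger\beta$ lies in a tangent space to Givental's twisted Lagrangian cone, uses the Lagrangian property to conclude that $(\bL^\bc(\tau,-z)^\dagger\alpha,\bL^\bc(\tau,z)^\dagger\beta)^{\bc}_{\rm orb}$ has only nonnegative $z$-powers, and pins it down by the $z\to\infty$ asymptotics. You instead differentiate the left-hand side in $\tau^i$, use flatness at $\pm z$ together with the Frobenius property of $\bullet^\bc_\tau$ to show the pairing is $\tau$-constant, and then kill the residual $Q$-dependence by the divisor equation. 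Your argument is more elementary and self-contained (it does not invoke the symplectic loop-space formalism at all), at the modest cost of the extra divisor-equation step needed to propagate $\tau$-constancy into $Q$-constancy; the paper's argument is shorter once Givental's formalism is available, and has the conceptual advantage of explaining \emph{why} unitarity holds (Lagrangianity of the cone). Both are correct.
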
 
\begin{proof} 
See \cite[Proposition 2.3]{Iritani:genmir} and \cite{Mann-Mignon}  
when $\cX$ is a smooth variety.  
In this proof, we will freely use the language of 
Givental's Lagrangian cone for which 
we refer the reader to \cite{Givental:symplectic, CCIT:tw}. 
From Tseng's orbifold Quantum Riemann-Roch (QRR) \cite{Tseng:QRR}, 
it follows that the twisted Gromov-Witten 
invariants \eqref{eq:twistedinv} satisfy 
the String Equation (SE), the Dilaton Equation 
(DE) and the Topological Recursion Relation (TRR) 
listed e.g.\ in \cite[Section 1]{Pandharipande}. 
(In the TRR, we need to use the twisted Poincar\'{e} pairing.) 
This is because these equations 
correspond to certain special geometric properties 
of Givental's Lagrangian cone (see \cite{Givental:symplectic})  
and the symplectic operator in 
Tseng's QRR preserves such properties.  
The differential equation for $\bL^\bc(\tau,z)$ has 
been proved for the untwisted theory for manifolds 
in \cite[Proposition 2]{Pandharipande} using TRR 
and the same proof applies to our case. 
It is easy to see that 
$\bL^\bc(\tau,z)^\dagger \beta$ is a tangent vector 
of Givental's Lagrangian cone for the twisted theory. 
Here $\bL^\bc(\tau,z)^\dagger$ denotes the adjoint 
of $\bL^\bc(\tau,z)$, i.e.\ 
$(\alpha, \bL^\bc(\tau,z)^\dagger\beta)_{\rm orb}^{\bc} 
= (\bL^\bc(\tau,z)\alpha, \beta)_{\rm orb}^{\bc}$. 
By the Lagrangian property of the cone, we know that 
$(\bL^\bc(\tau,-z)^\dagger \alpha, 
\bL^\bc(\tau,z)^\dagger \beta)_{\rm orb}^{\bc}$
contains only nonnegative powers in $z$. 
On the other hand $\bL^\bc(\tau,z)^\dagger \beta= \beta +O(z^{-1})$. 
Therefore we have 
$(\bL^\bc(\tau,-z)^\dagger \alpha, 
\bL^\bc(\tau,z)^\dagger \beta)_{\rm orb}^{\bc}
= (\alpha,\beta)_{\rm orb}^{\bc}$ and so 
$\bL^\bc(\tau,-z)^\dagger$ is inverse to $\bL^\bc(\tau,z)$. 
This proves \eqref{eq:unitarityofbL}. 
\end{proof} 
\begin{remark} 
The existence of a fundamental 
solution implies that the Dubrovin connection 
$\Nabla^{\bc}$ is flat, i.e.\ $[\Nabla^{\bc}_i,\Nabla^{\bc}_j]=0$. 
This in turn shows that the twisted 
quantum product $\bullet^\bc_\tau$ is associative. 
\end{remark} 
\begin{definition}[\cite{Givental:mirrorthm-toric, CCIT:tw}] 
We define the \emph{$J$-function} of the twisted theory 
by 
\begin{equation} 
\label{eq:twisted-bJ} 
\bJ^\bc(\tau,z) := \bL^\bc(\tau,z)^{-1} \unit 
= \bL^\bc(\tau,-z)^\dagger \unit.  
\end{equation} 
\end{definition}

\subsection{Equivariant Euler Twist} 
\label{subsec:equiv-eulertwist} 
We consider the case where $\bc$ is the 
$S^1$-equivariant Euler class $\be_\lambda$. 
Here $S^1$ acts on vector bundles by scaling the 
fibers and $\lambda\in H^2_{S^1}({\rm pt})$ 
denotes a generator. 
We have $\be_\lambda(\cE) = \sum_{i=0}^{r} 
\lambda^{i} c_{r - i}(\cE)$ for a rank $r$ vector 
bundle $\cE$.  
Then $\be_\lambda$ corresponds to the choice 
of parameters 
\[
s_0 = \log \lambda, \quad 
s_i = (-1)^{i-1} (i-1)! \lambda^{-i} \ (i\ge 1).  
\]
If $\cV_{0,l,d}$ is not represented by 
a vector bundle, the $\be_\lambda$-twisted invariants 
take values in $\C[\lambda,\lambda^{-1}]$. 
% Then the ground ring $\Lambda_\bs$ should be replaced with 
% the completion\footnote{One could write $\Lambda_\lambda
% = \C(\!(\lambda^{-1})\!)[\![\Eff_\cX]\!]$.} 
% $\Lambda_\lambda$ of $\C[\Eff_\cX][\lambda,\lambda^{-1}]$ 
% with respect to the additive valuation defined by 
% $v(Q^d) = \int_d \omega$ and $v(\lambda^{-1}) = 1$. 
% (Thus we allow power series in $\lambda$ infinite 
% in the \emph{negative} direction.) 
In this paper, we only consider 
the case where $\cV_{0,n,d}$ is 
a vector bundle and no negative powers of 
$\lambda$ appear. 
Then we can take the ground ring 
to be (instead of $\Lambda_\bs$) 
the completion $\Lambda_\lambda$
of $\C[\Eff][\lambda]$ with respect to 
the valuation $v(Q^d) = \int_d \omega$, $v(\lambda)=0$. 

We assume that $\cV$ is 
the sum $\cL_1 \oplus \cdots \oplus \cL_c$  
of line bundles such that $c_1(\cL_j)$ 
is nef and $\cL_j$ is a pull-back 
from the coarse moduli space $X$ for all $1\le j\le c$. 
% In this case the $(\be_\lambda,\cV)$-twisted invariants 
% are related to the Gromov-Witten invariants of a 
% complete intersection $\cY$ in $\cX$ with respect to 
% a regular section of $\cV$. 
% Let $H^2(\cX;\Z)$ denote the sheaf cohomology on the 
% topological stack $\cX$. This classifies 
% all the topological orbi-line bundles over $\cX$. 
% For $\xi\in H^2(\cX;\Z)$,  
% let $f_v(\xi)$ be the age of the corresponding 
% orbi-line bundle $\cL_\xi$ along the sector $\cX_v$. 
% This is the rational number in $[0,1)$ 
% such that the stabilizer along $\cX_v$ 
% acts on $\cL_\xi$ by $\exp(2\pi\iu f_v(\xi))$. 
% Let $\xi_i\in H^2(\cX;\Z)$ be the class of $\cL_i$.  
% Define the \emph{$\cV$-untwisted sector} 
% to be the following subspace of $H_{\rm orb}^*(\cX)$:
% \[
% H_\cV^{\rm untw} := \bigoplus_{v: f_v(\xi_1) = \cdots = f_v(\xi_c)=0} 
% H^*(\cX_v).  
% \]
Let $\cY$ be a quasi-smooth complete intersection in $\cX$ 
with respect to a regular section of $\cV$. 
Let $\iota \colon \cY \subset \cX$ denote the inclusion. 
The pull-back $\iota^* \colon H^*_{\rm orb}(\cX) 
\to H^*_{\rm orb}(\cY)$ and the push-forward 
$\iota_* \colon H^*_{\rm orb}(\cY) \to 
H^*_{\rm orb}(\cX)$ are defined by the 
inclusion $\cI\cY \subset \cI\cX$. 
% We write 
% $\bL^\lambda(\tau,z)$ and $\bJ^\lambda(\tau,z)$ 
% for the fundamental solution and the $J$-function 
% of the $(\be_\lambda,\cV)$-twisted theory of $\cX$.  
We also write $\bL_\cY(\tau,z)$, $\bJ_\cY(\tau,z)$ 
for the fundamental solution and the $J$-function 
of the \emph{untwisted} theory of $\cY$.  

\begin{proposition} 
\label{prop:Eulertw_Y} 
Under the above assumption, 
$\bL^{\be_\lambda}(\tau,z)$ and $\bJ^{\be_\lambda}(\tau,z)$ 
contain no negative powers in $\lambda$. 
So we can set $\bL^\be(\tau,z) := 
\bL^{\be_\lambda}(\tau,z)|_{\lambda =0}$, 
$\bJ^\be(\tau,z) := \bJ^{\be_\lambda}(\tau,z)|_{\lambda=0}$. 
Moreover, we have 
\[
\iota^* \bL^{\be}(\tau,z) \alpha 
= \bL_\cY(\iota^*\tau, z) \iota^* \alpha
\Bigr|_{H_2(Y;\Z) \to H_2(X;\Z)}. 
\]
% \[
% \lim_{\lambda \to 0} 
% (\bL^\lambda(\tau,z) \alpha, \beta)_{\rm orb}^{\bc} 
% = (\bL_\cY(\iota^*\tau,z) \iota^* \alpha, 
% \iota^* \beta)^{\cY}_{\rm orb} 
% \Bigr|_{H_2(Y;\Z) \to H_2(X;\Z)}.  
% \]
Here $\alpha,\beta\in H_{\rm orb}^*(\cX)$.  
The notation $H_2(Y;\Z) \to H_2(X;\Z)$ means 
to replace $Q^d$ with $Q^{\iota_*(d)}$ 
for $d\in H_2(Y;\Z)$.  
\end{proposition}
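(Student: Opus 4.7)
The plan is to combine a polynomiality argument in $\lambda$ with the Quantum Lefschetz principle at the level of virtual classes. For polynomiality, I would first note that under the hypothesis that $\cV=\cL_1\oplus\cdots\oplus\cL_c$ is a sum of line bundles pulled back from the coarse moduli space with $c_1(\cL_j)$ nef, the orbifold convexity vanishing $R^1\pi_*u^*\cV=0$ holds on every $\cX_{0,l,d}$, so $\cV_{0,l,d}$ is represented by a genuine vector bundle of rank $r(d)$. Consequently $\be_\lambda(\cV_{0,l,d})$, and hence every twisted correlator $\corr{\cdots}^{\be_\lambda}$, is a polynomial in $\lambda$ with only nonnegative powers. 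Since the twisted Poincar\'{e} pairing has the form $\lambda^r(\cdot,\cdot)_{\rm orb}+O(\lambda^{r-1})$ with $r=\rank\cV$ and nondegenerate leading term (the rank is constant on each twisted sector by the pull-back assumption), solving the defining recursion \eqref{eq:twistedfundsol} for $\bL^{\be_\lambda}\alpha$ in the Novikov filtration produces coefficients polynomial in $\lambda$; the same argument handles $\bJ^{\be_\lambda}$. This justifies setting $\bL^\be:=\bL^{\be_\lambda}|_{\lambda=0}$.

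Next, the key geometric input is the excess intersection formula: since $\cY$ is cut out by a regular section of the convex bundle $\cV$,
\[
\be(\cV_{0,l,d})\cap[\cX_{0,l,d}]^{\rm vir}=\sum_{d_\cY:\,\iota_*d_\cY=d}\iota_*[\cY_{0,l,d_\cY}]^{\rm vir}.
\]
Combined with $\iota^*\ev_i^*=\ev_i^*\iota^*$, this yields, for $\alpha_i\in H^*_{\rm orb}(\cX)$,
\[
\corr{\alpha_1\psi^{k_1},\ldots,\alpha_l\psi^{k_l}}^{\be,\cX}_{0,l,d}=\sum_{d_\cY:\,\iota_*d_\cY=d}\corr{\iota^*\alpha_1\psi^{k_1},\ldots,\iota^*\alpha_l\psi^{k_l}}^{\cY}_{0,l,d_\cY}.
\]
The pull-back assumption identifies $\cI\cV$ with $\cV|_{\cI\cX}$ and realises $\cI\cY=\iota^{-1}(\cI\cX)$ as the zero locus in $\cI\cX$ of a section of $\cI\cV$, so the projection formula gives $(\alpha,\beta)^\be_{\rm orb,\cX}=(\iota^*\alpha,\iota^*\beta)_{\rm orb,\cY}$. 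Substituting these two identities into \eqref{eq:twistedfundsol} at $\lambda=0$ produces
\[
(\iota^*\bL^\be(\tau,z)\alpha,\iota^*\beta)_{\rm orb,\cY}=(\bL_\cY(\iota^*\tau,z)\iota^*\alpha,\iota^*\beta)_{\rm orb,\cY}\Bigr|_{Q^{d_\cY}\to Q^{\iota_*d_\cY}}
\]
for all $\alpha,\beta\in H^*_{\rm orb}(\cX)$.

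The hardest step is upgrading this pairing identity---which a priori only determines both sides modulo the orthogonal complement $\ker\iota_*\subset H^*_{\rm orb}(\cY)$ of the ambient subspace $\iota^*H^*_{\rm orb}(\cX)$---to a genuine equality in $H^*_{\rm orb}(\cY)$. The left-hand side manifestly lies in the ambient subspace; to see the same for the right-hand side one must invoke the closure of the ambient part under the Dubrovin connection along ambient directions, i.e.\ that $\iota^*H^*_{\rm orb}(\cX)$ is preserved by quantum multiplication by ambient classes at ambient base points. I expect to derive this closure either from the same virtual class comparison applied inductively in the Novikov filtration or, more conceptually, from Tseng's orbifold Quantum Riemann-Roch specialized to the non-equivariant limit. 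Once the closure is in hand, the pairing identity above pins both sides down in the ambient subspace and yields the proposition.
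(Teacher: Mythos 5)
There are two genuine gaps, both traceable to the same missing ingredient: the paper's argument hinges on the modified bundle $\cV'_{0,l+2,d}$, defined by the exact sequence $0\to\cV'_{0,l+2,d}\to\cV_{0,l+2,d}\to\ev^*_{l+2}\cI\cV\to 0$, and your proof never introduces it.

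\textbf{Polynomiality in $\lambda$.} You assert that $\bc(\cV_{0,l,d})$ being polynomial in $\lambda$, plus "solving the defining recursion in the Novikov filtration," gives polynomiality of $\bL^{\be_\lambda}\alpha$. This is not automatic. Formula \eqref{eq:twistedfundsol} determines $\bL^{\be_\lambda}\alpha$ through the \emph{twisted} pairing, whose leading term is $\lambda^r(\cdot,\cdot)_{\rm orb}$; solving for $\bL^{\be_\lambda}\alpha$ therefore divides by $\lambda^r$, and since the correlator coefficient of $Q^d$ is a polynomial in $\lambda$ with nonzero constant term (namely $c_{\rm top}(\cV_{0,l+2,d})$), the naive recursion a priori produces $\lambda$-degrees going down to $-r$. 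The paper escapes this by the factorization $\be_\lambda(\cV_{0,l+2,d})=\be_\lambda(\cV'_{0,l+2,d})\cdot\ev_{l+2}^*\be_\lambda(\cI\cV)$: after the projection formula along $\ev_{l+2}$, the factor $\be_\lambda(\cI\cV)$ cancels the twisted pairing exactly, and what remains (with $\be_\lambda(\cV')$ only) has manifestly nonnegative $\lambda$-powers. Without this cancellation you have no control on the low $\lambda$-degree.

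\textbf{Upgrading the pairing identity.} You correctly identify that your virtual-functoriality argument only yields a pairing identity, i.e.\ the projections of the two sides onto the ambient part of $H^*_{\rm orb}(\cY)$ agree, and you flag the upgrade to a genuine equality in $H^*_{\rm orb}(\cY)$ as the hardest step. Your proposed fix — invoking closure of the ambient subspace under quantum multiplication — is circular: that closure is precisely Corollary \ref{cor:iota_quantumhom}, which is \emph{deduced} from this proposition. The paper sidesteps the problem entirely by never going through the pairing at all: using $\cV'$ it writes $\bL^{\be}\alpha$ as an explicit pushforward $\inv^*\ev_{l+2*}(\cdots\be(\cV'_{0,l+2,d})\cap[\cX_{0,l+2,d}]^{\rm vir})$, then computes the restriction $\iota^*$ of that class via the refined Gysin map $f^!$ along $\ov{\cI\cY}\hookrightarrow\ov{\cI\cX}$ and a chain of Gysin compatibilities (this is where Kim--Kresch--Pantev enters, so that part of your plan is in the right spirit). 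The output of that refined-Gysin computation is a cohomology class on $\cI\cY$, matching $\bL_\cY(\iota^*\tau,z)\iota^*\alpha$ on the nose, with no non-degeneracy or ambient-closure lemma needed. In short, the pairing-level comparison you set up cannot be closed without assuming (a version of) the conclusion; the class-level comparison via $\cV'$ is the way to do it.
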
 
\begin{proof}
% Let $\cX'_{0,l+2,d}$ be the union of components of 
% $\cX_{0,l+2,d}$ consisting of stable maps 
% $u$ such that for $2\le i\le l+1$, $\ev_i(u)$ belongs to $\cX_v$ 
% for some $v$ with $f_v(\xi_1) = \cdots = f_v(\xi_c)=0$. 
% Note that the twisted invariants contributing to 
% $\bL_\lambda(\tau,z)$, $\tau\in H^{\rm untw}_{\cV}$ 
% are integrals over $\cX'_{0,l+2,d}$. 
% Let $u\colon \cC \to \cX$ be a stable map in $\cX'_{0,l+2,d}$ 
% with marked points $x_1,\dots,x_{l+2}\in \cC$. 
% Let $v,w\in \sfT$ be such that $\ev_1(u) \in \cX_v$ 
% and $\ev_{l+2}(u) \in \cX_w$. 
% We claim that $H^1(\cC,u^*\cL_j)=0$ for all $1\le j\le c$ 
% and that the natural map 
% $H^0(\cC,u^*\cL_j) \to (u^*\cL_j)_{x_{l+2}}$ 
% is surjective if $f_w(\xi_j)=\age_{x_{l+2}}(u^*\cL_j)=0$. 
% Let $\pi \colon \cC \to \cC'$ be the map 
% forgetting the orbifold structure at $x_1,\dots,x_{l+2}$. 
% Then we have $\deg (\pi_* u^* \cL_i) = \pair{\xi_i}{d} 
% - f_v(\xi_i) - f_w(\xi_i)$. Since the orbifold 
% structure at each node of $\cC'$ is balanced, 
% we know that $\deg(\pi_*u^*\cL_i)$ is an integer. 
% Thus we have $\deg(\pi_*u^*\cL_i)\ge -1$. 
% Moreover $\deg(\pi_*u^*\cL_i)\ge 0$ if $f_w(\xi_i)=0$. 
The proof parallels the argument in \cite[Section 2.1]{Pandharipande}. 
By the assumption, 
for every stable map $u\colon \cC \to \cX$ in $\cX_{0,l+2,d}$, 
the convexity $H^1(\cC,u^*\cV) =0$ holds 
and the natural map $H^0(\cC, u^*\cV) \to (u^*\cV)_{x_{l+2}}$ 
is surjective. Here $x_{l+2}$ is the last marked point 
on $\cC$. Therefore $\cV_{0,l+2,d}$ is 
a vector bundle 
and we can define the subbundle $\cV'_{0,l+2,d}$ by 
the following exact sequence: 
\begin{equation} 
\label{eq:exactseq_V'}
\begin{CD}
0@>>> \cV'_{0,l+2,d} @>>> \cV_{0,l+2,d} 
@>>> \ev^*_{l+2} \cI\cV @>>>0.  
\end{CD} 
\end{equation} 
Here note that $\cI\cV$ defines a vector 
bundle on the rigidified inertia stack $\ov{\cI\cX}$ 
whose fiber at $(x,g)\in \ov{\cI\cX}$ is $\cV_x$.  
Using $\be_\lambda(\cV_{0,l+2,d}) = \be_\lambda(\cV'_{0,l+2,d}) 
\cup \ev_{l+2}^*\be_\lambda(\cI\cV)$, we find that 
$\bL^{\be_\lambda}(\tau,z) \alpha$ equals 
\begin{align*} 
\alpha + 
\sum_{\substack{(d,l)\neq (0,0)\\ 
d\in \Eff_\cX,\;l\ge 0}} \frac{Q^d}{l!} 
\inv^* {\ev_{l+2}}_* \left( 
\frac{\ev_1^* \alpha}{-z-\psi_1} 
\left(\prod_{j=2}^{l+1} \ev_j^*(\tau)\right) 
\be_\lambda(\cV'_{0,l+2,d})
\cap [\cX_{0,l+2,d}]^{\rm vir} \right). 
\end{align*} 
This shows that $\bL^{\be_\lambda}$ does not contain 
negative powers of $\lambda$. 
Since $\bL^{\be_\lambda}= \id + O(Q,\tau)$, 
$(\bL^{\be_\lambda})^{-1}$ and 
$\bJ^{\be_\lambda} = (\bL^{\be_\lambda})^{-1}\unit$ 
do not contain negative powers of $\lambda$ either. 
We denote by $\ev^\cX \colon \cX_{0,l+2,d} \to 
(\ov{\cI \cX})^{l+2}$ 
and $\ev^\cY \colon \cY_{0,l+2,d} 
\to (\ov{\cI\cY})^{l+2}$ 
the collection $(\ev_1,\dots, \ev_{l+2})$ of 
the evaluation maps. 
For the second statement, 
it suffices to show that 
\[
f^* \ev^\cX_* \left(
\psi_1^k \be(\cV_{0,l+2,d}') 
\cap [\cX_{0,l+2,d}]^{\rm vir} \right) 
= 
\sum_{d': \iota_*(d')=d} 
g_* \ev^\cY_* \left( 
\psi_1^k \cap [\cY_{0,l+2,d'}]^{\rm vir} \right) 
\]
where $f$ and $g$ are the inclusions: 
\[
\begin{CD}
(\ov{\cI \cY})^{l+1} \times \ov{\cI\cY} 
@>{g}>> (\ov{\cI \cX})^{l+1} \times \ov{\cI \cY} 
@>{f}>> (\ov{\cI \cX})^{l+1} \times \ov{\cI \cX}.  
\end{CD}
\] 
We consider the fiber diagram 
\[
\begin{CD} 
 \cZ @>{i}>> \cX_{0,l+2,d}  \\ 
@V{\ev^\cZ}VV  @VV{\ev^\cX}V \\
\ov{\cI\cX}^{l+1} \times \ov{\cI\cY} @>{f}>> (\ov{\cI\cX})^{l+2}  
\end{CD} 
\]
When $\cY$ is the zero locus of a regular section 
$s \in H^0(\cX,\cV)$, $\cZ$ is defined to be 
the zero locus of 
$\ev_{l+2}^*(s) \in H^0(\cX_{0,l+2,d},\ev_{l+2}^*\cI\cV)$. 
Using the \emph{refined Gysin map} $f^!$ 
in \cite{Fulton:intersection, Vistoli}, we have 
\[
f^* \ev_*^{\cX} \left(\psi_1^k \be(\cV_{0,l+2,d}') 
\cap[\cX_{0,l+2,d}]^{\rm vir} \right) 
= \ev_*^{\cZ} f^! \left(\psi_1^k \be(\cV_{0,l+2,d}') 
\cap[\cX_{0,l+2,d}]^{\rm vir} \right). 
\]
Let $j\colon \cY_{0,l+2,d} \to \cZ$ be the 
inclusion. It now suffices to show the equality 
of classes on $\cZ$: 
\[
\sum_{d':\iota_*(d') = d} 
j_*\left( 
\psi_1^k \cap [\cY_{0,l+2,d'}]^{\rm vir}\right) 
= f^! \left(
\psi_1^k \be(\cV'_{0,l+2,d}) \cap 
[\cX_{0,l+2,d}]^{\rm vir} \right). 
\]
Note that we only need to consider the case $k=0$ 
since $\psi_1^k$ factors out. 
By the functoriality \cite{Kim-Kresch-Pantev} 
of virtual classes we have 
\[
\sum_{d':\iota_*(d') = d} 
[\cY_{0,l+2,d'}]^{\rm vir}
= 0_\cX^![\cX_{0,l+2,d}]^{\rm vir}
\]
where $0_\cX\colon \cX_{0,l+2,d} \to \cV_{0,l+2,d}$ 
is the zero section (which is the bottom row 
of the diagram below). 
We can make the following fiber diagram: 
% \xymatrix{
% \cY_{0,l+2,d} \ar[d]_j \ar[r]^j 
% & \cZ \ar[d]_{\tilde{s}_\cZ} \ar[r]  
% & \cX_{0,l+2,d} \ar[dd]^{\tilde{s}} \\ 
% \cZ \ar[d] \ar[r]^{0_\cZ}  
% & \cV'_{0,l+2,d}|_\cZ \ar[d]^{h|_{\cZ}} \\
% \cX_{0,l+2,d} \ar[r]^{0_\cX'} 
% & \cV'_{0,l+2,d} \ar[r]^{h} & \cV_{0,l+2,d} }
\[
\begin{CD} 
\cY_{0,l+2,d} @>{j}>> \cZ @>{i}>> \cX_{0,l+2,d} \\ 
@V{j}VV  @V{\tilde{s}_\cZ}VV  @| \\
\cZ @>{0_\cZ}>> \cV'_{0,l+2,d}|_\cZ @. \cX_{0,l+2,d} \\
@V{i}VV @V{h|_\cZ}VV @V{\tilde{s}}VV \\
\cX_{0,l+2,d} @>{0_\cX'}>> \cV'_{0,l+2,d} @>{h}>> \cV_{0,l+2,d} 
\end{CD}
\]
where $\tilde{s}$ and $\tilde{s}_\cZ$ are 
the sections of $\cV_{0,l+2,d}$ and 
$\cV'_{0,l+2,d}|_\cZ$ induced from $s\in H^0(\cX,\cV)$, 
$0_\cX'$ and $0_\cZ$ are the zero sections 
and $h$ is the natural inclusion. 
We have $0_\cX = h\circ 0_\cX'$. 
Using the properties of the Gysin maps, we have 
\begin{align*} 
j_* 0_\cX^{!} [\cX_{0,l+2,d}]^{\rm vir} 
& = j_* 0'_\cX\!\!{}^{!}\, h^! [\cX_{0,l+2,d}]^{\rm vir} 
= 0_\cZ^* (\tilde{s}_{\cZ})_* h^! [\cX_{0,l+2,d}]^{\rm vir} \\  
& = \be(\cV_{0,l+2,d}') h^! [\cX_{0,l+2,d}]^{\rm vir} 
= f^!\left(
\be(\cV_{0,l+2,d}') \cap [\cX_{0,l+2,d}]^{\rm vir}\right).  
\end{align*} 
In the last step we used the exact sequence 
\eqref{eq:exactseq_V'}. The conclusion follows. 
\end{proof} 

Using 
$\phi_i \bullet_\tau^{\be_\lambda}
= -(z\partial_{\tau^i} \bL^{\be_\lambda}(\tau,z)) 
(\bL^{\be_\lambda}(\tau,z))^{-1}$, we obtain the 
following corollary. 
\begin{corollary}  
\label{cor:iota_quantumhom} 
Under the same assumption, 
the equivariant Euler twisted quantum 
product $\bullet_\tau^{\be_\lambda}$ 
has the non-equivariant limit $\bullet_\tau^\be$ 
and we have 
\[
\iota^*(\alpha \bullet_\tau^\be \beta) 
= (\iota^*\alpha) \bullet_{\iota^*\tau} (\iota^* \beta)
\Bigr|_{H_2(Y;\Z) \to H_2(\cX;\Z)} 
\] 
where $\alpha,\beta \in H^*_{\rm orb}(\cX)$ and 
$\bullet_{\iota^*\tau}$ in the right-hand side 
denotes the untwisted quantum product of $\cY$.   
\end{corollary}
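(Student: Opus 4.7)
The plan is to derive everything from the operator identity
\[
\phi_i \bullet^{\be_\lambda}_\tau = -z\bigl(\partial_{\tau^i}\bL^{\be_\lambda}(\tau,z)\bigr)\bL^{\be_\lambda}(\tau,z)^{-1},
\]
obtained by rewriting the flatness equation $\Nabla^{\be_\lambda}_i \bL^{\be_\lambda}\alpha = 0$ (applied to an arbitrary constant $\alpha$). This reduces both claims of the corollary to properties of $\bL^{\be_\lambda}$ already established in Proposition \ref{prop:Eulertw_Y}.

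For the non-equivariant limit, I would observe that $\bL^{\be_\lambda} = \id + O(Q,\tau)$ contains no negative powers of $\lambda$ by Proposition \ref{prop:Eulertw_Y}. Its inverse, given by the geometric series $\sum_{k\ge 0}(-1)^k(\bL^{\be_\lambda}-\id)^k$, and the derivative $\partial_{\tau^i}\bL^{\be_\lambda}$ therefore also contain no negative powers of $\lambda$. Hence the right-hand side of the identity above is regular at $\lambda = 0$, and we may set $\bullet^\be_\tau := \bullet^{\be_\lambda}_\tau\bigr|_{\lambda=0}$.

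For the compatibility with $\iota^*$, I would fix $\alpha \in H^*_{\rm orb}(\cX)$ and consider the flat section $s(\tau,z) := \bL^\be(\tau,z)\alpha$. On one hand, flatness of $s$ gives
\[
\iota^*\bigl(\partial_{\tau^i}s\bigr) = -z^{-1}\iota^*(\phi_i \bullet^\be_\tau s).
\]
On the other hand, by Proposition \ref{prop:Eulertw_Y}, $\iota^*s = \bL_\cY(\iota^*\tau,z)\iota^*\alpha\bigr|_{H_2(Y)\to H_2(X)}$, so the chain rule with $\partial_{\tau^i}(\iota^*\tau) = \iota^*\phi_i$, combined with flatness for $\Nabla^\cY$ in the direction $\iota^*\phi_i$, shows that the same quantity equals $-z^{-1}(\iota^*\phi_i)\bullet_{\iota^*\tau}(\iota^*s)\bigr|_{H_2(Y)\to H_2(X)}$. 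Equating and canceling $-z^{-1}$, and using that $\bL^\be$ is a formal automorphism (it equals $\id$ modulo $(Q,\tau)$) so that as $\alpha$ varies the section $s = \bL^\be \alpha$ covers any prescribed $\beta\in H^*_{\rm orb}(\cX)$ in the appropriate completion, I obtain
\[
\iota^*(\phi_i \bullet^\be_\tau \beta) = (\iota^*\phi_i)\bullet_{\iota^*\tau}(\iota^*\beta)\bigr|_{H_2(Y)\to H_2(X)}
\]
for every $\beta$. Extending by linearity in the left factor (expanding a general $\alpha$ over the basis $\{\phi_i\}$) yields the stated equality for arbitrary $\alpha,\beta$.

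The main technical nuisance will be careful bookkeeping of the Novikov change of variables $Q^{d'}\mapsto Q^{\iota_*(d')}$ on both sides, so that operators living a priori on different Novikov rings can be compared; everything else reduces to the chain rule and Proposition \ref{prop:Eulertw_Y}.
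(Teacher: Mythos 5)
Your proof is correct and unpacks exactly the one-line argument the paper gives: derive the multiplication operator as $\phi_i\bullet^{\be_\lambda}_\tau = -(z\partial_{\tau^i}\bL^{\be_\lambda})(\bL^{\be_\lambda})^{-1}$ from flatness, then read off both the $\lambda$-regularity and the $\iota^*$-compatibility from Proposition \ref{prop:Eulertw_Y}. The only place to be a bit more careful than you were is the passage from flat sections $s=\bL^\be\alpha$ (with $\alpha$ constant) to an arbitrary $\beta$: the cleanest phrasing is that $\phi_i\bullet^\be_\tau$ is linear over the coefficient ring and, at each fixed $(\tau,z)$, the flat sections span the fiber over that ring because $\bL^\be=\id+O(Q,\tau)$ is invertible; so an identity verified on flat sections extends by linearity to all constant $\beta$. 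This is what you gesture at and it does go through, so your argument matches the paper's intended proof in substance and in route.
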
 

\subsection{The Specialization at $Q=1$} 
The divisor equation (\cite[Theorem 8.3.1]{AGV}) 
shows that the Novikov parameter $Q$ is actually 
redundant in the product $\bullet^\bc_\tau$ 
\eqref{eq:twistedquantumprod}.  
Writing 
\begin{equation} 
\label{eq:tau_decomp}
\tau = \tau_{0,2} + \tau', \quad 
\tau_{0,2} \in H^2(\cX_0), \quad 
\tau' \in \bigoplus_{p\neq 2}H^p(\cX) 
\oplus \bigoplus_{v\in \sfT'} H^*(\cX_v), 
\end{equation} 
we have 
\[
(\alpha\bullet^\bc_\tau \beta, \gamma)_{\rm orb}^{\bc} 
= \sum_{l\ge 0} \sum_{d\in \Eff_\cX} 
\corr{\alpha,\beta,\gamma,
\tau',\dots,\tau'}_{0,l+3,d}^{\bc}  
\frac{e^{\pair{\tau_{0,2}}{d}}Q^d}{l!}. 
\]
Therefore the parameter $Q$ plays the same 
role as $e^{\tau_{0,2}}$. 
We define 
\[
\circ^\bc_\tau := \bullet^\bc_\tau|_{Q=1}. 
\]
The new product $\circ_\tau^\bc$ is a formal power 
series in $\tau'$ and a formal \emph{Fourier} 
series in $\tau_{0,2}$. 
Similarly, by the divisor equation, 
the fundamental solution \eqref{eq:twistedfundsol} 
can be specialized to $Q=1$. 
Writing $L^\bc(\tau,z) := \bL^\bc(\tau,z)|_{Q=1}$, 
we have  
\begin{equation}
\label{eq:fundsol} 
(L^\bc(\tau,z) \alpha,\beta)_{\rm orb}^{\bc} 
= (e^{-\tau_{0,2}/z} \alpha, \beta)_{\rm orb}^{\bc}  
+ \sum_{\substack{(d,l)\neq (0,0) \\ d\in \Eff_\cX, l\ge 0}} 
\corr{\frac{e^{-\tau_{0,2}/z} \alpha}{-z-\psi}, 
\tau',\dots,\tau',\beta}_{0,l+2,d}^{\bc}  
\frac{e^{\pair{\tau_{0,2}}{d}}}{l!}.  
\end{equation} 
Here the action of $\tau_{0,2}$ on $H^*_{\rm orb}(\cX)$ 
is defined by 
$\tau_{0,2} \cdot \alpha = \pr^*(\tau_{0,2}) \cup \alpha$
where $\pr \colon \cI \cX \to \cX$ is the natural projection. 
The classical limit $Q=\tau=0$ corresponds, 
after the specialization $Q=1$, to the limit 
$\tau'= 0$ and $e^{\pair{\tau_{0,2}}{d}} \to 0$ 
for all nonzero $d\in \Eff_\cX$. 
This is called the \emph{large radius limit}. 

%We will work with 
%$\bullet_\tau$ when we regard the quantum 
%cohomology as a formal object 
%and with $\circ_\tau$ when we want to 
%regard it as an analytic object. 

\section{$\hGamma$-Integral Structure in Quantum Cohomology} 
\label{sec:Gammaint}
In this section we review the quantum $D$-module 
for stacks and its $\hGamma$-integral structure following 
\cite{Iritani:int}. See also \cite{Iritani:RIMS} 
for a review. 
\subsection{Untwisted Quantum $D$-Module with $Q=1$}
\label{subsec:untwQDM}
We denote by $\circ_\tau:=\circ_\tau^{\bc}|_{\bs=0}$ 
the quantum product of the untwisted theory of $\cX$ 
specialized to $Q=1$. 
In all the examples we treat in our paper,  
it turns out a posteriori that 
the quantum product $\circ_\tau$ is convergent in $\tau$. 
So henceforth we 
assume that $\circ_\tau$ is convergent over the 
region $U\subset H_{\rm orb}^*(\cX)$ 
containing the set 
\[
\left\{ \tau \in H^*_{\rm orb}(\cX) \;\big|\; 
\|\tau'\| \le e^{-M}, 
\ 
\Re(\pair{\tau_{0,2}}{d}) \le -M \   
\forall d \in \Eff_\cX\setminus \{0\} 
\right\}
\]
for some $M>0$. Here $\|\cdot \|$ is 
a certain norm on $H_{\rm orb}^*(\cX)$ 
and we used the decomposition \eqref{eq:tau_decomp}.  
The region $U$ is considered as a neighborhood 
of the large radius limit point. 

Let $(\tau,z)$ denote a general point on 
$U\times \C$ and $(-)\colon U\times \C \to U\times \C$ 
be the map sending $(\tau,z)$ to $(\tau,-z)$. 
In the untwisted theory we can extend the Dubrovin 
connection in the $z$-direction. 

\begin{definition}[{\cite[Definition 2.2]{Iritani:int}}]
\label{def:QDM} 
The \emph{quantum $D$-module} $QDM(\cX)$ 
is the triple $(F, \nabla, (\cdot,\cdot)_F)$ 
consisting of the trivial holomorphic vector 
bundle $F:= H^*_{\rm orb}(\cX) \times 
(U\times \C) \to (U\times \C)$, 
the meromorphic flat connection of $F$ 
\begin{equation*} 
\label{eq:Dubrovinconn} 
\nabla := d + \frac{1}{z} \sum_{i=1}^N 
(\phi_i \circ_\tau) d\tau^i + 
\left(- \frac{1}{z} (E\circ_\tau) + \frac{\deg}{2} \right) 
\frac{dz}{z}   
\end{equation*} 
and the pairing 
$(\cdot,\cdot)_F \colon (-)^* \cO(F) \otimes \cO(F) 
\to z^n \cO_{U\times \C}$  
defined by 
\[
(\alpha,\beta)_F := (2\pi\iu z)^n (\alpha,\beta)_{\rm orb} 
\qquad \text{for }\alpha \in F_{(\tau,-z)}, \ \beta \in F_{(\tau,z)}. 
\]
% induced by the orbifold Poincar\'{e} pairing 
% $F_{(\tau,-z)} \times F_{(\tau,z)}  
% = H^*_{\rm orb}(\cX) \times H^*_{\rm orb}(\cX) 
% \to \C$. 
Here $E \in \cO(F)$ is the \emph{Euler vector field}  
\[
E := c_1(T\cX) + \sum_{i} \left(1- \frac{1}{2} \deg \phi_i\right) 
\tau^i \phi_i 
\]
and $\deg$ denotes the degree as a class in 
$H_{\rm orb}^*(\cX)$. (In the definition of $\nabla$, 
$\frac{\deg}{2}$ should be understood as an element 
of $\End(H_{\rm orb}^*(\cX))$.)  
The connection $\nabla$ is called 
the (extended) \emph{Dubrovin connection}. 
It has poles of order $\le 2$ along $z=0$.  
The pairing $(\cdot,\cdot)_F$ 
is flat with respect to $\nabla$. 
When we refer to $QDM(\cX)$ as a $D$-module, 
we consider the action of the ring $\cO_U[z]\langle
z\partial_1,\dots,z \partial_N \rangle$ 
of differential operators on $\cO(F)$ given 
by $z\partial_i \mapsto z \nabla_i$. 
\end{definition}
\begin{remark} 
\label{rem:change} 
We work with the different conventions for $\nabla$ and $(\cdot,\cdot)_F$ 
from \cite{Iritani:int} to get a better match 
with the B-side. For the 
flat connection $\nabla^{\rm old}$ and the pairing 
$(\cdot,\cdot)^{\rm old}_F$ in \cite{Iritani:int}, 
we have $\nabla = \nabla^{\rm old} + \frac{n}{2} \frac{dz}{z}$ 
and $(\cdot,\cdot)_F = (2\pi\iu z)^n (\cdot,\cdot)^{\rm old}_F$, 
where $n= \dim_\C \cX$. 
In what follows, we will translate the contents 
in \cite{Iritani:int} in this new convention, 
but we will not remark the difference every time. 
\end{remark} 
\begin{remark}
\label{rem:gen_Hodge} 
The quantum $D$-module can be considered 
as a variation of \emph{generalized} Hodge structure. 
Generalizations of Hodge structure have been studied 
by many people and referred to in various ways:   
\emph{semi-infinite Hodge structure} \cite{Barannikov:qpI, 
Iritani:int},  
\emph{TERP structure} \cite{Hertling:tt*} 
and \emph{non-commutative Hodge structure} \cite{KKP} etc. 
\end{remark} 

The quantum $D$-module has a certain symmetry 
which we called the \emph{Galois action} 
in \cite{Iritani:int}. This comes from the 
divisor equation and the monodromy constraints   
for orbifold stable maps. 
Let $H^2(\cX;\Z)$ denote the sheaf cohomology on the 
topological stack $\cX$ which classifies topological 
orbifold line bundles. For $\xi\in H^2(\cX;\Z)$, 
let $\cL_\xi$ be the corresponding orbifold line bundle,  
$\xi_0 \in H^2(\cX;\Q)$ denote the image of $\xi$ 
and $f_v(\xi) \in [0,1)\cap \Q$ be the rational number 
such that the stabilizer along $\cX_v$ acts on 
fibers of $\cL_\xi$ by $\exp(2\pi\iu f_v(\xi))$. 
(The number $f_v(\xi)$ is called the 
\emph{age} of $\cL_\xi$ along $\cX_v$.) 
Define the map $G(\xi) \colon 
H^*_{\rm orb}(\cX) \to H^*_{\rm orb}(\cX)$ by 
\begin{equation} 
\label{eq:Galois} 
G(\xi) ( \tau_0 \oplus \bigoplus_{v\in \sfT'} \tau_v ) 
= (\tau_0 - 2\pi\iu \xi_0) \oplus 
\bigoplus_{v\in\sfT'} e^{2\pi\iu f_v(\xi)} \tau_v  
\end{equation} 
where $\tau_v \in H^*(\cX_v)$. 
Consider the following bundle isomorphism of $F$ 
\begin{align}
\label{eq:Galois_bundleisom}
\begin{split}  
G^F(\xi) \colon 
H_{\rm orb}(\cX) \times (U\times \C) & 
\longrightarrow 
H_{\rm orb}(\cX) \times (U\times \C), \\ 
(\alpha, (\tau, z)) 
& \longmapsto (dG(\xi) \alpha, (G(\xi) \tau, z))  
\end{split} 
\end{align} 
where $dG(\xi) \in \End(H_{\rm orb}^*(\cX))$ 
is  the differential of $G(\xi)$. 
\begin{proposition}[{\cite[Proposition 2.3]{Iritani:int}}] 
The bundle isomorphism $G^F(\xi)$ preserves the connection 
$\nabla$ and the pairing $(\cdot,\cdot)_F$. 
This defines the $H^2(\cX,\Z)$-action on $QDM(\cX)$
and $QDM(\cX)$ descends\footnote{We can assume that 
$U$ is invariant under the action of $H^2(\cX;\Z)$. } to the quotient 
$(U/H^2(\cX,\Z))\times \C$.  
\end{proposition}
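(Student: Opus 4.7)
The plan is to split the preservation claim into a few algebraic compatibilities and then invoke a single nontrivial ingredient—the integrality of $\pair{\xi_0}{d}$ modulo ages—to establish the key one. Since $dG(\xi)$ is a constant endomorphism of $H^*_{\rm orb}(\cX)$ (independent of $\tau$ and $z$), invariance of $\nabla$ under $G^F(\xi)$ reduces to: (i) equivariance of the quantum product at $Q=1$, namely $dG(\xi)(\alpha\circ_\tau\beta)=(dG(\xi)\alpha)\circ_{G(\xi)\tau}(dG(\xi)\beta)$; (ii) equivariance of the Euler field, $dG(\xi)(E|_\tau)=E|_{G(\xi)\tau}$; and (iii) commutation of the grading operator $\tfrac{\deg}{2}$ with $dG(\xi)$. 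Invariance of $(\cdot,\cdot)_F$ reduces to $dG(\xi)$-invariance of the orbifold Poincar\'e pairing.

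The main step is (i). Expanding both sides via \eqref{eq:twistedquantumprod} at $\bc=1$ and using the $Q=1$ specialization together with the decomposition \eqref{eq:tau_decomp}, the right-hand side carries, relative to the left-hand side, the factor
\[
e^{-2\pi\iu\pair{\xi_0}{d}}\cdot\prod_{i}e^{2\pi\iu f_{v_i}(\xi)}
\]
for each genus-zero correlator of curve class $d$ with marked points landing in twisted sectors labeled by $v_i\in\sfT$. The crucial input is the identity
\[
\pair{\xi_0}{d}\equiv\sum_{i}f_{v_i}(\xi)\pmod{\Z},
\]
which holds because $u^*\cL_\xi$ along the universal stable map $u\colon\cC\to\cX$ has orbifold degree $\pair{\xi_0}{d}$ on $\cC$ and descends to a genuine line bundle on the coarse domain curve only up to the correction by the sum of ages $f_{v_i}(\xi)$ at the marked points; the coarse-curve degree must be an integer. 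Consequently the above factor equals $1$, and term-by-term equality of the correlators yields (i).

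For the remaining compatibilities: the orbifold pairing between $\cX_v$ and $\cX_{v^{-1}}$ picks up $e^{2\pi\iu(f_v(\xi)+f_{v^{-1}}(\xi))}=1$ since $f_{v^{-1}}(\xi)\equiv -f_v(\xi)\pmod{\Z}$. For $E$, the translation $\tau_{0,2}\mapsto\tau_{0,2}-2\pi\iu\xi_0$ affects only the $H^2(\cX_0)$-summand, where the coefficient $1-\tfrac{\deg\phi_i}{2}$ vanishes; on each twisted sector $\cX_v$ with $v\ne 0$, the scalar $e^{2\pi\iu f_v(\xi)}$ acts identically on the coordinate $\tau^i$ and on the basis vector $\phi_i$, so the product $\tau^i\phi_i$ transforms equivariantly, and $c_1(T\cX)\in H^2(\cX_0)$ is fixed by $dG(\xi)$. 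The grading operator is scalar on each graded piece of $H^*_{\rm orb}(\cX)$, and $dG(\xi)$ is too, so they commute. Additivity $G^F(\xi_1+\xi_2)=G^F(\xi_1)\circ G^F(\xi_2)$ is immediate from the definition of $G(\xi)$, giving an honest $H^2(\cX;\Z)$-action on $QDM(\cX)$; the invariance of $(\nabla,(\cdot,\cdot)_F)$ then yields descent to $(U/H^2(\cX;\Z))\times\C$.

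The main obstacle is the integrality identity $\pair{\xi_0}{d}\equiv\sum_i f_{v_i}(\xi)\pmod{\Z}$—this is the only place where the orbifold structure enters substantively, and all other compatibilities are straightforward once it is in hand. The identity itself rests on the fact that the descent of $u^*\cL_\xi$ to the coarse domain curve has integer degree, combined with the standard formula relating orbifold and coarse degrees on a smooth orbicurve with marked points of prescribed isotropies.
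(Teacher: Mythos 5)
Your proof is correct and follows essentially the same route as the original argument in \cite{Iritani:int}, namely reducing everything to the divisor equation (encoded here in the $Q=1$ specialization, which converts the degree factor $Q^d$ into $e^{\pair{\tau_{0,2}}{d}}$) together with the monodromy constraint $\pair{\xi_0}{d}\equiv\sum_i f_{v_i}(\xi)\pmod\Z$ for twisted stable maps — exactly the two ingredients the paper itself flags as the source of the Galois symmetry. The auxiliary checks (Euler field equivariance via the vanishing of $1-\tfrac12\deg$ on $H^2(\cX_0)$, the age identity $f_{v^{-1}}(\xi)\equiv -f_v(\xi)\pmod\Z$ for the pairing, and the fact that $dG(\xi)$ is a scalar on each $H^*(\cX_v)$ so it commutes with the grading) are all correctly identified and handled.
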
 

% Because the Dubrovin connection is flat, we have a 
% basis of solutions to the differential equation $\nabla s =0$. 
% This is called a fundamental solution. 
% We have a standard fundamental solution given by 
% descendant Gromov-Witten invariants. 
% For $\tau_0 \in H^*(\cX_0)$ and $\alpha \in H^*_{\rm orb}(\cX)$, 
% we define the action of $\tau_0$ by 
% $\tau_0 \cdot \alpha := \pr^*(\tau_0) \cup \alpha$,  
% where $\pr \colon \cI\cX \to \cX$ is the natural projection. 
% Let $\{\phi^k\}$ be the basis 
% dual to $\{\phi_i\}$ with respect to 
% $(\cdot,\cdot)_{\rm orb}$, i.e.\  
% $(\phi^k, \phi_i)_{\rm orb}= \delta^k_i$. 
% Define an $\End(H_{\rm orb}^*(\cX))$-valued 
% function $L(\tau,z)$ on $U\times \C^\times$ by 
% \begin{equation}
% \label{eq:fundsol} 
% L(\tau,z) \phi_i = e^{-\tau_{0,2}/z} \phi_i 
% - \sum_{\substack{(d,l)\neq (0,0) \\ d\in \Eff_\cX, 
% 1\le k\le N}} 
% \phi^k \corr{\phi_k, \tau',\dots,\tau',
% \frac{e^{-\tau_{0,2}/z} \phi_i}{z+\psi}}_{0,l+2,d}^\cX 
% e^{\pair{\tau_{0,2}}{d}}   
% \end{equation} 
% where $\tau = \tau_{0,2} + \tau'$ is the decomposition in 
% \eqref{eq:tau_decomp} and $1/(z+\psi)$ in the correlator 
% should be understood as the series 
% $\sum_{k=0}^\infty (-1)^k z^{-k-1} \psi^k$. 

The solution to the extended quantum 
differential equation $\nabla s=0$ is given by 
the fundamental solution $L(\tau,z) := L^{\bc}(\tau,z)|_{\bs =0}$ 
in \eqref{eq:fundsol} 
multiplied by $z^{-\frac{\deg}{2}}z^\rho$. 
\begin{proposition}[{\cite[Proposition 2.4]{Iritani:int}}]
\label{prop:fundsol}  
Set $\rho := c_1(T\cX)$ and define 
\[
z^{-\frac{\deg}{2}} z^\rho := \exp\left(- \frac{\deg}{2} \log z\right) 
\exp(\rho \log z).  
\]
Then $s_i(\tau,z) = 
L(\tau,z) z^{-\frac{\deg}{2}} z^\rho \phi_i$, $i=1,\dots,N$ form 
a basis of (multi-valued) $\nabla$-flat sections. 
Each $s_i$ is characterized by the asymptotic 
initial condition 
$s_i(\tau,z) \sim z^{-\frac{\deg}{2}} z^{\rho} e^{-\tau_{0,2}} \phi_i$ 
in the large radius limit. 
\end{proposition}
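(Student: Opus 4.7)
The plan is to verify the $\tau$-direction and $z$-direction flatness of $s_i(\tau,z) = L(\tau,z)\,z^{-\deg/2}\,z^\rho\,\phi_i$ separately, and then to pin down the sections by their asymptotic behavior. Flatness in the $\tau$-directions is automatic: the operator $z^{-\deg/2}z^\rho$ is independent of $\tau$, so $\nabla_k s_i = (\nabla_k L)\,z^{-\deg/2}z^\rho\phi_i = 0$ by the previous proposition specialized to $\bc=1$, $Q=1$. The basis property is equally straightforward, because $L(\tau,z)=\id+O(Q,\tau)$ and $z^{-\deg/2}z^\rho$ are invertible operators and $\{\phi_i\}$ is a basis; the multi-valuedness of $s_i$ enters only through the $\log z$ in $z^{-\deg/2}z^\rho$.

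The main content is the $z$-direction flatness. Using $[\deg,\rho]=2\rho$ one computes
\begin{equation*}
z\partial_z\bigl(z^{-\deg/2}z^\rho\phi_i\bigr) = \bigl(-\tfrac{\deg}{2} + z^{-1}\rho\bigr)\,z^{-\deg/2}z^\rho\phi_i,
\end{equation*}
so $\nabla_{z\partial_z} s_i = 0$ reduces to the operator identity
\begin{equation*}
\Bigl(z\partial_z + \tfrac{\deg}{2} - \tfrac{1}{z}(E\circ_\tau)\Bigr) L(\tau,z) \;=\; L(\tau,z)\Bigl(\tfrac{\deg}{2} - \tfrac{\rho}{z}\Bigr).
\end{equation*}
I would establish this homogeneity equation by applying the Euler vector field $E = c_1(T\cX) + \sum_i(1-\tfrac{1}{2}\deg\phi_i)\tau^i\phi_i$ to the series \eqref{eq:fundsol} defining $L$: the virtual dimension $n + l - 1 + \pair{c_1(T\cX)}{d} - \sum_j \age(v_j)$ of $[\cX_{0,l+2,d}]^{\rm vir}$, together with the divisor and string equations, accounts term-by-term for the four pieces of the identity above, provided the action of $\deg$ on twisted sectors (including the age shift) is handled consistently. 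This is the orbifold analogue of the Euler-field identity for $L$ known in the smooth case, e.g.\ from \cite[\S2]{Pandharipande}.

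To characterize $s_i$ by its asymptotic, note that as $\tau'\to 0$ and $e^{\pair{\tau_{0,2}}{d}}\to 0$ for every nonzero $d\in\Eff_\cX$, the series \eqref{eq:fundsol} collapses to $L(\tau,z)\alpha\to e^{-\tau_{0,2}/z}\alpha$. The commutation $[\deg,\tau_{0,2}]=2\tau_{0,2}$ gives $e^{-\tau_{0,2}/z}z^{-\deg/2} = z^{-\deg/2}e^{-\tau_{0,2}}$, and since $\tau_{0,2}$ and $\rho$ commute under cup product, $z^\rho$ commutes with $e^{-\tau_{0,2}}$. Hence $s_i \sim z^{-\deg/2}z^\rho e^{-\tau_{0,2}}\phi_i$ in the large-radius limit, and uniqueness follows from the regular-singular nature of $\nabla$ at the large-radius point in the gauge provided by $L(\tau,z)$. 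The main obstacle is the bookkeeping required for the homogeneity equation, since the age shifts on twisted sectors appear both in the virtual dimension formula and in the action of $\deg$, and these contributions must cancel exactly for each correlator.
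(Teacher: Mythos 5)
Your argument is correct and is essentially the standard (and the cited reference's) route: $\tau$-flatness and invertibility are immediate, the $z$-direction flatness reduces to the Euler homogeneity identity $\bigl(z\partial_z + \tfrac{\deg}{2} - \tfrac{1}{z}E\circ_\tau\bigr)L = L\bigl(\tfrac{\deg}{2} - \tfrac{\rho}{z}\bigr)$ that you correctly write down (derived from the virtual-dimension constraint at fixed Novikov variable and the divisor equation to pass to $Q=1$, with the $L\rho/z$ term produced by $\rho\partial$ acting on the $e^{-\tau_{0,2}/z}$ prefactor), and the asymptotic is obtained by the commutation $e^{-\tau_{0,2}/z}z^{-\deg/2}=z^{-\deg/2}e^{-\tau_{0,2}}$, which you verify. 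The age-shift bookkeeping you flag is genuine but does cancel; your sketch identifies all the right ingredients.
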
 
Note that $L(\tau,z)$ is convergent on $U\times \C^*$ 
so far as the quantum product $\circ_\tau$ is analytic 
on $U$ since it is a solution to the quantum 
differential equation.

% \begin{remark} 
% The section $L(\tau,z) \phi_i$ is $\nabla$-flat 
% in the $\tau$-direction but not in the $z$-direction. 
% We have the following property for the pairing 
% \cite[Proposition 2.4]{Iritani:int} 
% \begin{equation} 
% \label{eq:unitarityofL} 
% (L(\tau,-z) \phi_i, L(\tau,z) \phi_j)_{\rm orb} = 
% (\phi_i,\phi_j)_{\rm orb}. 
% \end{equation} 
% The convergence of $L(\tau,z)$ follows from the 
% fact that it satisfies the differential 
% equation and the assumption that $\circ_\tau$ 
% is convergent. 
% \end{remark} 

\subsection{$\hGamma$-Integral Structure} 
Let $\Sol(\cX)$ denote the space of multi-valued 
flat sections for $\nabla$. By Proposition
\ref{prop:fundsol}, it is a $\C$-vector 
space spanned by $L(\tau,z)z^{-\frac{\deg}{2}} z^\rho \phi_i$, 
$1\le i\le N$. We will introduce a $\Z$-lattice 
$\Sol(\cX)_\Z$ 
in the space $\Sol(\cX)$ using the $K$-group. 
A similar rational structure was introduced also 
by Katzarkov-Kontsevich-Pantev \cite{KKP}. 
Define a pairing $(\cdot,\cdot)_\Sol \colon 
\Sol(\cX) \otimes \Sol(\cX) \to \C$ by 
\[
(s_1,s_2)_{\Sol} := (s_1(\tau, e^{\pi\iu}z), s_2(\tau,z))_{\rm orb}.  
\]
Here $s_1(\tau,e^{\pi\iu}z)$ denotes the analytic continuation 
of $s_1(\tau,z)$ along the path $[0,1]\ni 
\theta \mapsto e^{\pi\iu\theta} z$. Since $s_1, s_2$ 
are flat sections, the right-hand side of the above formula 
does not depend on $\tau$ and $z$. 
Note that $(\cdot,\cdot)_{\Sol}$ is neither symmetric 
nor anti-symmetric in general. 
It is symmetric (resp.\ anti-symmetric) when $\cX$ is an 
even (resp.\ odd) dimensional Calabi-Yau orbifold. 
The Galois action on $QDM(\cX)$ induces 
the following automorphism $G^\Sol(\xi)$ of 
$\Sol(X)$ for $\xi \in H^2(\cX;\Z)$: 
$(G^\Sol(\xi) s)(\tau,z):= dG(\xi) s(G(\xi)^{-1}\tau,z)$ 
for $s\in \Sol(\cX)$. 

Let $K(\cX)$ be the Grothendieck group of 
topological orbifold vector bundles on $\cX$. 
In the following, 
we could also use the Grothendieck group $K_{\rm alg}(\cX)$ 
of \emph{algebraic} vector bundles. 
Our integral structure depends only on the 
Chern character image of the $K$-group, 
so the algebraic $K$-group defines a subgroup 
of $\Sol(\cX)_\Z$. 
For an orbifold vector bundle $\cE$, take  
its pull-back $\pr^* \cE$ to $\cI\cX$ 
($\pr\colon \cI\cX \to \cX$ is the natural map) 
and consider the eigenbundle decomposition of 
$\pr^*\cE|_{\cX_v}$ with respect to the 
stabilizer action: 
\[
\pr^*\cE|_{\cX_v} = \bigoplus_{0\le f<1} 
(\pr^*\cE)_{v,f}
\]
where $(\pr^*\cE)_{v,f}$ is the piece on which 
the stabilizer of $\cX_v$ acts by $\exp(2\pi\iu f)$. 
The Chern character map $\tch\colon K(\cX) \to 
H^*(\cI\cX)$ is defined by 
\[
\tch(\cE) := \bigoplus_{v\in \sfT} \sum_{0\le f <1} 
e^{2\pi\iu f} \ch((\pr^*\cE)_{v,f}). 
\]
Let $\delta_{v,f,i}$, $i=1,\dots,l_{v,f}$ 
be the Chern roots of $(\pr^*\cE)_{v,f}$, where 
$l_{v,f} =\rank((\pr^*\cE)_{v,f})$. 
The $\hGamma$-class of $\cE$ is defined to be 
\begin{equation*}
%\label{eq:Gammaclass} 
\hGamma(\cE) := \bigoplus_{v\in \sfT} \prod_{0\le f<1} 
\prod_{i=1}^{l_{v,f}} \Gamma(1-f+\delta_{v,f,i}) 
\in H^*(\cI\cX). 
\end{equation*} 
Here the $\Gamma$-function in the right-hand side 
should be expanded in Taylor series at $1-f>0$.  
This is a multiplicative \emph{transcendental} 
characteristic class. 
We write $\hGamma_\cX := \hGamma(T\cX)$. 
For simplicity we assume that $\cX$ has no 
generic stabilizers, as this is true for 
our later examples. 

\begin{definition}[{\cite[Definition 2.9, Proposition 2.10, 
Remark 2.11]{Iritani:int}, \cite[Definition 3.2]{KKP}}]
\label{def:intstr} 
Define the \emph{$K$-group framing}\footnote
{The convention here is different from \cite{Iritani:int}.} 
$\frs \colon K(\cX) \to \Sol(\cX)$ 
of the space $\Sol(\cX)$ by 
\begin{align} 
\label{eq:Psi}
\begin{split} 
\frs (\cE)(\tau,z) &:= (2\pi \iu)^{-n} 
L(\tau,z) z^{-\frac{\deg}{2}} z^\rho \Psi(\cE) \\ 
\text{where } & \Psi(\cE) :=  %(2\pi)^{-\frac{n}{2}} 
\hGamma_\cX \cup (2\pi\iu)^{\frac{\deg_0}{2}} 
\inv^* \tch(\cE).  
\end{split} 
\end{align} 
Here $\deg_0$ denotes the degree \emph{without} the age shift, 
i.e.\ we define 
$(2\pi\iu)^{\frac{\deg_0}{2}}|_{H^{2k}(\cI\cX)} := (2\pi\iu)^k$ 
and $\hGamma_\cX \cup$ is the cup product in $H^*(\cI\cX)$. 
The \emph{$\hGamma$-integral structure} $\Sol(\cX)_\Z
\subset \Sol(\cX)$ is defined to be the image of $\frs$. 
This satisfies the following properties. 
\begin{enumerate}
\item $\Sol(\cX)_\Z$ is a lattice in $\Sol(\cX)$, i.e.\ 
$\Sol(\cX) = \Sol(\cX)_\Z\otimes_\Z \C$. 

\item We have $G^\Sol(\xi) (\frs(\cE)) = 
\frs(\cE \otimes \cL_\xi^\vee)$ for $\xi \in H^2(\cX;\Z)$.  
In particular the Galois action preserves the lattice 
$\Sol(\cX)_\Z$. 

\item The pairing $(\cdot,\cdot)_\Sol$ takes values in 
$\Z$ on $\Sol(\cX)_\Z$. For holomorphic vector bundles 
$\cE_1, \cE_2$, one has $(\frs(\cE_1),\frs(\cE_2))_\Sol 
= (-1)^n \chi(\cE_2,\cE_1) := \sum_{i=0}^n (-1)^{i+n} 
\dim \Ext^i(\cE_2,\cE_1)$. 
\end{enumerate} 
\end{definition} 
The last part (iii) of the properties follows from 
Kawasaki-Riemann-Roch \cite{Kawasaki:RR, Toen} 
and the fact that the $\hGamma$-class is roughly 
the half of the Todd class. In fact, for a smooth 
variety $X$, the $\hGamma$-class and the Todd class 
are related by 
\begin{equation}
\label{eq:half}
((-1)^{\frac{\deg_0}{2}}\hGamma_X)  
\cdot \hGamma_X \cdot e^{\pi\iu c_1(X)} 
= (2\pi\iu)^{\frac{\deg_0}{2}} \Td(TX)
\end{equation} 
thanks to the functional equality 
$\Gamma(1-z) \Gamma(1+z) = \pi z/\sin(\pi z)$. 
(For an orbifold the relationship is more complicated. 
See \cite[p.124]{Iritani:RIMS}.) 

% Using the $\hGamma$-integral structure, 
% we define the quantum cohomology central charge. 
% It is considered as an A-model analogue of periods 
% or oscillatory integrals. 
\begin{definition}
For $\cE\in K(\cX)$ and a section $\phi(\tau,z)
\in \cO(F)$ 
of the quantum $D$-module of $\cX$, 
we define the \emph{A-period} $\Pi(\phi,\cE)$ to be 
the multi-valued function on $U\times \C^\times$ 
\begin{equation} 
\label{eq:A-period}
\Pi(\phi,\cE)(\tau,z) := (\phi(\tau,-z),\frs(\cE)(\tau,z))_{F}.    
\end{equation} 
The special case $Z(\cE):=(2\pi\iu)^{-n}\Pi(\unit,\cE)$, 
$n=\dim_\C \cX$ is the  
\emph{quantum cohomology central charge} of $\cE$ 
defined in \cite{Iritani:int}. 
\end{definition}  
Under mirror symmetry the flat section $\frs(\cE)$ 
should correspond to a Gauss-Manin constant cycle $C_\cE$ 
and the above pairing $\Pi(\phi,\cE)$ 
to the integration of the de Rham form mirror to 
$\phi$ over $C_\cE$. 
The unit section $\unit$ should correspond to 
a holomorphic (oscillatory) volume form. 
Using $L(\tau,z)^\dagger = L(\tau,-z)^{-1}$ 
\eqref{eq:unitarityofbL}, 
we can rewrite the A-periods 
in terms of the inverse fundamental solution.  
\begin{equation} 
\label{eq:A-period2} 
\Pi(\phi,\cE)(\tau,z) = 
\left( L(\tau,-z)^{-1} \phi(\tau,-z), \;
z^{n-\frac{\deg}{2}} z^\rho \Psi(\cE)
\right)_{\rm orb}.   
\end{equation} 
In particular, $Z(\cE)$ is a component of the $J$-function: 
\begin{equation*} 
%\label{eq:centralcharge2} 
Z(\cE) = \frac{1}{(2\pi\iu)^n} 
\left( J(\tau,-z), \;
z^{n-\frac{\deg}{2}} z^\rho \Psi(\cE)
\right)_{\rm orb},   
\end{equation*}
where $J(\tau,z) = L(\tau,z)^{-1} \unit$ is the untwisted 
$J$-function of $\cX$ with $Q=1$. 

\section{Mirror Theorem for Toric Complete Intersections} 
In this section we state a Givental-style 
mirror theorem for complete intersections 
in toric orbifolds. By the mirror theorem 
we can calculate the $J$-function 
or the fundamental solution in terms of
explicit hypergeometric series. 

\subsection{Notation on Toric Orbifolds} 
\label{subsec:toricorb}
\emph{Toric orbifolds} or \emph{toric Deligne-Mumford stacks}  
were introduced by Borisov-Chen-Smith \cite{Borisov-Chen-Smith} 
in terms of a \emph{stacky fan}. 
Here we fix notation for toric orbifolds 
and state basic facts.  
We only consider compact weak Fano toric orbifolds 
without generic stabilizers. 
See \cite{Danilov:toric, Oda, Borisov-Chen-Smith} 
for the basics of toric varieties and stacks. 
A similar but more detailed account was given in 
\cite[Section 3.1]{Iritani:int} with a little 
different notation. 

Let $\bN\cong \Z^n$ be a free abelian group. 
Set $\bN_\R = \bN \otimes_\Z \R$. 
Let $\Delta\subset \bN_\R$ be an integral convex polytope 
containing the origin $0$ in its interior. 
We choose a stacky fan $(\Sigma, \beta)$ 
on $\bN$ \emph{adapted to} $\Delta$. 
It consists of the data 
\begin{itemize} 
\item 
a rational simplicial fan $\Sigma$ in the 
vector space $\bN_\R$; 
\item 
a homomorphism $\beta \colon \Z^m \to \bN$ 
such that $\{\R_{\ge 0} b_1,\dots, \R_{\ge 0} b_m\}$ 
is the set $\Sigma^{(1)}$ of one-dimensional cones 
of $\Sigma$, where $b_i = \beta(e_i)$ is the image 
of the standard basis $e_i \in \Z^m$  
\end{itemize} 
which are adapted to $\Delta$ in the sense 
that $\Delta$ is the convex hull of 
$b_1,b_2,\dots,b_m$ and that $b_1,\dots,b_m$ 
are on the boundary of $\Delta$. 
We call $\Delta$ the \emph{fan polytope}. 
These data give rise to a weak Fano (i.e.\ $c_1(\cX)$ is nef) 
toric orbifold $\cX$. The coarse moduli space 
$X$ of $\cX$ is the toric variety associated with the fan $\Sigma$. 
We furthermore assume that 
\begin{itemize} 
\item the fan $\Sigma$ admits a strictly convex 
piecewise linear function\footnote{A piecewise linear 
function is a continuous function on $\bN_\R$ which is 
linear on each cone of $\Sigma$. See \cite{Oda} 
for the (strict) convexity.}
$\varphi \colon N_\R \to \R$; 
\item the set $\Delta\cap \bN$ generate $\bN$ 
as a $\Z$-module. 
\end{itemize} 
The first condition means that the underlying 
toric variety $X$ is projective. 
The second condition\footnote{
This second assumption is satisfied if 
$\pi_1^{\rm orb}(\cX)$ is trivial; 
in particular if $\cX$ is a weighted projective space. 
We will state the toric mirror theorem without this 
assumption in \cite{CCIT:toric}. 
The results in this paper should be generalized 
also without this assumption, but we 
will stick to this case for simplicity.}
ensures that the quantum $D$-module of $\cX$ 
over the small parameter space $H^{\le 2}_{\rm orb}(\cX)$ 
is generated by the $I$-function  
(see \cite[Lemma 4.7]{Iritani:int}).  
Essentially the same assumption was 
made in \cite{Iritani:int} 
(see Remark 3.4 \emph{ibid}). 
We usually identify a cone $\sigma$ of $\Sigma$ 
with the subset $\{i \;|\; b_i \subset \sigma\}$ 
of $\{1,\dots,m\}$. 

\begin{remark} 
Borisov-Chen-Smith \cite{Borisov-Chen-Smith}
allowed $\bN$ to have torsion and the torsion part of $\bN$ 
equals the group of generic stabilizers of $\cX$.   
In this case the mirror of $\cX$ becomes 
disconnected \cite{Iritani:int}. 
We will restrict to the free $\bN$ to reduce  
technical complications. 
\end{remark}
Take a subset $\{b_{m+1},\dots,b_{m+s}\}$ of 
$(\bN \cap \Delta) \setminus \{b_1,\dots,b_m\}$ 
such that $b_1,\dots,b_{m+s}$ generate $\bN$ as 
an abelian group.  
These are called \emph{extended ray vectors}. 
They define an \emph{extended stacky fan}  
in the sense of Jiang \cite{Jiang:toricstackbundle}. 
%We can assume that $b_{m+s} = 0$ without loss of generality.  
Let $\hbeta\colon \Z^{m+s} \to \bN$ be the 
homomorphism sending the standard basis vectors 
$e_1,\dots,e_{m+s}$ to $b_1,\dots,b_{m+s}$. 
Then $\hbeta$ is surjective by the assumption. 
Define $\LL := \Ker \hbeta$. 
The \emph{(extended) fan sequence} is the exact sequence: 
\begin{align*} 
\begin{CD} 
0 @>>> \LL @>>> \Z^{m+s} @>{\hbeta}>> \bN @>>> 0   
\end{CD}
\end{align*} 
and the \emph{(extended) divisor sequence} 
is its dual: 
\[
\begin{CD} 
0 @>>> \bM @>{\hbeta^*}>> (\Z^{m+s})^* @>{D}>> \LL^* @>>> 0.   
\end{CD} 
\]
Here $\bM := \Hom(\bN,\Z)$. 
Let $D_i = D(e_i^*)\in \LL^*$ be the image of 
the standard basis $e_i^* \in (\Z^{m+s})^*$. 
The Picard group $\Pic(\cX)$ on the \emph{stack} $\cX$ 
is given by 
\[
\Pic(\cX) \cong H^2(\cX;\Z) \cong 
\LL^*\Big/\sum_{i=m+1}^{m+s} \Z D_i. 
\]
The image $\ov{D}_i$ of $D_i$ in $\Pic(\cX)$ is the 
class of a torus invariant divisor. 
We call $D_i$ the \emph{extended toric divisor class}. 
The anticanonical class is given by $\rho:= c_1(\cX) 
= -K_\cX = \sum_{i=1}^m \ov{D}_i$. 
The \emph{extended anticanonical class} is defined by 
$\hrho := \sum_{i=1}^{m+s} D_i$. 
Every element of $\Pic(\cX)$ is represented by  
an integral linear combination 
of toric divisors $\ov{D}_1,\dots,\ov{D}_m$. 
For an expression $\xi = \sum_{i=1}^m n_i \ov{D}_i$, 
define a piecewise linear function $\varphi_\xi\colon \bN_\R \to \R$ 
by $\varphi_\xi(b_i) = n_i$ for $1\le i\le m$.  
The function $\varphi_\xi$ is ambiguous 
up to an integral linear function in $\bM = \Hom(\bN,\Z)$. 
We have the following: 
\begin{itemize} 
\item $\xi$ is nef (resp.\ ample) 
$\Longleftrightarrow$ $\varphi_\xi$ is convex (resp.\ strictly convex); 
\item For $v\in \Boxop$, $\{\varphi_\xi(v)\}$ is 
the age $f_v(\xi)$ of the line bundle $\cL_\xi$ along $\cX_v$. 
%\item $\xi$ is a Cartier divisor on the coarse moduli space $X$ 
%
%$\Longleftrightarrow$ $\varphi_\xi$ has an integral slope 
%(in $\bM$) on each maximal cone. 
\end{itemize} 
%($H_2(\cX;\Q)$ is also identified with $\Ker(\beta\otimes \Q)$.)
% fits into the exact sequence 
% (the non-extended fan sequence):  
% \[
% \begin{CD} 
% 0 @>>> H_2(\cX;\Q) @>>> \Q^m @>{\beta\otimes \Q}>> \bN_\Q @>>> 0.  
% \end{CD} 
% \] 
Define the set $\Boxop$ by 
\[
\Boxop := \Bigl\{ v \in \bN \;\Big|\; \exists \sigma \in \Sigma, 
\ 0\le \exists c_i<1, \ v= \sum_{i\in \sigma} c_i b_i \Bigr\}.  
\] 
This parametrizes connected components of $\cI\cX$ 
\cite{Borisov-Chen-Smith}.  
For $v\in \Boxop$, let $\cX_v$ denote 
the corresponding component of $\cI\cX$ and 
$\unit_v\in H^0(\cX_v) \subset H^{2\age(v)}_{\rm orb}(\cX)$ 
denote the unit class supported on $\cX_v$. 
Here $\age(v)$ is given by 
$\age(v) = \sum_{i\in \sigma} c_i$ when 
$v$ is written as 
$v = \sum_{i \in \sigma} c_i b_i$ 
for some cone $\sigma\in \Sigma$ and 
$c_i\ge 0$. 
The extended divisors $D_{m+1},\dots,D_{m+s}$ correspond 
to the classes $\unit_{b_{m+1}}, \dots, 
\unit_{b_{m+s}}$ in $H^{\le 2}_{\rm orb}(\cX)$. 
%We have $\unit_{b_{m+s}} = \unit_0 = \unit$. 

Note that $H_2(\cX;\Q) \cong (\bigoplus_{i=m+1}^{m+s} \Q D_i)^\perp
\subset \LL_\Q := \LL\otimes \Q$.  
We see that $H_2(\cX;\Q)$ 
has a canonical complementary subspace in $\LL_\Q$. 
For $m+1\le j \le m+s$, 
$b_j$ is contained in a cone $\sigma$ 
of $\Sigma$ and we can write $b_j = \sum_{i\in \sigma} 
c_{ji} b_i$ for some $c_{ji}\ge 0$. 
Then $\delta_j :=e_j - \sum_{i\in \sigma} c_{ji} e_i
\in \Q^{m+s}$ belongs to $\LL_\Q$. 
We have 
\begin{equation} 
\label{eq:LQdecomp} 
\LL_\Q = H_2(\cX;\Q) \oplus \bigoplus_{j=m+1}^{m+s} \Q \delta_j.
\end{equation} 
The elements $\delta_{m+1},\dots,\delta_{m+s}$ are 
dual to $D_{m+1},\dots,D_{m+s}$ 
and regarded as orbifold \emph{homology} classes (of degree $\le 2$).  
Set $\NE_{\cX,\sigma} := 
\{ d \in H_2(\cX;\R) \;|\;
\forall i \in \{1,\dots,m\} \setminus \sigma, \ 
\pair{\ov{D}_i}{d} \ge 0\}$ for a cone $\sigma$.  
The \emph{Mori cone} $\NE_\cX\subset H_2(\cX;\R)$ 
is given by 
\[
\NE_\cX = \sum_{\sigma \in \Sigma} \NE_{\cX,\sigma}. 
\]
The \emph{extended Mori cone} 
$\hNE_\cX\subset \LL_\R := \LL\otimes_\Z \R$ 
is defined to be %(see \cite[Lemma 3.2]{Iritani:int} 
%for the dual statement): 
\[
\hNE_{\cX} := \NE_{\cX} + \sum_{m+1\le j \le m+s} 
\R_{\ge 0} \delta_j.  
\]
% Dually, we obtain a canonical splitting of the 
% projection $\hbL^*_\Q \to \LL^*_\Q$: 
% \[
% \hbL^*_\Q = \Bigl(
% \bigoplus_{j=m+1}^{m+s} \Q \delta_j\Bigr)^\perp 
% \oplus \bigoplus_{j=m+1}^{m+s} \Q \hD_j \cong 
% \LL^*_\Q \oplus \bigoplus_{j=m+1}^{m+s} \Q \hD_j. 
% \]
% Henceforth we regard $\LL^*_\Q = H^2(\cX;\Q)$ as a subspace 
% of $\hbL^*_\Q$ using this canonical splitting. 
% The K\"{a}hler cone ... 
For $v\in \Boxop$, we define 
$\K_v$ to be the subset of $\Q^{m}\times \Z^s
\subset \Q^{m+s}$ 
consisting of all $d \in \Q^m\times \Z^s$ 
such that $\sum_{i=1}^{m+s} d_i b_i +v =0$ 
and that $\{1\le i\le m\, |\, d_i\notin\Z\}$ 
is a cone of $\Sigma$. 
Let us write $v = \sum_{i\in \sigma} c_i b_i$ 
for some cone $\sigma$ and $c_i\in [0,1)$
and set $c_i = 0$ for $i\notin \sigma$. 
Then we have a relation $\sum_{i=1}^{m+s} (d_i + c_i)b_i =0$ 
for $d \in \K_v$. 
We denote by $d+v$ the element of $\LL_\Q$ defined 
by this relation. 
% \begin{align*} 
% \K_v &:= 
% \left\{ \nu \in \Q^{m}\times \Z^s  
% \; \Big|\; \sum_{i=1}^{m+s} \nu_i b_i + v = 0, \ 
% \exists \sigma \in \Sigma, \ 
% \forall i\in \{1,\dots,m \} \setminus \sigma, \ 
% \nu_i \in \Z \right\}, \\ 
% \K_{\rm eff} & := 
% \left\{ d\in \LL_\Q \;\big|\; 
% \exists \sigma \in \Sigma, \ 
% \forall i \in \{1,\dots,m+s\} \setminus \sigma,\ 
% \pair{D_i}{d} \in \Z_{\ge 0}
% \right\}. 
% \end{align*} 
The lattice $\LL$ acts on $\K_v$ by addition 
and $\K_0 \subset \LL_\Q$.  
We define the \emph{reduction function} 
$\{- \, \cdot \, \} \colon \K_v \to \Boxop$ by 
\[
\{-d\} := \sum_{i=1}^m \{ - d_i \} b_i 
\]
where $\{r\}$ denote the fractional part of $r$. 
Because $\sum_{i=1}^{m+s} d_i b_i + v=0$, 
we have $\{-d\} = \sum_{i=1}^{m+s} \ceil{d_i} b_i + v$ 
and so $\{-d\} \in \bN$.  
The reduction function in fact induces an 
isomorphism $\K_v /\LL \cong \Boxop$. 

\subsection{Mirror Theorem I: Toric Orbifolds}
\label{subsec:mirrortoric} 
Let $\cX$ be a toric orbifold as in the previous section. 
Define $\cM := \Spec\C[\LL] = \Hom(\LL,\C^\times)$. 
For $d\in \LL$, let $q^d$ denote the corresponding 
element in $\C[\LL]$. This is a 
function $q^d \colon \cM \to \C^\times$. 
The space $\cM$ has a partial (possibly singular) 
compactification $\ov\cM := \Spec\C[\LL \cap \hNE_\cX]$.  
It has a special point (large radius limit point) 
$\bzero$ defined by $q^d=0$ for all nonzero $d\in \LL\cap \hNE_\cX$. 
We choose a $\Z$-basis $p_1,\dots,p_{r+s}$ of $\LL^*$ 
(here $r:= m-n$) such that each $p_a$ is \emph{extended nef}   
i.e.\ $p_a$ is semi-positive on $\hNE_\cX$ 
and $p_{r+1},\dots,p_{r+s} \in \sum_{j=m+1}^{m+s} \Q_{\ge 0} D_j$. 
Then we have the corresponding co-ordinates 
$q_1,\dots,q_{r+s}$ on $\cM$ such that 
$q^d = q_1^{\pair{p_1}{d}} \cdots q_{r+s}^{\pair{p_{r+s}}{d}}$. 
These co-ordinates $(q_1,\dots,q_{r+s})$ give a 
desingularization $\C^{r+s} \to \ov\cM$ 
such that $\bzero$ corresponds to the origin of $\C^{r+s}$. 
For $d\in \LL_\Q$, $q^d$ defines a possibly multi-valued 
function on $\cM$. 
Let $\ov{p}_a\in H^2(\cX;\Z)$ denote the image of 
$p_a \in \LL^*$. 
We write $\ov{p} \log q := \sum_{a=1}^{r} \ov{p}_a \log q_a$. 
This is an $H^2(\cX;\C)$-valued (multi-valued) function on $\cM$. 
% Define a multi-valued map $\ov{p}\log q \colon 
% \cM \to H^2(\cX;\C)$ by 
% $\ov{p} \log q = \sum_{a=1}^{r} \ov{p}_a \log q_a$. 
% This does not depend on the choice of a basis $\{p_a\}$. 
% Over the real locus $\cM_\R = \Hom(\LL,\R_{>0})$, 
% we shall take the branches of $\log q_a$, $q^d$ so that 
% $\log q_a \in \R$ and $q^d \in \R_{>0}$. 

\begin{definition}[\cite{CCIT:toric}; 
See also {\cite[Section 4.1]{Iritani:int}}] 
Take $v\in \Boxop$. 
Define an $H_{\rm orb}^*(\cX)$-valued (multi-valued) 
function $I^v(q,z)$ on an open subset of 
$\cM \times \C^\times$ by  
\[
I^v(q,z) = e^{\ov{p} \log q/z} 
\sum_{d\in \K_v} q^{d +v} \prod_{i=1}^{m+s} 
\frac{\prod_{k>d_i, \{k\} = \{d_i\}} (\ov{D}_i + k z)} 
{\prod_{k>0, \{k\} = \{d_i\} } (\ov{D}_i + k z) } 
\unit_{\{-d\}}. 
\]
Here all but finite terms in the infinite product cancel 
and $\ov{D}_j=0$ for $m+1 \le j\le m+s$. 
The terms with $d+v\notin \hNE_\cX$ automatically 
vanish and 
$I^v(q,z)$ is convergent in a neighborhood of $\bzero$.  
Apart from the prefactor $e^{\ov{p}\log q/z}$, 
it is homogeneous of degree $2\age(v)$ 
with respect to the grading of $H_{\rm orb}^*(\cX)$, 
$\deg(q^d) := 2 \pair{\hrho}{d}$ and $\deg z :=2$.  
The series $I(q,z) := I^0(q,z)$ 
is called the \emph{$I$-function}. 
We have the asymptotics  
\begin{align*} 
I^v(q,z) &= e^{\ov{p}\log q/z} (\unit_v +  O(q)) \\  
I(q,z) & = \unit + \frac{\tau(q)}{z}  + O(z^{-2}) 
\end{align*} 
where $O(q)$ denotes a function vanishing at $\bzero$ 
and $\tau(q)$ is a multi-valued map  
with values in $H^{\le 2}_{\rm orb}(\cX)$, 
called the \emph{mirror map}. 
The map $\tau(q)$ induces a single-valued 
map
\begin{equation} 
\label{eq:X_mirrormap} 
\tau(q)\colon \{(q_1,\dots,q_r)\;|\; 0<|q_a|<\epsilon \} 
\to H^{\le 2}_{\rm orb}(\cX;\C)/H^2(\cX;\Z) 
\end{equation} 
for some $\epsilon>0$.  
Here $H^2(\cX;\Z)$ acts on 
$H^{\le 2}_{\rm orb}(\cX)$ by the Galois action 
$\xi\mapsto G(\xi)$. 
\end{definition} 

The following will be shown in joint work 
with Coates, Corti and Tseng \cite{CCIT:toric} 
(see \cite{CCLT:wp} for the case of weighted projective spaces):  
\begin{theorem}[\cite{CCIT:toric}] 
\label{thm:toricmirrorthm}
Let $\cX$ be a toric orbifold in Section 
\ref{subsec:toricorb} and 
$J(\tau,z)$ be the untwisted $J$-function 
of $\cX$ with $Q=1$.  
Then we have $I(q,z) = J(\tau(q),z)$.
\end{theorem}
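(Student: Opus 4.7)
The plan is to follow Givental's Lagrangian cone philosophy, generalized to the orbifold setting as in \cite{CCIT:tw}. Recall that the twisted (here untwisted) genus zero Gromov-Witten invariants of $\cX$ are encoded in a Lagrangian cone $\cL_\cX$ inside Givental's symplectic loop space $\cH_\cX = H^*_{\rm orb}(\cX)\otimes \Lambda\{z,z^{-1}\}$, and that a family $z + t + O(z^{-1})$ lying on $\cL_\cX$ is necessarily of the form $-z\bJ(t,-z)$. After the specialization $Q=1$, the analogous statement identifies $J(\tau,z)$ with the unique slice of the cone with leading behavior $z \unit + \tau + O(z^{-1})$. So the theorem reduces to showing (a) the equivariant $I$-function $I(q,-z)$ produces a family on the cone, and (b) its large-$z$ expansion agrees with $zJ(\tau(q),-z)$ after the explicit mirror transformation $\tau(q)$ defined by the asymptotics $I(q,z) = \unit + \tau(q)/z + O(z^{-2})$.

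First I would lift the problem to the $T=(\C^*)^n$-equivariant category, using the torus action on $\cX$ induced by the stacky fan structure. At each $T$-fixed point $\cX_v$ of $\cI\cX$ (indexed by pairs of a maximal cone $\sigma\in\Sigma$ and an element of $\Boxop$ lying in $\sigma$), a point of $\cL_\cX$ has a controlled pole structure in the equivariant parameters: the poles in $z$ are dictated by recursion relations encoding the contributions of stable maps whose image is concentrated at $T$-fixed one-dimensional orbits. Following Givental's characterization theorem, refined by Brown and Coates-Corti-Iritani-Tseng to orbifold targets and to the extended stacky fan framework, a family in $\cH_\cX$ lies on $\cL_\cX$ if and only if it satisfies these recursions, has the correct polynomiality in non-equivariant limits, and its $O(z^{-1})$ tail at each fixed point is a valid tangent direction of the cone at the base vertex.

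Next I would verify these three properties for the equivariant lift $I^{\rm eq}(q,z)$ of $I(q,z)$: the hypergeometric product over $k$ with $\{k\}=\{d_i\}$ produces precisely the residues and poles dictated by the fixed point recursion at $\cX_v$, where the shift $\unit_{\{-d\}}$ correctly accounts for the twisted sector entered by the $T$-invariant curve; the weak Fano hypothesis on $\cX$ together with the semi-positivity of the extended divisors $p_a$ forces the $z$-polynomial part to behave well, giving the required regularity in non-equivariant limits; and the asymptotic expansion $I^{\rm eq}(q,z)=e^{\ov{p}\log q/z}(\unit + O(q))$ at each fixed point guarantees the cone-point initial condition. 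Taking the non-equivariant limit and comparing leading $1/z$-coefficients then forces $I(q,z)=J(\tau(q),z)$. The assumption that $\Delta\cap\bN$ generates $\bN$ enters here: it ensures the mirror map $\tau(q)$ takes values in $H^{\le 2}_{\rm orb}(\cX)$, hence that the $I$-function is indeed expressible through $J$ at a point in the small parameter domain where $J$ is defined.

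The main obstacle is the verification of the recursion relations in step (a) at the orbifold fixed loci, because one must track the fractional $k$ in the product $\prod_{\{k\}=\{d_i\}}$ against the twisted sector insertions $\unit_{\{-d\}}$ and the fractional age shifts in $H^{2\age(v)}_{\rm orb}(\cX)$; the bookkeeping of stabilizers along $T$-invariant stacky $\Proj^1$'s (including gerbe structure at nodes) is delicate, and is precisely the point where the orbifold case departs substantively from Givental's original manifold argument. A secondary difficulty is that, because the extended stacky fan introduces the vectors $b_{m+1},\dots,b_{m+s}$ which do not correspond to torus-invariant divisors, one must justify that the extension still produces a point on $\cL_\cX$ --- this uses Jiang's toric stack bundle perspective \cite{Jiang:toricstackbundle} and an extra Birkhoff-type truncation to eliminate the $\unit_{b_j}$-direction. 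Once these are handled, the identification $I=J\circ\tau$ follows by the standard uniqueness of cone slices.
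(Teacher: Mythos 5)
The paper does not actually prove Theorem \ref{thm:toricmirrorthm}: it is stated as an import, with the attribution to \cite{CCIT:toric} (a joint work cited as ``in preparation''), and with a pointer to \cite{CCLT:wp} for the weighted projective case. So there is no in-paper proof to compare your argument against. Your sketch is the standard Givental-style strategy --- equivariant lift, fixed-point recursion relations, polynomiality, cone-point initial condition, then non-equivariant limit and uniqueness of cone slices --- and this is indeed the strategy used in the cited references. That part is plausible and correctly identifies the genuine technical crux: tracking the fractional indices $k$ with $\{k\}=\{d_i\}$, the twisted-sector insertion $\unit_{\{-d\}}$, and the stabilizer data along $T$-invariant stacky $\Proj^1$'s.

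One substantive point is off, though: your treatment of the extended ray vectors $b_{m+1},\dots,b_{m+s}$. These are \emph{not} eliminated by a Birkhoff-type truncation, nor does proving that the extended $I$-function lies on the cone pass through Jiang's toric-stack-bundle construction. The extended $I$-function is a slice of the Lagrangian cone of $\cX$ \emph{itself}, over an enlarged base; the additional variables $q_{r+1},\dots,q_{r+s}$ exactly parametrize the twisted-sector components $\unit_{b_j}\in H^{\le 2}_{\rm orb}(\cX)$ in the mirror map $\tau(q)$, and this is why $\tau(q)$ takes values in all of $H^{\le 2}_{\rm orb}(\cX)$ and not merely $H^2(\cX)$ (cf.\ the asymptotics $I(q,z)=\unit+\tau(q)/z+O(z^{-2})$ and the map \eqref{eq:X_mirrormap}). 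The role of the hypothesis that $\Delta\cap\bN$ generates $\bN$ is also somewhat different from how you frame it: it is used to ensure that the small quantum $D$-module over $H^{\le 2}_{\rm orb}(\cX)$ is cyclically generated by the $I$-function (as noted after the statement of the assumptions in Section \ref{subsec:toricorb}), not merely to ensure that $\tau(q)$ lands in the small parameter domain.
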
 

The function $I^v(q,z)$ can be obtained 
from $I(q,z) = I^0(q,z)$ by differentiation. 
Writing $D_i = \sum_{a=1}^{r+s} \sfm_{ia} p_a$, 
we define the ($z$-decorated) logarithmic vector field 
$\bD_i$ on $\cM$ by $\bD_i := z \sum_{a=1}^{r+s} 
\sfm_{ia} q_a (\partial/\partial q_a)$. 
Taking $\delta \in \K_0$ such that $v=\{-\delta\}$ 
and $\ceil{\delta_i}\ge 0$ for all $i$, 
we can easily see that (see also \cite[Lemma 4.7]{Iritani:int}) 
\begin{equation} 
\label{eq:Iv_der_I} 
I^v(q,z) = q^{-\delta } 
\left( \prod_{i=1}^{m+s} 
\prod_{\nu=0}^{\ceil{\delta_i}-1} 
(\bD_i - \nu z ) \right)  I(q,z). 
\end{equation} 
In the terminology of Givental's Lagrangian cone 
\cite{Coates-Givental, CCIT:tw}, 
$I^v(q,-z)$ is in the tangent space to the 
cone at $-z I(q,-z)$. 
Therefore, $I^v$ appears as a 
column vector of the inverse fundamental 
solution. 
\begin{corollary}[{\cite[Eqn (65)]{Iritani:int}}] 
\label{cor:Iv_forX} 
Let $L(\tau,z)$ denote the 
fundamental solution \eqref{eq:fundsol} 
of the untwisted $(\bs=0)$ theory of $\cX$. 
There exists an $H^*_{\rm orb}(\cX)$-valued 
function $\theta_v(q,z)\in H_{\rm orb}^*(\cX) 
\otimes \cO_{\tcM}[z]$ defined 
on a finite cover $\tcM$ of $\cM$ and in 
a neighborhood of $\bzero$ 
such that 
\[
I^v (q,z) = L(\tau(q),z)^{-1} \theta_v(q,z), 
\quad 
\theta_v(q,z) = \unit_v + O(q).    
\] 
Also $\theta_v(q,z)$ is homogeneous of degree $2 \age(v)$ 
and $\theta_0(q,z) =\unit$. 
\end{corollary}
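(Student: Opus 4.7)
The plan is to combine the toric mirror theorem $I(q,z)=L(\tau(q),z)^{-1}\unit$ (Theorem \ref{thm:toricmirrorthm}) with formula \eqref{eq:Iv_der_I}, which expresses $I^v$ as a logarithmic $q$-differential operator applied to $I$, and then to conjugate that operator through $L^{-1}$. Writing $\hat L(q,z):=L(\tau(q),z)$, the starting identity is
\[
I^v(q,z)=q^{-\delta}\prod_{i=1}^{m+s}\prod_{\nu=0}^{\ceil{\delta_i}-1}(\bD_i-\nu z)\,\hat L^{-1}\unit.
\]

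The main calculation is to determine how $\bD_i$ acts on $\hat L^{-1}$. By the chain rule together with the quantum differential equation $z\partial_{\tau^j}L=-(\phi_j\circ_\tau)L$, one obtains $\bD_i\hat L=-A_i(q)\circ\hat L$, where
\[
A_i(q):=\sum_{a=1}^{r+s}\sfm_{ia}\,q_a\partial_{q_a}\tau(q)\;\in\;H^{\le 2}_{\rm orb}(\cX)\otimes \cO,
\]
and hence $\bD_i\hat L^{-1}=\hat L^{-1}\,A_i(q)\circ$. Setting $\tilde A_i:=\bD_i+A_i(q)\circ$ as an operator on cohomology-valued functions of $q$ and $z$, this yields $\bD_i(\hat L^{-1}F)=\hat L^{-1}\tilde A_i F$. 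Since $q^{-\delta}$ is scalar it commutes with $\hat L^{-1}$, so iteration gives $I^v=\hat L^{-1}\theta_v$ with
\[
\theta_v(q,z):=q^{-\delta}\prod_{i=1}^{m+s}\prod_{\nu=0}^{\ceil{\delta_i}-1}\bigl(\tilde A_i-\nu z\bigr)\unit.
\]
When $v=0$ one may take $\delta=0$; the double product is then empty and $\theta_0=\unit$.

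Next I would verify polynomiality in $z$ and homogeneity. Since $\bD_i$ and $-\nu z$ each carry one explicit $z$-factor while $A_i(q)\circ$ carries none, each operator $\tilde A_i-\nu z$ raises $z$-degree by at most one; applied iteratively to $\unit$ this keeps us in $H^*_{\rm orb}(\cX)\otimes \cO[z]$. Homogeneity of degree $2\age(v)$ is a straightforward weight count using $\deg\bD_i=\deg z=2$ and $\deg q^d=2\pair{\hrho}{d}$ together with the homogeneity of the mirror map and the degree-preserving property of the quantum product.

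The main obstacle will be to show that $\theta_v$ is \emph{regular} at the large radius point $\bzero$ with leading value $\unit_v$: the scalar $q^{-\delta}$ has rational exponents and is singular at $\bzero$, so one must verify that the $q$-expansion of $\prod_{i,\nu}(\tilde A_i-\nu z)\unit$ starts with a matching $q^{\delta}$-factor. I would argue this a posteriori by comparing leading orders on both sides of $I^v=\hat L^{-1}\theta_v$: from \eqref{eq:fundsol} the operator $\hat L^{-1}$ acts as multiplication by $e^{\ov p\log q/z}$ modulo $O(q)$, while the explicit $I$-function formula gives $I^v(q,z)=e^{\ov p\log q/z}(\unit_v+O(q))$, forcing $\theta_v(q,z)=\unit_v+O(q)$ on a finite cover $\tcM\to\cM$ trivializing all the rational exponents $q^{-\delta}$ for $v\in\Boxop$ (which is a finite set). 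A more conceptual alternative is Givental's Lagrangian cone formalism: $I^v(q,-z)$ lies in the tangent space to the cone at $-zI(q,-z)$, which equals $\hat L(q,z)^{-1}\bigl(H^*_{\rm orb}(\cX)\otimes\cO[z]\bigr)$, and this would deliver polynomiality and the existence of $\theta_v$ in one stroke.
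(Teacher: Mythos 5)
Your proposal is correct and takes essentially the same route as the paper: differentiate $I(q,z)=L(\tau(q),z)^{-1}\unit$ by the operator of \eqref{eq:Iv_der_I}, using the conjugation identity $z\partial_a\circ L(\tau(q),z)^{-1}=L(\tau(q),z)^{-1}\circ(z\partial_a+(\partial_a\tau)\circ_{\tau(q)})$, which is precisely your $\bD_i\hat L^{-1}=\hat L^{-1}\tilde A_i$. The paper leaves the polynomiality-in-$z$, homogeneity, and regularity checks implicit; you spell them out (and correctly note the Lagrangian-cone formalism gives a one-stroke alternative, as the paper itself remarks just before the corollary).
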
 
\begin{proof} 
We differentiate $I(q,z) = J(\tau(q),z) = 
L(\tau(q),z)^{-1} \unit$ by the differential 
operator appearing in \eqref{eq:Iv_der_I}.  
Here notice that $z \partial_a \circ 
L(\tau(q),z)^{-1} = L(\tau(q),z)^{-1} 
\circ (z \partial_a + (\partial_a \tau) \circ_{\tau(q)})$ 
for $\partial_a = q_a (\partial/\partial q_a)$.  
\end{proof} 

\subsection{Mirror Theorem II: Toric Complete Intersection} 
\label{subsec:mirrorthm_comp} 
As before, let $\cV$ be the sum of line bundles 
$\cL_1 \oplus \cL_2 \oplus\cdots \oplus \cL_c$ over a toric 
orbifold $\cX$ and $\cY\subset \cX$ 
be a quasi-smooth complete intersection 
with respect to a regular section of $\cV$. 
Let $\iota \colon \cY \to \cX$ be the inclusion. 
% Denote by $\iota^*\colon H^*_{\rm orb}(\cX) \to H^*_{\rm orb}(\cY)$ 
% and $\iota_* \colon H_{\rm orb}(\cY) \to H_{\rm orb}^*(\cX)$ 
% the pull-back and the push-forward defined by 
% $\cI\cY \subset \cI\cX$. 
Let $\xi_i$ be the class of $\cL_i$ in 
$\Pic(\cX) \cong H^2(\cX;\Z)$. 
% We define the \emph{superage} of $v\in \Boxop$ 
% by $\sage(v) := \age(v) - \sum_{i=1}^c f_v(\xi_i)$. 
% If $\cX_v$ intersects with $\cY$, 
% $\sage(v)$ is the age of the sector $\cY \cap \cX_v$. 
% Note that 
% \[
% \text{$\cX_v$ intersects with $\cY$ } 
% \Longleftrightarrow  \ 
% \sharp\{i\,|\, f_v(\xi_i) \in \Z\} \le \dim \cX_v 
% \]
% and in this case $\sage(v)\ge 0$. 
We assume that 
\begin{itemize} 
\item The classes $\xi_1,\dots,\xi_c$ 
and $c_1(\cY) = c_1(\cX) -\sum_{i=1}^c \xi_i$ are nef. 

\item The line bundles $\cL_1,\dots,\cL_n$ are pulled back 
from the coarse moduli space $X$, i.e.\ 
$\xi_i \in H^2(X,\Z)$. 

% \item For each $v\in \Boxop$, one of the following holds: 
% (1) $\sage(v) > 1$ \emph{or} (2) $\sharp\{i\,|\, f_v(\xi_i) \in \Z\} 
% > \dim \cX_v$ \emph{or} (3) $f_v(\xi_i) = 0$ for all $1\le i\le c$. 
%
% \item For $m+1 \le j\le m+s$ and $1\le i\le c$, 
% $f_{b_j}(\xi_i) =0$. 
\end{itemize} 
% Here $f_v(\xi_i)$ denotes the age of the line bundle 
% $\cL_i$ along the sector $\cX_v$. 
Let $\varphi_i\colon \bN_\R \to \R$ be the 
piecewise linear function corresponding 
to $\xi_i$ (see Section \ref{subsec:toricorb}). 
By the second assumption, we have 
$\{\varphi_i(v)\}=f_v(\xi_i) = 0$ for all $1\le i\le c$
and $v\in \Boxop$.  
Define a lift $\txi_i\in \LL^*$ of $\xi_i$ 
by $\txi_i := \sum_{j=1}^{m+s} \varphi_i(b_j) D_j$. 
The lift $\txi_i$ does not depend on the choice of 
$\varphi_i$. 
Then $\txi_i$ is extended nef (semi-positive on 
$\hNE_\cX$) since $\pair{\txi_i}{\delta_j} = 0$. 
Set $\hrho_\cY := \hrho - \sum_{i=1}^c \txi_i$. 
This is also extended nef.  

% \begin{lemma}
% Let $\iota^* \colon H^*_{\rm orb}(\cX) \to H^*_{\rm orb}(\cY)$ 
% denote the pull-back by the inclusion $I\cY \subset \cI\cX$. 
% If $d\in \K_{\rm eff}$ and 
% $\pair{\txi_i}{d}<0$ then $\iota^* \unit_{v(d)} =0$. 
% \end{lemma} 

\begin{definition} 
\label{def:Ifunction_Y}
Let us write $I^v(q,z)= e^{\ov{p}\log q/z} \sum_{d\in \K_v} 
q^{d+v} \Box_d \unit_{\{-d\}}$.  
For $v\in \Boxop$, 
we define an $H_{\rm orb}^*(\cX)$-valued function 
$I_\cV^v(q,z)$ by 
\[
I_\cV^v(q,z) = 
e^{\ov{p}\log q/z} 
\sum_{d\in \K_{v}}q^{d+v} 
\prod_{i=1}^c 
\prod_{k=1}^{\pair{\txi_i}{d+v}} 
(\xi_i + k z) 
\cup \Box_d \unit_{\{-d\}}.  
\]
Note that $\Box_d \unit_{\{-d\}} =0$ 
for $d+v \notin \hNE_\cX$ and 
$\pair{\txi_i}{d+v}\ge 0$ otherwise. 
Also it is easy to see that $\pair{\txi_i}{d+v}$ 
is an integer. 
Under the above assumption, $I_\cV^v(q,z)$ is convergent 
near $\bzero$. Apart from the prefactor 
$e^{\ov{p}\log q/z}$, it is homogeneous of degree 
$2\age(v)$ %=2\sage(v)$  
with respect to 
%the superage grading 
%$\deg_\cY \unit_v := 2 \sage(v)$, 
the grading of $H_{\rm orb}^*(\cX)$, 
$\deg_\cY q^d := 2\pair{\hrho_\cY}{d}$ 
and $\deg z:=2$. 
We set $I_\cV(q,z) := I^0_{\cV}(q,z)$. 
We have the asymptotics: 
\begin{align}
\label{eq:Y_asymptotic} 
\begin{split} 
I_\cV^v(q,z) &= e^{\ov{p}\log q/z}( \unit_v + O(q))  \\  
I_\cV(q,z) & = F(q) \unit + \frac{G(q)}{z}  + O(z^{-2})   
\end{split} 
\end{align} 
where $F(q)$ is a power series of the form 
$1 + \sum_{d\neq 0} c_d q^d$, 
$c_d \in \Q$ with integral exponents 
$d\in \LL \cap \hNE_\cX$ 
and $G(q)$ is an $H^{\le 2}_{\rm orb}(\cX)$-valued map.  
The \emph{mirror map} 
\begin{equation} 
\label{eq:Y_mirrormap}
\tvarsigma(q) := \frac{G(q)}{F(q)} 
\end{equation} 
defines a single valued map from a neighborhood of 
$\bzero$ to $H^{\le 2}_{\rm orb}(\cX;\C)/H^2(\cX;\Z)$. 
%We also write $\varsigma(q) = \iota^* \tvarsigma(q)$. 
\end{definition} 

\begin{theorem} 
\label{thm:mirrorthm_tw}
Let $L^{\be}(\tau,z)$, $J^{\be}(\tau,z)$ 
be the fundamental solution 
and the $J$-function of the $(\be,\cV)$-twisted 
theory of $\cX$. 
For $v\in \Boxop$, there exists an 
$H^*_{\rm orb}(\cX)$-valued function 
$\tUpsilon_v(q,z) \in H^*_{\rm orb}(\cX)
\otimes \cO_{\tcM}[z]$ defined on 
a finite cover $\tcM$ of $\cM$ and in a neighborhood of 
$\bzero$ such that  
\begin{equation} 
\label{eq:Upsilon}
I_\cV^v(q,z) 
= L^\be(\tvarsigma(q),z)^{-1} \tUpsilon_v(q,z), 
\quad 
\tUpsilon_v(q,z) = \unit_v + O(q).   
\end{equation} 
Also $\tUpsilon_v(q,z)$ is homogeneous of degree 
$2 \age(v)$ for the grading $\deg_\cY(q^d) = 
2 \pair{\hrho_\cY}{d}$. 
We find that $\tUpsilon_0 = F(q) \unit$ 
by comparing the asymptotics in $z$.  
Therefore, 
\[
I_\cV(q,z) = F(q) 
J^\be(\tvarsigma(q),z).  
\]
\end{theorem}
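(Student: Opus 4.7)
The plan is to reduce the statement to the untwisted toric mirror theorem (Theorem \ref{thm:toricmirrorthm} with Corollary \ref{cor:Iv_forX}) via the quantum Lefschetz principle of Coates--Givental--Tseng \cite{Coates-Givental, CCIT:tw}. The hypergeometric modification $\prod_{i=1}^{c}\prod_{k=1}^{\pair{\txi_i}{d+v}}(\xi_i+kz)$ that distinguishes $I_\cV^v$ from $I^v$ is precisely the factor produced by the CGT symplectic operator relating the untwisted Givental cone of $\cX$ to the $(\be,\cV)$-twisted cone; this operator exists in the non-equivariant limit $\lambda\to 0$ by Proposition \ref{prop:Eulertw_Y}.

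\textbf{Key steps.} First, by Corollary \ref{cor:Iv_forX}, $-zI^v(q,-z)$ lies in the tangent space to the untwisted Lagrangian cone at the point $-zJ(\tau(q),-z)$, with the tangent direction encoded by $\theta_v(q,z)\in H^*_{\rm orb}(\cX)[z]$. Next, apply the CGT twisting operator; because the line bundles $\cL_j$ are pulled back from the coarse moduli $X$, the classes $\txi_i$ are canonical lifts of $\xi_i$ and the CGT modification exactly reproduces the factor of Definition \ref{def:Ifunction_Y}, placing $-zI_\cV^v(q,-z)$ in the tangent space to the $(\be,\cV)$-twisted cone at $-zI_\cV(q,-z)$. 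Since a tangent vector to this cone at $-zJ^\be(\tau,-z)$ has the form $L^\be(\tau,z)^{-1}\alpha(z)$ with $\alpha(z)\in H^*_{\rm orb}(\cX)[z]$, defining $\tvarsigma(q)$ from the $v=0$ case yields the existence of $\tUpsilon_v(q,z)\in H^*_{\rm orb}(\cX)\otimes\cO_{\tcM}[z]$ with $I_\cV^v=L^\be(\tvarsigma(q),z)^{-1}\tUpsilon_v$. Finally, the large radius limit $q\to 0$ gives $\tvarsigma\to 0$ and $L^\be(0,z)=\id$, so \eqref{eq:Y_asymptotic} reads off $\tUpsilon_v(q,z)=\unit_v+O(q)$, while homogeneity under $\deg_\cY$ is inherited from that of $I_\cV^v$.

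\textbf{Final formula and main obstacle.} For $v=0$, the element $\tUpsilon_0(q,z)$ is homogeneous of degree zero and polynomial in $z$ (with $\deg z=2$), hence is a scalar function of $q$ times $\unit$. Matching $I_\cV(q,z)=F(q)\unit+G(q)/z+O(z^{-2})$ from \eqref{eq:Y_asymptotic} against $L^\be(\tvarsigma,z)^{-1}\tUpsilon_0=\tUpsilon_0(q,0)\bigl(\unit+\tvarsigma/z+O(z^{-2})\bigr)$ forces $\tUpsilon_0=F(q)\unit$ and $\tvarsigma(q)=G(q)/F(q)$ as in \eqref{eq:Y_mirrormap}, and since $J^\be(\tau,z)=L^\be(\tau,z)^{-1}\unit$ the identity $I_\cV(q,z)=F(q)J^\be(\tvarsigma(q),z)$ follows. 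The main obstacle is the cone-compatibility step: one must verify that the abstract CGT operator on Givental's symplectic vector space acts on hypergeometric $I$-functions by exactly the product $\prod_{i,k}(\xi_i+kz)$. This relies on Tseng's orbifold quantum Riemann-Roch \cite{Tseng:QRR}, the non-equivariant limit of Proposition \ref{prop:Eulertw_Y}, and crucially on the nef-and-integral condition on $\txi_i$, which keeps the factor polynomial in $z$ and thereby inside the tangent space rather than in its formal Laurent completion.
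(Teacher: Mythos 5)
Your overall strategy (reduce to the toric mirror theorem via quantum Lefschetz and the Lagrangian-cone formalism) is the same as the paper's, and your treatment of the endgame (tangent vectors at $-zJ^\be(\tvarsigma,-z)$ have the form $L^\be(\tvarsigma,z)^{-1}\alpha(z)$ with $\alpha$ polynomial in $z$; homogeneity plus polynomiality forces $\tUpsilon_0 = F(q)\unit$) is correct. But the step you yourself flag as the ``main obstacle'' is where the real work happens, and your sketch glosses over a genuine difficulty that the paper's proof spends most of its effort on.

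The problem is with the claim that applying the Coates--Givental--Tseng twisting operator to $-zI^v(q,-z)$ places $-zI_\cV^v(q,-z)$ in the tangent space. This is not a direct consequence of the abstract cone machinery, for two reasons. First, the symplectic twisting operator together with the pre-modification $\exp(-\sum_i G_0(-\tbxi_i,z))$ involves the \emph{differential} operators $\tbxi_i = z\sum_a v_{ia}q_a\partial_{q_a}$, which do not commute with the differential operator $q^{-\delta}\prod_{i,\nu}(\bD_i-\nu z)$ that produces $I^v$ from $I^0$; so one cannot simply push $I^v$ through and claim the hypergeometric factor comes along for the ride. Second, and more concretely, if instead one differentiates the twisted $I$-function $\bI_\lambda$ by the operator from \eqref{eq:Iv_der_I}, one obtains (as in \eqref{eq:der_Ilambda}) a series whose extra twisting factor runs over $k \le \pair{\txi_i}{d+\ceil{\delta}}$ rather than $k \le \pair{\txi_i}{d+v}$ as in Definition \ref{def:Ifunction_Y} --- these differ by the $\pair{\txi_i}{\delta}$ factors, and precisely this discrepancy is the \emph{multi-generation} phenomenon discussed in Remark \ref{rem:multigen} (the twisted $I^v_\cV$ genuinely cannot be obtained from $I_\cV$ by a polynomial differential operator in $z\partial$).

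The paper's resolution is to introduce the equivariant parameter $\lambda$, differentiate $\bI_\lambda$ to obtain the wrong-factored series, apply the inverse operator $\prod_{i,k}(\lambda+\tbxi_i+kz)^{-1}$ expanded in negative powers of $\lambda$ (which requires working temporarily over $\C[z](\!(\lambda^{-1})\!)$, i.e., leaving the polynomial setting), and then argue a posteriori that the resulting $\tbUpsilon_v$ must actually be regular at $\lambda=0$ because $\bI^v_\lambda$, $\bL^{\be_\lambda}$ and $\tbvarsigma(q;\lambda)$ all are (Proposition \ref{prop:Eulertw_Y}). Only then can one specialize $\lambda=0$, $Q=1$. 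Without this equivariant detour and regularity argument, your appeal to ``the nef-and-integral condition on $\txi_i$ keeps the factor polynomial in $z$'' does not close the gap: the issue is not polynomiality in $z$ per se but that the twisted $I^v_\cV$ is not reachable by differentiating $I_\cV$ inside the $\cO[z]\langle z\partial\rangle$-module, as the paper emphasizes.
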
 
\begin{proof} 
When the mirror map $\tau(q)$ for $\cX$ is ``linear", 
the last statement follows from the quantum Lefschetz 
theorem \cite[Corollary 5.1]{CCIT:tw} applied to 
the previous theorem \ref{thm:toricmirrorthm}. 
First we see how to modify the proof 
of quantum Lefschetz in \cite{CCIT:tw} 
to calculate a convenient slice ($I$-function) 
of the twisted Lagrangian cone. 
Let $\marsfs{L}_{\bs}$ denote the $(\bc,\cV)$-twisted 
Lagrangian cone \cite[Section 3]{CCIT:tw} 
of $\cX$. 
% For $d\in \LL_\Q$, we write 
% $d = \ov{d} + \sum_{j=m+1}^{m+s} k_j \delta_j$, 
% $\ov{d}\in H_2(\cX;\Q)$ under the 
% decomposition \eqref{eq:LQdecomp}.  
% and   
% introduce co-ordinates $u_1,\dots,u_r$, 
% $v_1,\dots,v_s$ such that 
% $q^d = \exp(u_1 \pair{\ov{p}_1}{\ov{d}}  + \cdots + 
% u_r \pair{\ov{p}_r}{\ov{d}}) 
% v_1^{k_1} \cdots v_s^{k_s}$. 
% Then we have $\ov{p}\log q = \sum_{a=1}^r u_a \ov{p}_a$. 
Define 
\[
\bI_{\bs}(q,z) = e^{\ov{p} \log q/z} 
\sum_{d\in \K_0}  q^d Q^{\ov{d}} 
\prod_{i=1}^c \prod_{k=1}^{\pair{\txi_i}{d}} 
\exp\left( \bs (\xi_i + k z) 
\right) 
\Box_d \unit_{\{-d\}}.  
\] 
Here $\ov{d}\in H_2(\cX;\Q)$ is the $H_2(\cX;\Q)$-component 
of $d\in \LL_\Q$ under the decomposition \eqref{eq:LQdecomp} 
and $\bs(x) = \sum_{k\ge 0} s_k x^k/k!$. 
We claim 
that $-z \bI_\bs(q,-z)$ is on the cone 
$\marsfs{L}_{\bs}$. 
Here we regard $\bI_\bs$ as a 
$\Lambda_\bs[\![\log q_1,\dots,\log q_r, 
q_{r+1}^{1/e},\dots,q_{r+s}^{1/e}]\!]$-valued point on 
Givental's loop space $\cH$ for $e\in \N$ 
such that $e \K_0 \subset \LL$.  
(See the definition of $\cH$ and $\marsfs{L}_\bs$ as  
formal schemes in \cite[Appendix B]{CCIT:tw}.)  
At $\bs=0$, $-z\bI_0(q,-z)$ is on the untwisted cone 
$\marsfs{L}_0$ by 
Theorem \ref{thm:toricmirrorthm}. 
Write $\txi_i = \sum_{a=1}^{r+s} v_{ia} p_a$ and 
define the logarithmic vector field $\tbxi_i := 
z \sum_{a=1}^{r+s} v_{ia} q_a (\partial/\partial q_a)$. 
Then the same argument as the last paragraph of 
the proof of Theorem 4.8 in \cite{CCIT:tw} shows 
that 
\[
\bbf_\bs(q) = 
\exp\left(
-\sum_{i=1}^c G_0(-\tbxi_i,z)\right) (-z\bI_{0}(q,-z)) 
\]
is on the untwisted cone $\marsfs{L}_0$, where 
$G_y(x,z)$ is a formal power series depending on 
$\bs$ defined in \cite{CCIT:tw}. 
Applying Tseng's symplectic operator $\Delta^{\rm tw}$ 
(\cite{Tseng:QRR}; we use the convention  
in \cite[Theorem 4.1]{CCIT:tw}),  
we get an element 
$\Delta^{\rm tw} (\bbf_\bs(q))$ on 
$\marsfs{L}_\bs$. 
Using the property of the function 
$G_y(x,z)$ and $\tbxi_i (e^{\ov{p}\log q/z} q^d)  
= (\xi_i + z \pair{\txi_i}{d} ) e^{\ov{p}\log q/z} q^d$ 
(see Eqns (12), (13) in \cite{CCIT:tw} and 
the discussion following them), 
we find that this equals  
$-z \bI_\bs(q,-z)$. 
This proves the claim. 
% The claim implies that there exists 
% an element $\bg_\bs(q,z) \in H^*_{\rm orb}(\cX) 
% \otimes R\{z\}$ and a mirror map 
% $\bh_\bs(q) \in H^*_{\rm orb}(\cX) \otimes R$ 
% such that 
% $\bI_\bs(q,z) = 
% \bL^\bs(\bh_\bs(q),z)^{-1}\bg_\bs(q,z)$.   
Taking $\bc = \be_\lambda$, we obtain 
a vector $\bI_\lambda$ 
on the $(\be_\lambda,\cV)$-twisted Lagrangian cone: 
\[
\bI_\lambda(q,z) := \bI_\bs(q,z)|_{\bc = \be_\lambda} = 
e^{\ov{p}\log q/z} 
\sum_{d\in \K_0} 
q^{d} Q^{\ov{d}} 
\prod_{i=1}^c \prod_{k=1}^{\pair{\txi_i}{d}}
(\lambda + \xi_i+kz) \Box_{d} \unit_{\{-d\}}. 
\]
By the discussion as in \cite[Section 5.2]{CCIT:tw}, 
we know that $\bI_\lambda$ and $\bJ^{\be_\lambda}$ 
are related as 
$\bI_\lambda(q,z) = \bF(q)
\bJ^{\be_\lambda}(\tbvarsigma(q;\lambda),z)$ where 
$\bF(q)$, $\bvarsigma(q;\lambda)$ 
are determined by the $z$-asymptotics of $\bI_\lambda$ 
in the same way as \eqref{eq:Y_asymptotic} 
and \eqref{eq:Y_mirrormap}. 
(Here $F(q) = \bF(q)|_{Q=1}$, 
$\tvarsigma(q) = \tbvarsigma(q;0)|_{Q=1}$.) 
Now we differentiate $\bI_\lambda$ by the differential 
operator appearing in \eqref{eq:Iv_der_I}. We find  
\begin{equation} 
\label{eq:der_Ilambda}
q^{-\delta } 
\left( \prod_{i=1}^{m+s} 
\prod_{\nu=0}^{\ceil{\delta_i}-1} 
(\bD_i - \nu z ) \right) 
\bI_\lambda= 
e^{\ov{p}\log q/z} 
\sum_{d\in \K_v} q^{d+ v} Q^{\ov{d+v}} 
\prod_{i=1}^c \prod_{k=1}^{\pair{\txi_i}{d+\ceil{\delta}}}
(\lambda + \xi_i+kz) 
\Box_{d} \unit_{\{-d\}} 
\end{equation} 
where $d+\ceil{\delta} = (d_i + \ceil{\delta_i})_{i=1}^{m+s}$ 
is an element of $\K_0$. 
%This is close to $I^v_\cV$, but is still different. 
Applying the infinite-rank differential operator 
$\prod_{i=1}^c \prod_{k=1}^{\pair{\txi_i}{\delta}} 
(\lambda + \tbxi_i + kz)^{-1}$ 
to the above element 
(here note that $\pair{\txi_i}{\delta} 
\in \Z_{\ge 0}$ since $\delta\in \hNE_\cX \cap \K_0$), 
we obtain 
\begin{equation*} 
%\label{eq:I^v_lambda}
\bI^v_\lambda(q,z) := e^{\ov{p}\log q/z} 
\sum_{d\in \K_v} 
q^{d+v} Q^{\ov{d+v}} 
\prod_{i=1}^c \prod_{k=1}^{\pair{\txi_i}{d+v}}
(\lambda+ \xi_i+kz) 
\Box_{d} \unit_{\{-d\}}.  
\end{equation*} 
Here we expand $(\lambda + \tbxi_i + kz)^{-1}$ as 
$\sum_{k=0}^\infty \lambda^{-k-1} (-\tbxi_i -kz)^k$. 
Because 
\[
\bI_\lambda = \bF(q) \bJ^{\be_\lambda}
(\tbvarsigma(q;\lambda),z) 
= \bL^{\be_\lambda}(\tbvarsigma(q;\lambda),z)^{-1} 
\bF(q) \unit 
\]
and $\bI^v_\lambda$ is obtained from 
$\bI_\lambda$ by differentiation, 
$\bI^v_\lambda = \bL^{\be_\lambda}
(\tbvarsigma(q;\lambda),z)^{-1} \tbUpsilon_v$ 
for an $H^*_{\rm orb}(\cX)$-valued function 
$\tbUpsilon_v(q,z;\lambda)$ 
which is regular at $z=0$ 
(see the proof of Corollary \ref{cor:Iv_forX}).  
Here $\tbUpsilon_v$ is defined over the ring 
$\C[z](\!(\lambda^{-1})\!)[\![\Eff_\cX]\!][\![\log q_1,
\dots, \log q_r, q_{r+1}^{1/e},\dots,q_{r+s}^{1/e}]\!]$. 
But $\bI^v_\lambda$, $\bL^{\be_\lambda}$, 
$\tbvarsigma(q;\lambda)$ do not contain 
negative powers of $\lambda$, so it follows 
that $\tbUpsilon_v$ is also regular at $\lambda=0$. 
Now the conclusion follows by setting $\lambda =0$, 
$Q=1$. 
\end{proof} 
% \begin{remark} 
% As discussed in \cite{CCLT:wp}, by Reid-Tai criterion, 
% the condition $\sage(v) >1$ implies that 
% the singularity along the sector $\cY \cap \cX_v$ 
% is well-formed and terminal. 
% But our assumption does not quite follow from 
% the condition that $\cY$ itself is terminal 
% since a sector $\cX_v$ may not intersect with $\cY$. 

% % \begin{table}[htbp]
% \begin{tabular}{l l l l l} 
% $\cX $ & $\cV$ & singularity of $\cY$ & $c_1(\cY)$ & assumption \\ 
% \hline  
% $\Proj(1,1,1,3,3)$ & $\cO(9)$ & canonical & CY & yes \\ 
% $\Proj(1,1,1,1,3)$ & $\cO(7)$ & canonical & CY & no \\ 
% $\Proj(1,1,1,1,1,3)$ & $\cO(7)$ & terminal &  Fano & yes \\ 
% $\Proj(1,1,1,1,1,3)$ & $\cO(2) \oplus \cO(6)$ & smooth 
% & CY & no
% \end{tabular} 
% %\end{table}
% \end{remark} 

% \begin{example} 
% Our assumption is not satisfied for a degree 9 
% Calabi-Yau hypersurface in $X_9 \subset 
% \Proj(1,1,1,1,1,4)$ (with a canonical singularity)
% but is satisfied for a Fano 
% hypersurface in $X_7 \subset \Proj(1,1,1,1,1,3)$ 
% (with a terminal singularity). 
% Our assumption is satisfied for 
% Calabi-Yau $X_{9} \subset \Proj(1,1,1,3,3)$.  
% A Calabi-Yau $(2,6)$-complete intersection 
% in $\Proj(1,1,1,1,1,3)$ (no singularities) 
% does not satisfy our assumption. 
%\end{example} 
\begin{remark}
\label{rem:multigen}
Recall that $I^v$ was obtained from $I$ by 
differentiation (see \eqref{eq:Iv_der_I}). 
In the twisted case, in general, 
$I^v_\cV$ cannot be written in the form 
$I^v_\cV = P_v I_\cV$ 
for some differential operator 
$P_v \in \cO_{\tcM}[z]\langle z\partial_1,\dots, z\partial_{r+s} 
\rangle$. 
This means that the twisted quantum $D$-module over 
$H^{\le 2}_{\rm orb}(\cX)$ 
may not be generated by the unit section $\unit$ 
as an $\cO[z]\langle z \partial \rangle$-module,  
where $\partial$ denotes the derivative in 
the $H^{\le 2}_{\rm orb}(\cX)$ direction. 
Differentiating $I_\cV$ by the same differential 
operator as in \eqref{eq:Iv_der_I}, we obtain  
(cf.\ \eqref{eq:der_Ilambda}) 
\begin{equation} 
\label{eq:der_IcV}
q^{-\delta } 
\left( \prod_{i=1}^{m+s} 
\prod_{\nu=0}^{\ceil{\delta_i}-1} 
(\bD_i - \nu z ) \right) 
I_\cV = 
e^{\ov{p}\log q/z} 
\left( \prod_{i=1}^c 
\prod_{k=1}^{\pair{\txi_i}{\delta}} 
(\xi_i + k z ) \unit_v + O(q) \right). 
\end{equation} 
This equals $I_\cV^v$ if $\pair{\txi_i}{\delta}=0$ for all $i$. 
The equalities $\pair{\txi_i}{\delta}=0$ $(\forall i)$ 
can be achieved (for some $\delta$) 
if there exists a cone $\sigma$ 
in $\Sigma$ such that $v\in \sigma$ and 
$v$ is in the monoid generated by 
$\{b_1,\dots,b_{m+s}\} \cap \sigma$. 
On the other hand, if we invert the variable 
$z$, i.e.\ restrict the $D$-module to the complement 
of $z=0$, we can see from \eqref{eq:der_IcV} that 
the twisted quantum $D$-module is still generated 
by $\unit$. 
Such a non-generation phenomenon first appeared 
in the work of Guest-Sakai \cite{Guest-Sakai}. 
\end{remark} 

We remark that one can calculate $L^\be$ and $\tUpsilon_v$ 
from the functions $I^v_\cV$ using the \emph{Birkhoff factorization} 
in the theory of loop groups, 
as observed by Coates-Givental \cite{Coates-Givental} 
and Guest \cite{Guest:D-mod}. 
Using the fact that $H^*(\cX_v)$ is generated by $\unit_v$ 
as a $\C[\ov{p}_1,\dots,\ov{p}_r]$-module, we can 
find differential operators $P_{v,i}(z\partial) \in 
\C[z\partial_1,\dots,z\partial_r]$, $i=1,\dots,l_v$  
(where $\partial_a = q_a (\partial/\partial q_a)$) 
such that $\phi_{v,i} = P_{v,i}(\ov{p}_1,\dots,\ov{p}_r) 
\unit_v$, $v\in \Boxop$, $i=1,\dots,l_v$ 
form a basis of $H^*_{\rm orb}(\cX)$. 
Then by the asymptotics \eqref{eq:Y_asymptotic} 
and the previous theorem, we have 
\[
P_{v,i}(z\partial) I_\cV^v(q,z) = e^{\ov{p}\log q/z} 
(\phi_{v,i} + O(q)) = 
L^\be(\tvarsigma(q),z)^{-1} \tUpsilon_{v,i}(q,z). 
\] 
Here $\tUpsilon_{v,i}(q,z)=
P_{v,i}(z \tvarsigma^*\nabla^{\be}) \tUpsilon_v(q,z)
= \phi_{v,i}+O(q)$ 
is an $H^*_{\rm orb}(\cX)$-valued 
function regular at $z=0$. 
We consider the matrix formed by the column vectors 
$P_v^i(z\partial) I_{\cV}^v$ 
and regard it as an element of the loop group 
with loop parameter $z$.  
Then the above equation shows that 
$(L^\be)^{-1}$ and $(\tUpsilon_{v,i})_{v,i}$ are 
obtained from it by the Birkhoff factorization \cite{Pressley-Segal}.  
Here we use the fact that $L^\be = \id + O(z^{-1})$ and 
$\tUpsilon_{v,i}(q,z)$ is regular at $z=0$. 
This also gives a proof that 
$\tUpsilon_{v}(q,z)$ and $L^\be(\varsigma(q),z)$ 
are analytic near $q=\bzero$. 

Using Proposition \ref{prop:Eulertw_Y}, we get 
the following corollary of Theorem \ref{thm:mirrorthm_tw}. 
\begin{corollary} 
\label{cor:mirrorthm_Y}
Let $L_\cY(\tau,z)$, $J_\cY(\tau,z)$ denote 
the fundamental solution and the $J$-function 
of the untwisted theory of $\cY$. 
Set $\Upsilon_v(q,z) = \iota^* \tUpsilon_v(q,z)$ and 
$\varsigma(q) = \iota^* \tvarsigma(q)$. 
Then we have 
$\iota^*I_\cV^v(q,z) = 
L_\cY(\varsigma(q),z)^{-1} \Upsilon_v(q,z)$ and 
$\iota^* I_\cV(q,z)= F(q) J_\cY(\varsigma(q),z)$. 
\end{corollary}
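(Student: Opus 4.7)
The plan is to deduce this corollary by pulling back the identities of Theorem \ref{thm:mirrorthm_tw} through the inclusion $\iota\colon \cY \hookrightarrow \cX$, converting equivariant-Euler-twisted objects on $\cX$ into untwisted objects on $\cY$ by Proposition \ref{prop:Eulertw_Y}.

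First I would upgrade Proposition \ref{prop:Eulertw_Y} to a statement about the \emph{inverse} fundamental solution. After specializing the Novikov parameter to $Q=1$ (so that the map $H_2(Y;\Z)\to H_2(X;\Z)$ becomes inert), the proposition reads
\[
\iota^* L^\be(\tau,z)\,\alpha = L_\cY(\iota^*\tau, z)\,\iota^*\alpha, \qquad \alpha \in H^*_{\rm orb}(\cX).
\]
Substituting $\alpha = L^\be(\tau,z)^{-1}\beta$ and rearranging gives the key transfer identity
\[
\iota^*\bigl(L^\be(\tau,z)^{-1}\beta\bigr) = L_\cY(\iota^*\tau,z)^{-1}\,\iota^*\beta
\]
for any $\beta\in H^*_{\rm orb}(\cX)$.

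Second, I apply $\iota^*$ to the identity $I_\cV^v(q,z) = L^\be(\tvarsigma(q),z)^{-1}\tUpsilon_v(q,z)$ from Theorem \ref{thm:mirrorthm_tw}, using the transfer identity with $\tau = \tvarsigma(q)$ and $\beta = \tUpsilon_v(q,z)$. This yields
\[
\iota^* I_\cV^v(q,z) = L_\cY(\iota^*\tvarsigma(q),z)^{-1}\,\iota^*\tUpsilon_v(q,z) = L_\cY(\varsigma(q),z)^{-1}\,\Upsilon_v(q,z),
\]
which is the first assertion. For the second assertion, I specialize to $v=0$. By Theorem \ref{thm:mirrorthm_tw}, $\tUpsilon_0(q,z) = F(q)\unit$, so $I_\cV(q,z) = F(q) L^\be(\tvarsigma(q),z)^{-1}\unit = F(q) J^\be(\tvarsigma(q),z)$. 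Since $\iota^*\unit_\cX = \unit_\cY$, applying the transfer identity once more gives
\[
\iota^* I_\cV(q,z) = F(q) L_\cY(\varsigma(q),z)^{-1}\unit = F(q) J_\cY(\varsigma(q),z).
\]

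The corollary is essentially bookkeeping once Theorem \ref{thm:mirrorthm_tw} and Proposition \ref{prop:Eulertw_Y} are in hand, so there is no substantial obstacle. The only delicate point is the Novikov specialization: Proposition \ref{prop:Eulertw_Y} is stated with the substitution $Q^d \mapsto Q^{\iota_*d}$, and one should verify that after setting $Q=1$ on both sides (which is legitimate by the divisor equation since $\xi_1,\dots,\xi_c$ and hence $c_1(\cY)$ are nef, ensuring convergence on the common large-radius neighborhood) the identities descend to honest holomorphic equalities on $\tcM$ rather than merely formal series in the Novikov ring.
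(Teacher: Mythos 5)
Your proof is correct and takes essentially the same route as the paper, which simply notes that the corollary follows by combining Proposition \ref{prop:Eulertw_Y} with Theorem \ref{thm:mirrorthm_tw}; you have merely made explicit the step of inverting $L^\be$ (the transfer identity $\iota^*\bigl(L^\be(\tau,z)^{-1}\beta\bigr) = L_\cY(\iota^*\tau,z)^{-1}\iota^*\beta$) and substituting $\tau=\tvarsigma(q)$, $\beta=\tUpsilon_v(q,z)$, which is exactly the intended bookkeeping.
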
 

\section{Equality of Periods: A-periods $=$ B-periods} 
In this section we show that the A-periods of $\cX$ 
equal ordinary periods 
(or oscillatory integral) of the mirror. 
The key point --- the hypersurface $J$-function is a Laplace
transform of the ambient one and 
the same for mirror oscillatory integrals --- 
had been observed in Givental's paper  
\cite{Givental:mirrorthm-toric} on toric mirror theorem 
and in the Coates-Givental proof \cite{Coates-Givental} 
of quantum Lefschetz.

\subsection{Laplace Transform of A-Periods} 
Let $\cX$ be a toric orbifold 
in Section \ref{subsec:toricorb} and 
$\cY$ be a complete intersection in $\cX$ 
with respect to $\cV = \cL_1\oplus \cdots \oplus\cL_c$ 
in Section \ref{subsec:mirrorthm_comp}. 
Here we show that the Laplace transforms 
of the A-periods of $\cX$ 
give precisely those of $\cY$. 
% We start with a formula of the Laplace transform  
% of $I_\cX(q,z)$ or the quantum cohomology 
% central charge \eqref{eq:centralcharge} of $\cX$. 
%We will restrict the domain of definition 
%of $I_\cX$ to the real locus $\cM_\R = \Hom(\LL,\R_{>0})$. 
% We choose branch cuts of $q^d$ (with $d\in \LL_\Q$)  
% and $\log q_a$ on $\cM_\R$ 
% so that $q^d, \log q_a \in \R_{>0}$. 
% Under these branch cuts, $I(q,z)$, $\tau(q)$ are 
% single-valued on $\cM_\R$. 
We choose a lift $\txi_j\in \LL^*$ of $\xi_j$ 
for $1\le j\le c$ as in Section \ref{subsec:mirrorthm_comp}. 
Then $\txi_j$ defines a one-parameter subgroup 
$\C^\times \ni r \mapsto r^{\txi_i}$ 
of $\cM = \LL^*\otimes \C^\times$. 
In co-ordinates, 
$r^{\txi_j}= (r^{v_{j1}},\dots, r^{v_{j,r+s}})$ 
when we set $\txi_j = \sum_{a=1}^{r+s} v_{ja} p_a$. 
By the formula \eqref{eq:A-period2} 
and Corollary \ref{cor:Iv_forX}, 
the A-period $\Pi(\theta_v,\cE)$ of $\cX$ 
is given by  
\begin{equation} 
\label{eq:Aperiod_Iv} 
\Pi(\theta_v,\cE)(\tau(q),z)  =  
\left( I^v(q,-z), \; z^{n-\frac{\deg}{2}} z^{\rho}
\Psi(\cE) \right)_{\rm orb}.  
\end{equation} 
Here $\theta_v$ is the section\footnote
{More precisely, $\theta_v$ is a section 
of $\tau^*QDM(\cX)$. The A-period $\Pi(\theta_v,\cE)$ 
should be understood as the pairing 
of $\theta_v(q,-z)$ and $(\tau^* \frs(\cE))(q,z)$ 
in the pulled-back quantum $D$-module.} 
of the quantum $D$-module $QDM(\cX)$ 
of $\cX$ in Corollary \ref{cor:Iv_forX}. 
Define the (partial) Laplace transform 
$\hPi(\phi,\cE)$ by
\[
\hPi(\phi,\cE)(q,s,z) = 
(\prod_{j=1}^c s_j)
\int_0^\infty \dots \int_0^\infty 
\Pi(\phi,\cE)\left(
\tau (
{\textstyle\prod_{j=1}^c} (zr_j)^{\txi_j} \cdot q
), z\right) 
e^{-\sum_{j=1}^c r_j s_j} 
dr_1 \cdots dr_c.  
\]
where $s=(s_1,\dots, s_c)\in (\R_{>0})^c$. 
Note that $\Pi(\phi,\cE)(\tau(q),z)$, 
$\hPi(\phi,\cE)(q,s,z)$ are 
multi-valued. 
We can regard them as a single-valued function 
in $(\log q_1,\dots,\log q_{r+s},\log z)$. 
 
\begin{proposition}
\label{prop:Laplacetrans} 
For $\cE\in K(\cX)$ and $v\in \Boxop$, we have 
\begin{align*}
%\label{eq:Laplacetrans} 
\hPi(\theta_v,\cE)(q,s,z) = 
\left (
I_\cV^v\left(q'_s,
%\textstyle\prod_{j=1}^c (e^{\pi\iu}/s_j)^{\txi_j}\cdot q,
-z\right ), \; 
z^{n-\frac{\deg}{2}} 
z^{\rho_\cY}
\Psi_\cV(\cE)
\right )_{\rm orb} 
\end{align*}
where we set $\Psi_\cV(\cE) := e^{\pi\iu c_1(\cV)} \hGamma(\cV^\vee) 
\cup \Psi(\cE)$, $\rho_\cY :=c_1(\cY) = \rho -\sum_{j=1}^c \xi_j$ and 
\[ q'_s = \prod_{j=1}^c 
(e^{\pi\iu}s_j^{-1})^{\txi_j} \cdot q. 
\quad \text{i.e.\ } 
\log q_{s,a}' = \log q_a 
+ \sum_{j=1}^c (\pi \iu - \log s_j)v_{ja}, \ \log s_j>0. 
\] 
% where we set 
% \[
% \Psi_\cV(\cE) := \left(
% \bigoplus_{v\in \Boxop} 
% \prod_{j=1}^c  
% e^{\pi\iu \xi_j} 
% \Gamma(1 - \xi_j) \unit_{v}
% \right) \cup \Psi(\cE). 
% \]
% and $\deg_\cY$ is the superage grading 
% in Section \ref{subsec:mirrorthm_comp}.  
Note that the right-hand side also gives the analytic 
continuation of $\hPi(\theta_v,\cE)$ in $s$.
\end{proposition}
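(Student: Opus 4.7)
The plan is to substitute the formula $\Pi(\theta_v,\cE)(\tau(q),z) = (I^v(q,-z), z^{n-\frac{\deg}{2}} z^{\rho}\Psi(\cE))_{\rm orb}$ from \eqref{eq:Aperiod_Iv} into the defining integral for $\hPi$ and perform the $r_j$-integrals term-by-term in the explicit series expansion of $I^v$. Under the substitution $q\mapsto \prod_j(zr_j)^{\txi_j}q$, the prefactor $e^{-\ov{p}\log q/z}$ picks up $\prod_j (zr_j)^{-\xi_j/z}$ and each series term $q^{d+v}$ is multiplied by $\prod_j(zr_j)^{n_j}$, where $n_j:=\pair{\txi_j}{d+v}\in \Z_{\ge 0}$ (nonnegative since $\txi_j$ is extended nef and $d+v\in \hNE_\cX$ for nonvanishing terms). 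The $r_j$-integral then evaluates, by the Euler integral applied formally in the nilpotent class $\xi_j$, to
\[
s_j\int_0^\infty (zr_j)^{n_j-\xi_j/z}\, e^{-r_j s_j}\, dr_j = (z/s_j)^{n_j-\xi_j/z}\,\Gamma(n_j+1-\xi_j/z).
\]

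The key algebraic identity is
\[
\Gamma(n_j+1-\xi_j/z) = \Gamma(1-\xi_j/z)\prod_{k=1}^{n_j}(k-\xi_j/z) = (-z)^{-n_j}\,\Gamma(1-\xi_j/z)\prod_{k=1}^{n_j}(\xi_j - kz),
\]
which manufactures precisely the factor $\prod_{k=1}^{n_j}(\xi_j-kz)$ that distinguishes $I^v_\cV$ from $I^v$. To rewrite the remaining pieces in the form required on the right-hand side, I would invoke the conjugation identities
\[
z^{-\frac{\deg}{2}}\,\Gamma(1-\xi_j)\, z^{\frac{\deg}{2}} = \Gamma(1-\xi_j/z), \qquad z^{-\frac{\deg}{2}}\, z^{-\xi_j}\, z^{\frac{\deg}{2}} = z^{-\xi_j/z},
\]
which hold because $z^{\frac{\deg}{2}}$ scales any class in $H^{2l}(\cI\cX)$ by $z^l$. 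These conjugations absorb the $\Gamma(1-\xi_j/z)$ produced by the integral into $\Gamma(1-\xi_j)$ inside the pairing (contributing the $\hGamma(\cV^\vee)$-factor of $\Psi_\cV(\cE)$) and convert the operator $z^{\rho}$ into $z^{\rho-\sum_j\xi_j}=z^{\rho_\cY}$ via the $\prod_j z^{-\xi_j/z}$ inherited from the prefactor.

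The remaining scalar bookkeeping should match the target exactly: the powers $s_j^{-n_j}$ and $s_j^{\xi_j/z}$ from the integral combine with the factors $(z/s_j)^{n_j}$ to reconstruct $(q'_s)^{d+v}\cdot e^{-\ov{p}\log q'_s/z}$; the sign $(-1)^{n_j}$ from $(-z)^{-n_j}\cdot z^{n_j}$ realizes the $e^{\pi\iu}$ in $q'_s=\prod_j(e^{\pi\iu}s_j^{-1})^{\txi_j}q$; and a compensating phase $e^{\pi\iu\xi_j}$, supplied by $s_j^{\xi_j/z}$ passing through $z^{-\frac{\deg}{2}}$, matches the $e^{\pi\iu c_1(\cV)}$ prefactor in $\Psi_\cV(\cE)$. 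The main obstacle is exactly this bookkeeping: carefully tracking how each $\xi_j$-valued factor moves between the two entries of the pairing under conjugation by $z^{-\frac{\deg}{2}}$, so that the phases, powers of $z$ and $s_j$, and the re-packaging of $\Gamma(1-\xi_j)$ and $e^{\pi\iu\xi_j}$ all conspire into the single operator $z^{n-\frac{\deg}{2}}z^{\rho_\cY}\Psi_\cV(\cE)$. A secondary technical issue is to justify the interchange of the Laplace integrals with the infinite sum over $d\in \K_v$; this is routine in a neighborhood of the large radius limit because only finitely many $d$ contribute to each cohomological degree and the damping $e^{-r_j s_j}$ gives absolute convergence of each Laplace integral individually, so that the identified series representation on the right-hand side also furnishes the claimed analytic continuation in $s$.
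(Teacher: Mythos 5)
Your proposal follows essentially the same route as the paper's proof: substitute \eqref{eq:Aperiod_Iv} into the defining Laplace integral, push the flow $q\mapsto\prod_j(zr_j)^{\txi_j}q$ through the series for $I^v(q,-z)$, evaluate the Euler--Gamma integral term by term, use $\Gamma(n_j+1-\xi_j/z)=\Gamma(1-\xi_j/z)\prod_{k=1}^{n_j}(k-\xi_j/z)$ to extract the $\prod_k(\xi_j-kz)$ factors, and absorb the leftover $\prod_j e^{(\pi\iu-\log z)\xi_j/z}\Gamma(1-\xi_j/z)$ into the second slot of the pairing via conjugation by $z^{-\deg/2}$, producing $z^{n-\deg/2}z^{\rho_\cY}\Psi_\cV(\cE)$. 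Your stated justification for interchanging the $r$-integration with the sum over $d\in\K_v$ (``only finitely many $d$ contribute to each cohomological degree'') is false as stated, since infinitely many $d$ share the same $\{-d\}$; the paper instead supplies the explicit estimates $\|\Box_d\|\le C_1 C_2^{|d|}/\langle\hrho,d\rangle!$ and $\|\Box_d\prod_j\Gamma_j\|\le C_1 C_2^{|d|}/\langle\hrho_\cY,d\rangle!$, using the extended nefness of $\hrho$ and $\hrho_\cY$, to justify both the Fubini step and the claimed analytic continuation in $s$.
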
 
\begin{proof} 
First we calculate the Laplace transform of $I^v(q,-z)$. 
Writing $I^v(q,-z) = e^{-\ov{p}\log q/z} \sum_{d\in\K_v} 
q^{d+v} \ov{\Box}_d \unit_{\{-d\}}$, we have 
\begin{align*} 
& \int_0^\infty \cdots \int_0^\infty   
I^v\left(\textstyle \prod (r_j z)^{\txi_j} \cdot q, -z\right) 
e^{-\sum_{j=1}^c r_j s_j} dr_1\cdots dr_c  \\ 
& 
= \sum_{d\in \K_v, d+v\in \hNE_\cX} 
e^{-\ov{p} \log q/z} q^{d+v}  \ov{\Box}_d
\prod_{j=1}^c  \int_0^\infty 
e^{-\xi_j \log(r_j z)/z} (r_j z)^{\pair{\txi_j}{d+v}} 
e^{-r_j s_j} d r_j  \unit_{\{-d\}} \\ 
& 
= \sum_{d \in \K_v, d+v\in \hNE_\cX} 
e^{-\ov{p}\log q/z} q^{d+v} \ov{\Box}_d 
\prod_{j=1}^c \left(z/s_j\right)
^{\pair{\txi_j}{d+v}-\frac{\xi_j}{z}}s_j^{-1} 
\Gamma\left (1+ 
\pair{\txi_j}{d+v} - \tfrac{\xi_j}{z} \right)  
\unit_{\{-d\}}.  
\end{align*} 
Using $\Gamma(1+x) = x \Gamma(x)$ and 
$\pair{\txi_j}{d+v} \in \Z_{\ge 0}$, 
we find that this is $(s_1\cdots s_c)^{-1}$ times 
\begin{align*} 
\prod_{j=1}^c 
e^{(\pi\iu -\log z) \xi_j/z} 
\Gamma\left(1- \xi_j/z \right) 
\cdot  
I_\cV^v\left(
\textstyle \prod_{j=1}^c (e^{\pi\iu}/s_j)^{\txi_j} 
\cdot q, -z\right).  
\end{align*} 
The conclusion now easily follows 
from this and \eqref{eq:Aperiod_Iv}. 
\end{proof} 
 
\begin{remark} 
The integral in $r_j$ yielding the factor 
$\Gamma_j = \Gamma(1+ \pair{\txi_j}{d+v} -\xi_j/z)$ in 
the above calculation 
should be understood as a vector-valued integration. 
The exchange of sum and integral  
is justified by the estimates  
$\|\Box_d\| \le C_1 C_2^{|d|}/\pair{\hrho}{d}!$ 
and $\|\Box_d \prod_{j=1}^c \Gamma_j\|
\le C_1 C_2^{|d|}/\pair{\hrho_\cY}{d}!$ 
for some $C_1,C_2>0$. 
Here $|d| = \sum_{a=1}^{r+s}\pair{p_a}{d}$ and $\|\cdot \|$ 
is the operator norm with respect to some norm 
on $H_{\rm orb}^*(\cX)$. 
Note that we need the fact that $\hrho, \hrho_\cY$ 
are extended nef. 
\end{remark} 

\begin{remark} 
We can view the map $\Psi_\cV(\cE)$ as defining 
a dual\footnote{Because the Euler twisted theory has 
a degenerate pairing, we have to distinguish 
the $\hGamma$-integral structure from its dual: 
the integral structure itself should be given by 
$\Psi^\cV(\cE) = \hGamma(\cV)^{-1} \Psi(\cE)$.} 
integral structure of the Euler twisted theory. 
\end{remark} 

Using the mirror theorem (Corollary \ref{cor:mirrorthm_Y}) 
we obtain the following corollary. 
\begin{corollary} 
\label{cor:Laplace} 
For an algebraic vector bundle $\cE$ on $\cY$, 
we have  
\[
(2\pi\iu)^c 
\Pi_\cY(\Upsilon_v,\cE)\left( 
\varsigma \left(\textstyle
q'_s \right),z
\right) 
= 
\hPi(\theta_v,\iota_* \cE)(q,s,z), 
\]
where $\Pi_\cY(\Upsilon_v, \cE)$ denotes the A-period 
\eqref{eq:A-period} for $\cY$ and 
$\Upsilon_v$ is the section of the quantum $D$-module 
of $\cY$ appearing in Corollary \ref{cor:mirrorthm_Y} 
and \eqref{eq:Upsilon}.  
\end{corollary}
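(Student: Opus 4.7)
The plan is to chain together Proposition \ref{prop:Laplacetrans} applied to the pushed-forward class $\iota_*\cE \in K(\cX)$, Corollary \ref{cor:mirrorthm_Y}, and the projection formula $(\alpha,\iota_*\beta)_{{\rm orb},\cX} = (\iota^*\alpha,\beta)_{{\rm orb},\cY}$. Proposition \ref{prop:Laplacetrans} immediately gives
\[
\hPi(\theta_v,\iota_*\cE)(q,s,z)
= \bigl(I_\cV^v(q'_s,-z),\; z^{n-\frac{\deg}{2}} z^{\rho_\cY}\, \Psi_\cV(\iota_*\cE)\bigr)_{{\rm orb},\cX},
\]
and the objective is to recognise this as $(2\pi\iu)^c$ times the analogue of \eqref{eq:A-period2} for the A-period on $\cY$. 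The pivotal step is to establish the Riemann--Roch-type identity
\begin{equation*}
\Psi_\cV(\iota_*\cE) \;=\; (2\pi\iu)^c\, \iota_*\Psi_\cY(\cE)
\quad \text{in } H^*(\cI\cX).
\end{equation*}
Granted this identity, one uses $\dim\cY = n-c$, $\iota^*\rho_\cY = \rho_\cY$, and the fact that $z^{n-\frac{\deg}{2}}\iota_*(\alpha) = \iota_*(z^{(n-c)-\frac{\deg}{2}}\alpha)$ (since $\iota_*$ raises underlying cohomological degree by $2c$), so the projection formula converts the pairing to $\cI\cY$. Substituting $\iota^*I_\cV^v(q,-z) = L_\cY(\varsigma(q),-z)^{-1}\Upsilon_v(q,-z)$ from Corollary \ref{cor:mirrorthm_Y} and invoking \eqref{eq:A-period2} on $\cY$ then yields $(2\pi\iu)^c\Pi_\cY(\Upsilon_v,\cE)(\varsigma(q'_s),z)$, as required.

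The bulk of the work is the identity $\Psi_\cV(\iota_*\cE) = (2\pi\iu)^c\,\iota_*\Psi_\cY(\cE)$, which I would prove by direct manipulation using three ingredients. First, because $\cV$ is pulled back from the coarse moduli $X$, the stabilizer acts trivially on $\cV|_{\cX_v}$ along every twisted sector, yielding the clean decomposition $\iota^*\hGamma_\cX = \hGamma_\cY\cup\hGamma(\cV|_\cY)$ on $\cI\cY$. Second, the bundle-level Gamma functional equation (the analogue of \eqref{eq:half} applied to $\cV$, deduced from $\Gamma(1-z)\Gamma(1+z)=\pi z/\sin(\pi z)$) asserts
\[
\hGamma(\cV^\vee)\cup \hGamma(\cV)\cup e^{\pi\iu c_1(\cV)}
\;=\; (2\pi\iu)^{\frac{\deg_0}{2}}\Td(\cV).
\]
Third, orbifold Grothendieck--Riemann--Roch for the regular closed embedding $\iota$, equivalent to the Koszul relation $\iota^*\iota_*\cE = \cE\otimes\lambda_{-1}(\cV^\vee|_\cY)$ in $K$-theory, provides an explicit formula for $\tch(\iota_*\cE)$ involving $\Td(\cV|_\cY)^{-1}$. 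Substituting the three formulas into $\Psi_\cV(\iota_*\cE) = e^{\pi\iu c_1(\cV)}\hGamma(\cV^\vee)\hGamma_\cX\cup(2\pi\iu)^{\frac{\deg_0}{2}}\inv^*\tch(\iota_*\cE)$ and applying the projection formula $\iota_*(\alpha)\cup\beta = \iota_*(\alpha\cup\iota^*\beta)$, the $\Td(\cV|_\cY)$ factors cancel completely, producing $\iota_*\bigl(\hGamma_\cY\cup(2\pi\iu)^{\frac{\deg_0}{2}}\inv^*\tch(\cE)\bigr) = \iota_*\Psi_\cY(\cE)$. The numerical prefactor $(2\pi\iu)^c$ arises from the commutation $(2\pi\iu)^{\frac{\deg_0}{2}}\iota_* = (2\pi\iu)^c\,\iota_*(2\pi\iu)^{\frac{\deg_0}{2}}$, reflecting the degree shift by $2c$ under $\iota_*$.

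The principal obstacle is the orbifold verification of the Koszul/GRR step on each pair of inertial components $\cY_v\hookrightarrow\cX_v$, and checking that it interacts correctly with the eigenvalue decomposition defining $\tch$ and $\hGamma$. The hypothesis that $\cV$ is untwisted along stabilizers is decisive here: only the $f=0$ eigenpiece of $\pr^*\cV$ contributes to the normal bundle of $\cI\iota$, which is therefore of uniform codimension $c$, and the orbifold identity reduces to the manifold one componentwise. Once this verification is in place, the remainder of the proof is the formal assembly via projection formula and Corollary \ref{cor:mirrorthm_Y} described above.
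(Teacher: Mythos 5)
Your proposal is correct and follows essentially the same route as the paper's proof: the pivotal identity $\Psi_\cV(\iota_*\cE) = (2\pi\iu)^c\,\iota_*\Psi_\cY(\cE)$ is derived from To\"en's Grothendieck--Riemann--Roch, the decomposition $\iota^*\hGamma_\cX = \hGamma_\cY \cup \iota^*\hGamma(\cV)$, and the reflection formula $\Gamma(1-x)\Gamma(1+x)=\pi x/\sin(\pi x)$, and then combined with Proposition \ref{prop:Laplacetrans}, Corollary \ref{cor:mirrorthm_Y}, and the projection formula for the orbifold Poincar\'e pairing. The paper's proof is simply a more compressed version of the same argument; you correctly identify the role of the hypothesis that $\cV$ is pulled back from the coarse space (so the stabilizer action is trivial along twisted sectors and the normal bundle of $\cI\iota$ has uniform codimension $c$), which the paper uses implicitly.
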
 
\begin{proof} 
By Toen's Grothendieck-Riemann-Roch \cite{Toen} 
we have 
\begin{equation} 
\label{eq:Toen_pushforward} 
\tch(\iota_* \cE) =\iota_* \left(
\prod_{j=1}^c \frac{1-e^{-\xi_j}}{\xi_j} 
\cdot\tch(\cE) 
\right). 
\end{equation} 
Using this, $\iota^* \hGamma_\cX = 
\hGamma_\cY \cup \iota^* \hGamma(\cV)$ 
and $\Gamma(1-x) \Gamma(1+x) = \pi x/\sin (\pi x)$, we find  
\[
\Psi_\cV(\iota_* \cE) 
%(2\pi\iu)^c 
%\iota_*\left(\hGamma_\cY \cup 
%(2\pi\iu)^{\frac{\deg_0}{2}} \inv^* \tch(\iota^* V)) \right) 
= (2\pi\iu)^c \iota_* \Psi_{\cY}(\cE). 
\] 
Here $\Psi_\cY$ denotes the map $\Psi$ \eqref{eq:Psi} 
for $\cY$. 
The conclusion follows from the previous 
proposition and Corollary \ref{cor:mirrorthm_Y}. 
\end{proof}

\subsection{Mirror Construction} 
\label{subsec:mirrorconst} 
In toric geometry, various mirror constructions have been 
found by Batyrev \cite{Batyrev:dual}, 
Batyrev-Borisov \cite{Batyrev-Borisov:comp}, 
Givental \cite{Givental:ICM} and  
Hori-Vafa \cite{Hori-Vafa:mirrorsym}. 
Following Givental and Hori-Vafa, 
we shall construct mirrors for 
nef complete intersections in toric orbifolds
as Landau-Ginzburg models. 

Let $\cX$ be a toric orbifold in Section \ref{subsec:toricorb}. 
A \emph{nef partition} is 
a partition $\{1,\dots,m\} = I_0 \sqcup I_1 \sqcup 
\cdots \sqcup I_c$ such that 
$\xi_j := \sum_{i\in I_j} \ov{D}_i$ is nef for all 
$0\le j\le c$ and that $\xi_1,\dots,\xi_c$ are pulled 
back from the coarse moduli space $X$, i.e.\  
they are Cartier divisors on $X$. 
This is a special case of the situation 
in Section \ref{subsec:mirrorthm_comp}. 
In the case of the original nef partition due to 
Batyrev-Borisov \cite{Batyrev-Borisov:comp}, 
$I_0$ is assumed to be empty. 
We need not to assume that 
$\xi_0$ is Cartier on $X$. 
As before, we assume the existence of 
a quasi-smooth complete intersection 
$\cY \subset \cX$ with respect to 
$\cV = \cL_1\oplus \cdots \oplus \cL_c$ where $\cL_i$ 
is the line bundle in the class $\xi_i \in \Pic(\cX)$.  
Let $\varphi_j \colon \bN_\R \to \R$ be the piecewise 
linear function defined by $\varphi_j(b_i) = 1$ 
for $i\in I_j$ and $\varphi_j(b_i) = 0$ for $i \in 
\{1,\dots,m\} \setminus I_j$. 
By the assumption $\varphi_j(b_i)$ is $0$ or $1$ 
for all $1\le i\le m+s$ and $j\ge 1$.  
We set $\hI_j := \{1\le i\le m+s\,|\, \varphi_j(b_i)=1\}$ 
for $j\ge 1$. 
The sets $\hI_1,\dots,\hI_c$ are mutually disjoint. 
Set $\hI_0 := \{1,\dots,m+s\} \setminus 
\bigcup_{j=1}^c \hI_j$. 

Consider the torus $\chT := \Hom(\bN,\C^\times) 
= \bM \otimes_\Z \C^\times$. 
Let $t$ denote a point on $\chT$. 
Each element $b \in \bN$ defines a function 
$t^b \colon \chT \to \C^\times$. 
Take $\alpha=(\alpha_1,\dots,\alpha_{m+s})$ 
in $(\C^\times)^{m+s}$ 
and define the Laurent polynomials 
$W^{(0)}_\alpha(t),\dots, W^{(c)}_\alpha(t)$ on $\chT$ as   
$
W_\alpha^{(j)}(t) = \sum_{i\in \hI_j} \alpha_{i} t^{b_i}$. 
A mirror of $\cY$ is given by the complete intersection 
in $\chT$ 
\[
\chY_{\alpha} = \{t\in \chT\;|\; 
W_\alpha^{(1)}(t) = \cdots = W_\alpha^{(c)}(t) =1\} 
\]
endowed with a holomorphic function 
$W_\alpha^{(0)} \colon \chY_\alpha \to \C$. 
The pair $(\chY_\alpha, W_\alpha^{(0)})$ is called 
the \emph{Landau-Ginzburg model}. 
We assume that $\chY_\alpha$ is a non-empty 
smooth complete intersection for generic $\alpha$. 
The translation of the torus $\chT$ 
induces the $\chT$-action on the parameter space: 
$\alpha \mapsto t\cdot \alpha := 
(t^{b_1} \alpha_1,\dots,t^{b_{m+s}} \alpha_{m+s})$. 
Then 
$(\chY_\alpha, W_\alpha^{(0)}) \cong 
(\chY_{t\cdot \alpha}, W_{t\cdot \alpha}^{(0)})$. 
Therefore the parameter space of the mirror family 
descends to $\cM$ (in Section \ref{subsec:mirrortoric}) 
via the exact sequence 
(the divisor sequence tensored with $\C^\times$): 
\begin{align*} 
\begin{CD}
1 @>>> \chT = \bM \otimes_\Z \C^\times 
@>>> (\C^\times)^{m+s} @>>> \cM =\LL^*\otimes_{\Z} \C^\times 
@>>> 1.  
\end{CD} 
\end{align*} 
%We write $[\alpha]$ the image of $\alpha$ in $\cM$. 
In \cite{Iritani:int} we considered the mirror of 
a toric orbifold $\cX$ itself. In this case 
$I_0=\{1,\dots,m\}$ and the mirror is 
the family of functions 
$\sum_{j=1}^{m+s} \alpha_j t^{b_j} \colon \chT \to \C$. 

\begin{remark} 
\label{rem:compactification} 
Batyrev and Borisov  
\cite{Batyrev-Borisov:comp} dealt with the 
case where $I_0$ is empty. 
In this case $\cY$ is Calabi-Yau. 
They considered a Calabi-Yau 
compactification of $\chY_\alpha$ 
in a toric variety $\widehat{\Proj}_\nabla$. 
Here $\widehat{\Proj}_\nabla$ is a 
crepant partial resolution of the toric variety $\Proj_\nabla$ 
associated with the polytope 
$\nabla = \nabla_1 + \cdots + \nabla_c\subset \bN_\R$,   
where $\nabla_i$ is the convex hull of 
$\{b_j\;|\; j\in I_i\}\cup \{0\}$. 
It would be interesting 
to find a partial compactification 
of $(\chY_\alpha, W^{(0)}_\alpha)$ 
with good topological properties. 
\end{remark} 
\begin{remark} 
We hope that the existence of a quasi-smooth 
complete intersection $\cY$ and 
that of a smooth complete intersection 
$\chY_\alpha$ are related. 
In the Batyrev-Borisov construction \cite{Batyrev-Borisov:comp}, 
it was shown that a general complete intersection 
$\cY$ is quasi-smooth if and only if 
the compactification of a 
general $\chY_\alpha$ is quasi-smooth. 
\end{remark} 

\subsection{A-Periods and B-Periods} 
\label{subsec:periodcal} 
Take $\C^\times$-co-ordinates $(t_1,\dots,t_n)$ on 
$\chT$ associated with a basis of $\bN$. 
Define a holomorphic volume form $\Omega_\alpha$ 
on $\chY_\alpha$ by 
\[
\Omega_\alpha = 
\frac{\frac{dt_1}{t_1} \wedge 
\cdots \wedge \frac{dt_n}{t_n}}
{dW^{(1)}_\alpha \wedge \cdots \wedge dW^{(c)}_\alpha}.  
% = \Res_{\chY_\alpha} \left[ 
% \frac{\frac{dt_1}{t_1}\wedge \cdots \wedge \frac{dt_n}{t_n}}
% {W^{(1)}_\alpha-1, \dots, W^{(c)}_\alpha-1} 
% \right].  
\]
We shall consider the following oscillatory integral 
(B-periods): 
\begin{equation} 
\label{eq:oscint}
\int_{\Gamma(\alpha)} \phi(t) 
e^{-W_\alpha^{(0)}(t)/z} \Omega_\alpha   
\end{equation} 
for a Laurent polynomial $\phi\colon \chT \to \C$ 
and a possibly noncompact cycle $\Gamma(\alpha) 
\subset \chY_\alpha$ such that 
$\Re(W_\alpha^{(0)}(t)/z) \to \infty$ 
in the end of $\Gamma(\alpha)$.  
More generally, for 
$\vec{k}=(k_1,\dots,k_c)\in (\Z_{\ge 0})^c$, 
we introduce the residue symbol 
\[
\Osc(\phi,\vec{k};\alpha) = 
\left(\textstyle\prod_{j=1}^c z^{k_j} k_j!\right)
\Res_{\chY_\alpha} \left(
\frac{\phi(t) e^{-W_\alpha^{(0)}(t)/z}\, 
\frac{dt_1}{t_1} \wedge \cdots \wedge \frac{dt_n}{t_n}
} 
{\prod_{j=1}^c (W^{(j)}_\alpha(t) -1)^{k_j+1}} 
\right)
\]
and define the ``oscillatory" residue integral 
\begin{equation}
\label{eq:oscresint}
\int_{\Gamma(\alpha)} \Osc(\phi,\vec{k};\alpha) 
= \frac{\prod_{j=1}^c z^{k_j} k_j!}{(2\pi\iu)^c} 
\int_{T(\Gamma(\alpha))}
\frac{\phi(t) e^{-W_\alpha^{(0)}(t)/z} \,
\frac{dt_1}{t_1} \wedge \cdots \wedge \frac{dt_n}{t_n}} 
{\prod_{j=1}^c (W_\alpha^{(j)}(t) -1)^{k_j+1}}. 
\end{equation} 
Here $T(\Gamma(\alpha))\subset \chT$ 
is a cycle given as follows: 
Take a small tubular neighbourhood $N$ 
of $\chY_\alpha$ in $\chT$. Then  
$N\setminus \bigcup_{j=1}^c (W_\alpha^{(j)})^{-1}(1)$ 
has a deformation retraction to an  
$(S^1)^c$-bundle $T \to \chY_\alpha$.  
We take $T(\Gamma(\alpha))$ to be the total space of 
$T|_{\Gamma(\alpha)}$. 
Note that \eqref{eq:oscresint} equals \eqref{eq:oscint} 
when $\vec{k}=0$.  

In this section we consider the integral over 
the \emph{real locus}. 
Set\footnote
{Note that this does not depend on the choice of 
co-ordinates $t_i$ on $\chT$.} 
$\chT_\R := \Hom(\bN,\R_{>0}) = \{(t_1,\dots, t_n)\,|\, 
t_i >0 \, (\forall i) \}$. 
When $\alpha\in (\R_{>0})^{m+s}$, 
we can define the real cycle $\Gamma_\R(\alpha)$ 
in $\chY_\alpha$ by  
$\Gamma_\R(\alpha) := \chY_\alpha \cap \chT_\R$.   
Similarly we define $\cM_\R := \Hom(\LL, \R_{>0}) 
\subset \cM$. 
% Using the inquality $\beta_1 x_1 + \cdots + \beta_k x_k 
% \ge x_1^{\beta_1} \cdots x_k^{\beta_k}$ for 
% $\beta_i\ge 0$, $\sum_{i=1}^k \beta_i = 1$, $x_i>0$, 
For $\alpha \in (\R_{>0})^{m+s}$, 
we have the estimate\footnote
{Use the fact that $0$ is in the interior of $\Delta$  
and the inequality
$\beta_1 x_1 + \cdots + \beta_k x_k 
\ge x_1^{\beta_1} \cdots x_k^{\beta_k}$
for $x_i>0$, $\beta_i\ge 0$, 
$\sum_{i=1}^k \beta_i = 1$.}: 
\[
\sum_{j=0}^c W^{(j)}_\alpha (t)
= \sum_{i=1}^{m+s} \alpha_i t^{b_i} \ge 
\epsilon(\alpha) \max_{1\le i\le n}
\left\{ t_i^{1/N}, t_i^{-1/N} \right\}
\qquad \forall  t \in \chT_{\R}    
\]
for some $\epsilon(\alpha) >0$ and $N\in \N$. 
Restricting this to $\Gamma_\R(\alpha) = \chT_\R \cap \chY_\alpha$, 
we get $W^{(0)}_\alpha(t)+c\ge \epsilon(\alpha) \max_{1\le i\le n}
\{ t_i^{1/N}, t_i^{-1/N} \}$. 
% Hence $W^{(0)}_\alpha(t)+c$ also satisfies 
% the same estimate on $\Gamma_\R(\alpha)
% \subset \chY_\alpha$. 
Consider the integrals 
\eqref{eq:oscint}, \eqref{eq:oscresint} 
with $\Re(z)>0$, $\Gamma(\alpha)= \Gamma_\R(\alpha)$ 
and $\alpha\in (\R_{>0})^{m+c}$.  
Take $\Proj^n$ as a compactification of $\chT$.  
Then the convergence of the integral \eqref{eq:oscint}
is ensured by the exponential factor 
$e^{-W^{(0)}_\alpha(t)/z}$ because $\phi(t) \Omega_\alpha$ 
grows at most polynomially near the infinity $\Proj^n 
\setminus \chT$.   
By taking $T(\Gamma_\R(\alpha))$ 
to be a semi-algebraic cycle   
(as in \cite[Appendix]{Pham:GM}) 
which is sufficiently close to $\Gamma_\R(\alpha)$, 
we have the convergence of \eqref{eq:oscresint} 
similarly. 

For $v\in \Boxop$, we set 
$\alpha^v := \prod_{j\in \sigma} \alpha_j^{c_j}$ when 
$v = \sum_{j\in \sigma} c_j b_j$ for some 
cone $\sigma$ and $c_j\in [0,1)$. 
The following is the first main theorem of this paper.  
\begin{theorem} 
\label{thm:Aperiod=Bperiod} 
Let $\cY$ be a toric complete intersection in 
Section \ref{subsec:mirrorconst}. 
The A-periods \eqref{eq:A-period} of the structure sheaf $\cO_\cY$ 
equal the oscillatory residue integrals 
over $\Gamma_\R(\alpha)$.  
\[
\Pi_\cY(\Upsilon_v,\cO_\cY)(\varsigma(q),z) = 
\int_{\Gamma_\R(\alpha)} 
\Osc(\alpha^v t^v, \vec{\varphi}(v);\alpha). 
\]
Here $\vec{\varphi}(v) = (\varphi_1(v),\dots, \varphi_c(v))$, 
$v\in \Boxop$,  $\alpha \in (\R_{>0})^{m+s}$,  
$q = [\alpha]\in \cM_\R$, $z>0$ and 
the functions $\varsigma(q)$, 
$\Upsilon_v(q)$ are as in Corollary \ref{cor:mirrorthm_Y} 
(see also \eqref{eq:Y_mirrormap}, \eqref{eq:Upsilon}). 
In particular, the quantum cohomology central charge 
$Z_\cY(\cO_\cY)=(2\pi\iu)^{-\dim \cY}
\Pi_\cY(\unit,\cO_\cY)$ is given by 
\[
Z_\cY(\cO_\cY)(\varsigma(q),z) 
= \frac{1}{(2\pi\iu)^{\dim \cY}F(q)} 
\int_{\Gamma_\R(\alpha)} 
e^{-W^{(0)}_\alpha(t)/z} \Omega_\alpha. 
\]
Moreover, the A-period $\Pi_\cY(\Upsilon_v, \iota^*\cE)$ for 
$\cE\in K(\cX)$ is in the $\Z$-span of  
the monodromy transforms of 
$\Pi_\cY(\Upsilon_v, \cO_\cY)$ 
with respect to the monodromy around $q=\bzero$. 
\end{theorem}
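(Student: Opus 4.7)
The plan is to combine Corollary \ref{cor:Laplace} --- which writes each A-period of $\cY$ at $\varsigma(q'_s)$ as a partial Laplace transform in the variables $s_1,\dots,s_c$ of the corresponding A-period of $\cX$ at $\tau(q)$ --- with the main result of \cite{Iritani:int}, which represents the A-periods of $\cX$ as oscillatory integrals of the Laurent potential $W_\alpha(t) = \sum_{i=1}^{m+s}\alpha_i t^{b_i}$ over the real positive torus $\chT_\R$. Carrying out the Laplace integrals in $r_1,\dots,r_c$ should transform the oscillatory integral on $\chT_\R$ into an oscillatory residue integral on $\Gamma_\R(\alpha) = \chY_\alpha \cap \chT_\R$, exactly as claimed.

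Concretely, the substitution $\alpha_i \mapsto zr_j\,\alpha_i$ for $i \in \hI_j$ and $j \ge 1$ realises the scaling $q \mapsto \prod_{j=1}^c (zr_j)^{\txi_j}\cdot q$ of Proposition \ref{prop:Laplacetrans}, and implements $W_\alpha(t)/z \mapsto W^{(0)}_\alpha(t)/z + \sum_j r_j W^{(j)}_\alpha(t)$ while multiplying $\alpha^v$ by $\prod_j (zr_j)^{\varphi_j(v)}$. The elementary identity $\int_0^\infty r_j^{\varphi_j(v)}\, e^{-r_j(W^{(j)}_\alpha + s_j)}\, dr_j = \varphi_j(v)!\,(W^{(j)}_\alpha + s_j)^{-\varphi_j(v)-1}$, valid for $\Re s_j > 0$, then produces an integral over $\chT_\R$ whose kernel is proportional to $\alpha^v t^v e^{-W^{(0)}_\alpha/z}\prod_j (W^{(j)}_\alpha + s_j)^{-\varphi_j(v)-1}$ with prefactor $\prod_j z^{\varphi_j(v)}\varphi_j(v)!$. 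For $\cE = \cO_\cY$ the Grothendieck-Riemann-Roch identity $\Psi_\cV(\iota_*\cO_\cY) = (2\pi\iu)^c\iota_*\Psi_\cY(\cO_\cY)$ from the proof of Corollary \ref{cor:Laplace} absorbs all characteristic-class factors, so only this bare kernel survives.

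The main obstacle will be analytic continuation in $s$ from the convergence region $\Re s_j > 0$ to $s_j = -1$, which is the value forcing $q'_s = q$. At $s_j = -1$ the integrand develops poles along $\{W^{(j)}_\alpha(t) = 1\} = \chY_\alpha$, and the correct analytic continuation is obtained by deforming $\chT_\R$ off these poles onto the tubular cycle $T(\Gamma_\R(\alpha))$ of \eqref{eq:oscresint}; convergence and the legitimacy of the contour manipulation are controlled by the positivity estimate $\sum_{j=0}^c W^{(j)}_\alpha(t) \ge \epsilon(\alpha)\max_i\{t_i^{\pm 1/N}\}$ noted before the theorem. The factor $(\prod_j s_j)|_{s_j = -1} = (-1)^c$ from the Laplace prefactor, the $(2\pi\iu)^c$ on the left of Corollary \ref{cor:Laplace}, and the normalising $(2\pi\iu)^{-c}$ inside the definition of $\Osc$ should then combine to give the stated identity, while the formula for the quantum cohomology central charge $Z_\cY(\cO_\cY)$ is just the $v=0$ specialisation, $\Upsilon_0 = F(q)\unit$.

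The final assertion concerning $\iota^*\cE$ for $\cE \in K(\cX)$ follows from Galois symmetry. For a compact toric Deligne-Mumford stack without generic stabilisers, $K(\cX)$ is $\Z$-generated by line bundles $\cL_\xi$ with $\xi \in H^2(\cX;\Z)$; since $\frs(\cE \otimes \cL_\xi^\vee) = G^\Sol(\xi)\frs(\cE)$ by Definition \ref{def:intstr}(ii) and the Galois automorphism $G^\Sol(\xi)$ is realised as monodromy around $q = \bzero$ via \eqref{eq:Galois}--\eqref{eq:Galois_bundleisom}, each $\Pi_\cY(\Upsilon_v, \iota^*\cL_\xi^{\pm 1})$ is a monodromy transform of $\Pi_\cY(\Upsilon_v, \cO_\cY)$. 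Taking $\Z$-linear combinations then realises $\Pi_\cY(\Upsilon_v, \iota^*\cE)$ for arbitrary $\cE$ in the claimed $\Z$-span.
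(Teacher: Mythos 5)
The proposal's first and last paragraphs align with the paper: the base case is the oscillatory-integral representation of the $\cX$-periods from \cite{Iritani:int}, and the closing Galois/monodromy argument is exactly the paper's. But the middle paragraph — the heart of the proof — contains a genuine gap.

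You integrate out $r_1,\dots,r_c$ to write $\hPi(\theta_v,\cO_\cX)(q,s,z)$ as
$\prod_j s_j z^{\varphi_j(v)}\varphi_j(v)!\int_{\chT_\R}\alpha^v t^v e^{-W^{(0)}_\alpha/z}\prod_j(W^{(j)}_\alpha+s_j)^{-\varphi_j(v)-1}\frac{dt}{t}$,
valid for $\Re s_j>0$, and then propose to analytically continue to $s_j=-1$ and read off the tube integral. This does not work. As $s_j$ follows a path from $\R_{>0}$ to $-1$ avoiding the origin, the pole locus $\{W^{(j)}_\alpha=-s_j\}$ migrates into $\chT_\R$, and the continuation is obtained by pushing $\chT_\R$ \emph{off} the poles (to one side, determined by whether the path went through the upper or lower half-plane). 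That deformed cycle is not the tube $T(\Gamma_\R(\alpha))$. The tube integral arises only as the \emph{discontinuity} between the two branches of the continuation across the cut $\R_{<0}$. In the paper this is done explicitly: the B-side jump is evaluated by the Cauchy integral formula, producing $\prod_j(-2\pi\iu s_j)\prod_j(z\partial_{s_j})^{\varphi_j(v)}P_v(\alpha,s)$ with $P_v(\alpha,s)$ a fiber/tube integral, and then the derivatives are carried out and $s_j$ is set to $1$ (not $-1$).

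This is not just a technicality: the jump is also precisely what reconciles the two $K$-theory inputs you are conflating. Your starting integral represents $\hPi(\theta_v,\cO_\cX)$ (Proposition \ref{prop:Laplacetrans} with $\cE=\cO_\cX$, featuring $\Psi_\cV(\cO_\cX)$), while Corollary \ref{cor:Laplace} with $\cE=\cO_\cY$ produces $\hPi(\theta_v,\iota_*\cO_\cY)$ (featuring $\Psi_\cV(\iota_*\cO_\cY)$); these are different objects. The paper's A-side jump computation shows that the discontinuity multiplies the $I$-function by $\prod_j(e^{-2\pi\iu\xi_j/z}-1)$, which is exactly the ratio (via Toen's GRR, equation \eqref{eq:Toen_pushforward}) between $\Psi_\cV(\cO_\cX)$ and $(-1)^c$ times $\Psi_\cV(\iota_*\cO_\cY)$. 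Without computing the jump you have no bridge from $\cO_\cX$ to $\iota_*\cO_\cY$, and merely invoking $\Psi_\cV(\iota_*\cO_\cY)=(2\pi\iu)^c\iota_*\Psi_\cY(\cO_\cY)$ does not fill it. To repair the argument you would need to (a) compute the jump of your $\chT_\R$-integral across $s_j\in\R_{<0}$ and match it with the tube; and (b) show that the same jump on the A-side, applied to the $I$-function through the multi-valuedness of $\log s_j$ in $q_s'$, converts $\cO_\cX$ to $\iota_*\cO_\cY$ — which is the paper's actual proof.
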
 
\begin{remark} 
% (1) The function $F(q)$ in the right-hand side 
% can be viewed as a normalization factor 
% of the volume form $\Omega_\alpha$. 
% In the case of Calabi-Yau hypersurfaces, 
% we will see in Theorem \ref{thm:} 
% that $F(q)$ itself is an integral period 
% of $\Omega_\alpha$. 
%
The A-periods $\Pi_\cY(\Upsilon_\gamma, \iota^*\cE)$ 
should be written as an oscillatory integral 
over an integral cycle $\Gamma_\cE$ 
which is monodromy-generated by $\Gamma_\R$, 
but we do not know its explicit representative. 
% However we do not try to justify the existence of 
% parallel transports of $\Gamma_\R$ 
% (related to the question raised 
% in Remark \ref{rem:compactification}). 
\end{remark}

\begin{remark} 
By Corollary \ref{cor:mirrorthm_Y}, 
the A-period $\Pi_\cY(\Upsilon_v, \cE)$ for 
$\cE\in K(\cY)$ can be expressed 
in terms of the explicit hypergeometric function $I_\cV^v$: 
\[
\Pi_\cY(\Upsilon_v,\cE)(\varsigma(q),z) = 
\left(\iota^* I_{\cV}^v(q,-z), 
z^{\dim \cY-\frac{\deg}{2}} z^{\rho_\cY} 
\Psi_\cY(\cE)\right)_{\rm orb}. 
\]
Hence theorem \ref{thm:Aperiod=Bperiod} gives equalities of 
oscillatory integrals and hypergeometric series. 
\end{remark} 
\begin{proof}[Proof of Theorem \ref{thm:Aperiod=Bperiod}]
The case $\cY = \cX$ was proved in \cite[Theorem 4.14]{Iritani:int}. 
In this case we have the following: 
\[
\Pi(\unit,\cO_\cX)(\tau(q),z) =  
\int_{\chT_\R} e^{-\sum_{j=0}^c 
W^{(j)}_\alpha(t)/z } 
\frac{dt_1}{t_1}\wedge \cdots \wedge \frac{dt_n}{t_n},  
\]
where $q=[\alpha]\in \cM_\R$, $\alpha \in (\R_{>0})^{m+s}$.   
By \eqref{eq:Iv_der_I} and \eqref{eq:Aperiod_Iv}, 
we know that $\Pi(\theta_v,\cO_\cX)$ can be obtained 
from $\Pi(\unit,\cO_\cX)$ by differentiation. 
Using the fact that the vector field $\bD_i$ there 
is lifted to $z\alpha_i (\partial/\partial \alpha_i)$ 
on the $\alpha$-space, we calculate 
\begin{equation*} 
%\label{eq:toric_osci}
\Pi(\theta_v,\cO_\cX)(\tau(q),z) 
= 
\int_{\chT_\R} 
\alpha^v t^v 
e^{-\sum_{j=0}^c W^{(j)}_\alpha(t)/z} 
\frac{dt_1}{t_1}\wedge \cdots \wedge 
\frac{dt_n}{t_n}.  
\end{equation*} 
By the assumption that $\chY_\alpha$ is smooth 
for generic $\alpha$, we can see that the map 
\[
\vW_\alpha:= (W^{(1)}_\alpha ,\dots ,W_\alpha^{(c)}) \colon 
\chT_\R \longrightarrow (\R_{>0})^c
\]
is generically submersive 
for generic $\alpha\in (\R_{>0})^{m+s}$. 
Hence the above oscillatory integral can be rewritten as  
\begin{align*} 
\Pi(\theta_v,\cO_\cX)(\tau(q),z) &= 
\int_{u \in (\R_{>0})^c}
\prod_{j=1}^c du_j e^{-\sum_{j=1}^c u_j/z} 
P_v(\alpha, u), \\ 
\text{where } \ 
P_v(\alpha, u) &:=  
\int_{\chT_\R \cap \{\vW_\alpha(t) = u\} } 
\alpha^v t^v e^{-W^{(0)}_\alpha(t)/z} 
\frac{\frac{dt_1}{t_1}\wedge \cdots \wedge \frac{dt_n}{t_n}}
{dW_\alpha^{(1)}\wedge \cdots \wedge dW_\alpha^{(r)}}.   
\end{align*} 
Here we set $P_v(\alpha,u)=0$ if $u$ is not in the image of 
$\vW_\alpha$. 
We take the partial Laplace transform of the both-hand sides. 
Under the divisor map $D \colon \Z^{m+s} \to \LL^*$, 
$\txi_j$ can be lifted to the sum $\sum_{i\in \hI_j} e_i \in \Z^{m+s}$. 
Therefore the $c$-dimensional 
flow $q\mapsto \prod_{j=1}^c (zr_j)^{\txi_j}\cdot q$ on $\cM$ 
can be lifted to the flow on the $\alpha$-space 
scaling $W^{(j)}_\alpha$ by $z r_j$ for $1\le j\le c$
and leaving $W^{(0)}_\alpha$ invariant. 
Hence $\hPi(\theta_v,\cO_\cX)(q,s,z)$ 
with $s\in (\R_{>0})^c$ equals 
\begin{align*} 
\left ( \prod_{k=1}^c  
\int_0^\infty
 s_k dr_k \right ) & 
\int_{u\in (\R_{>0})^c}
\left( \prod_{j=1}^c 
du_j 
e^{- (u_j+s_j) r_j} (z r_j)^{\varphi_j(v)}
\right) 
P_v(\alpha,u) \\ 
& = 
\int_{u\in (\R_{>0})^c} 
\left( \prod_{j=1}^c 
\frac{du_j s_j z^{\varphi_j(v)} \varphi_j(v)!}
{(u_j+s_j)^{1+\varphi_j(v)}} \right) 
P_v(\alpha,u). 
\end{align*}
We can change the order of 
the integration by Fubini since the integrand 
can be viewed as a non-negative measure. 
Because $\hPi(\theta_v,\cO_\cX)$ is well-defined 
by Proposition \ref{prop:Laplacetrans}, the integral 
in the right-hand side also converges for $s_j>0$. 
If $s_j \in \C \setminus \R_{\le 0}$, 
there exists a constant $C(s_j)>0$ such that 
\[
\left |\frac{1}{(u_j+s_j)^{1+\varphi_j(v)}} \right | 
\le   \frac{C(s_j)}{(u_j+1)^{1+\varphi_j(v)}} \quad 
\text{for all } u_j>0.      
\] 
Therefore the above integral can be analytically 
continued in $s$ and makes sense for 
$s_j\in \C \setminus \R_{<0}$. 
The jump of the values across the branch cut $\R_{<0}$ 
can be calculated by the Cauchy integral formula. 
For a function $f(s_j)$ on $\C\setminus \R_{<0}$ 
we set $(\Jump_jf)(-s_j) := 
\lim_{\epsilon \to +0} 
(f(-s_j-\iu \epsilon) - f(-s_j + \iu \epsilon))$ 
for $s_j>0$. 
Then we have  
\[
\left(\Jump_1 \cdots \Jump_c 
\hPi(\theta_v,\cO_\cX)\right) (q, -s_1,\dots, -s_c, z) 
= \prod_{j=1}^c (-2\pi\iu s_j)
\prod_{j=1}^c 
\left(z\parfrac{}{s_j}\right)^{\varphi_j(v)}
P_v(\alpha,s). 
\]
On the other hand, we can calculate 
the left-hand side using Proposition 
\ref{prop:Laplacetrans} when $q$ is sufficiently 
close to $\bzero$ and $s_j \ge 1$.  
It is 
% and $I_\cV(e^{2\pi\iu \txi_j} q, -z) 
% = \bigoplus_{v\in \Boxop} 
% e^{-2\pi\iu (\xi_j/z+f_v(\txi_j)} \unit_v \cup 
% I_\cV(q,-z)$, 
\begin{align*} 
& 
\left( 
\textstyle\prod_{j=1}^c (e^{-2\pi\iu \xi_j/z} -1 )\cup 
I^v_\cV\left(\textstyle\prod_{j=1}^c s_j^{-\txi_j} \cdot q,  -z\right), 
\;  
z^{n-\frac{\deg}{2}} z^{\rho_\cY} \Psi_\cV(\cO_\cX)
\right)_{\rm orb} \\
& \qquad = (-1)^c \hPi(\theta_v, \iota_* \cO_\cY)(q, e^{\pi\iu} s, z) 
\qquad \qquad \qquad \ \   
\text{by \eqref{eq:Toen_pushforward} and Proposition 
\ref{prop:Laplacetrans}} \\ 
& \qquad = (-2\pi\iu)^c \Pi_\cY(\Upsilon_v, \cO_\cY) 
\left(
\varsigma\left(\textstyle \prod_{j=1}^c s_j^{-\txi_j} \cdot q\right), 
z\right) 
\quad \text{by Corollary \ref{cor:Laplace}}. 
\end{align*} 
We arrive at the formula in the theorem by 
differentiating 
\[
P_v(\alpha,s) = \int_{T(\chT_\R \cap \{ \vW_\alpha(t) = s\})}
\frac{
\alpha^v t^v e^{-W_\alpha^{(0)}(t)/z} 
\frac{dt_1}{t_1}\wedge \cdots \wedge \frac{dt_n}{t_n}}
{(W_\alpha^{(1)}(t)-s_1) \cdots (W_\alpha^{(c)}(t)-s_c)}
\]
in $s_1,\dots,s_c$ and setting $s_1 = \cdots = s_c = 1$. 
The last statement follows from the fact 
that $K(\cX)$ is generated by line bundles 
\cite{Borisov-Horja:K} 
and the monodromy formula 
\begin{equation*} 
%\label{eq:monodromy_transform}
\Pi_\cY(\phi,\cE)(\varsigma(e^{2\pi\iu \txi} \cdot q),z) = 
\Pi_\cY(\phi,\iota^*(\cL_\xi^\vee) \otimes \cE)(\varsigma(q),z)
\end{equation*} 
where $\cE\in K(\cY)$, $\txi\in \LL^*$ and 
$\xi\in \Pic(\cX)$ is its image. 
(It follows from Definition \ref{def:intstr}, (ii) and 
$\varsigma(e^{2\pi\iu\txi}\cdot q) = G(\iota^*\xi)^{-1} 
\varsigma(q)$.) 
\end{proof} 

\section{Toric Calabi-Yau Hypersurfaces} 
In this section we restrict our attention 
to a Calabi-Yau hypersurface $\cY$ 
in a Gorenstein weak Fano toric orbifold $\cX$. 
Based on the period calculation 
in Theorem \ref{thm:Aperiod=Bperiod}, 
we study mirror symmetry of $\cY$ as an isomorphism of  
variations of Hodge structure 
and compare the integral structures on 
the both sides. We set $n:= \dim_\C \cX$ as before.

\subsection{Batyrev Mirror} 
Batyrev \cite{Batyrev:dual} constructed mirror 
pairs of Calabi-Yau hypersurfaces based on the 
duality of reflexive polytopes. 
This is the case where the ambient toric 
orbifold $\cX$ is Gorenstein (i.e.\ $K_\cX$ is pulled back 
from the coarse moduli space) 
and we take the nef partition 
$I_0= \emptyset$, $I_1 = \{1,\dots,m\}$ 
in Section \ref{subsec:mirrorconst}. 
We refer the reader to \cite{Batyrev:dual}, 
\cite[Section 4]{Cox-Katz} for details. 
The fan polytope $\Delta$ is said to be \emph{reflexive} 
if the integral distance between 0 and 
all hyperplanes generated by codimension 
one faces equal 1. 
If $\Delta$ is reflexive, the dual polytope 
$\Delta^*=\{y\in \bM_\R\,|\, \pair{x}{y} \ge -1\, 
(\forall x\in \Delta)\}$ have integral points 
as its vertices  and $(\Delta^*)^* = \Delta$. 
A weak Fano toric orbifold is Gorenstein 
if and only if its fan polytope is reflexive. 
% Furthermore, we assume that 
% \begin{itemize}
% \item[($\star$)] 
% $k\Delta \cap \bN = 
% (\Delta\cap \bN) + \cdots + (\Delta\cap \bN)$
% ($k$ fold sum) for all $k\in \N$. 
% \end{itemize} 
% This implies that $\Delta\cap \bN$ generates $\bN$ over $\Z$. 
% The condition ($\star$) is, however, more restrictive. 
% For example $\cX= \Proj(1,1,1,2,5)$ 
% does not satisfy ($\star$). 
% It ensures that the ambient quantum $D$-module of $\cY$ 
% (introduced later) is generated by $\unit$. 
% It is satisfied if $\cX$ is a smooth toric 
% manifold or admits a smooth toric crepant resolution. 
A general anticanonical section $\cY$ in a weak Fano 
Gorenstein toric orbifold $\cX$ 
is a quasi-smooth Calabi-Yau orbifold. 
As in Section \ref{subsec:mirrorconst}, 
the (uncompactified) mirror $\chY_\alpha$ of $\cY$ is 
given as a hypersurface in the torus $\chT$: 
\[
\chY_\alpha = W_\alpha^{-1}(1) \subset \chT, \quad 
W_\alpha(t) := W^{(1)}_\alpha(t) = \sum_{i=1}^{m+s} \alpha_i t^{b_i}.  
\]
Take $\C^\times$ co-ordinates $(t_1,\dots,t_n)$ on 
$\chT$ as before. 
Introduce another variable $t_0$ and 
define the subring $S_\Delta$ of $\C[t_0,t_1^\pm,\dots,t_n^\pm]$ 
by 
\[
S_\Delta = \bigoplus_{k\ge 0} S_\Delta^k, \quad 
S_\Delta^k = \bigoplus_{b\in  k \Delta \cap \bN}
\C t^b t_0^k.  
\]
It is graded by the degree of $t_0$. 
The toric variety $\Proj_\Delta :=\PROJ S_\Delta$ 
is a compactification of $\chT$. 
The variety $\Proj_\Delta$ is associated with 
the normal fan of $\Delta$ and its fan polytope 
is $\Delta^*$. 
The closure $\ov{\chY}_\alpha$ of 
$\chY_\alpha$ in $\Proj_\Delta$ is an anticanonical 
section of $\Proj_\Delta$. 
We say that $\chY_\alpha$ is \emph{$\Delta$-regular} 
if the intersection of $\ov{\chY}_\alpha$ 
and every torus orbit $O$ in 
$\Proj_\Delta$ is a smooth subvariety of 
codimension 1 in $O$. 
% tsugi ha problematic 
% This is equivalent to the 
% condition that for each face $\Delta'$ of $\Delta$ 
% and the Laurent polynomial 
% $W_{\alpha,\Delta'}(t) := \sum_{b_i \in \Delta'} \alpha_i t^{b_i}$, 
% the equations  
% \[
% W_{\alpha, \Delta'}(t) = t_1 \parfrac{}{t_1} W_{\alpha, \Delta'}(t) 
% = \cdots = t_n \parfrac{}{t_n} W_{\alpha,\Delta'}(t) = 0 
% \]
% have no solutions on $\chT$. 
% Here the case $\Delta' = \Delta$ is included. 
The set of parameters $\alpha$ for which $\chY_\alpha$ 
is $\Delta$-regular is a non-empty Zariski open subset 
$(\C^\times)^{m+s}_{\rm reg}$ of $(\C^\times)^{m+s}$.  
%\cite[Proposition 3.1.3]{Batyrev:dual}.  
This is invariant under the action of $\chT$ and 
descends to a Zariski open subset $\cM_{\rm reg}$ in $\cM$. 
Let $\chY_\alpha$ be $\Delta$-regular. 
A resolution of $\ov{\chY}_\alpha$ 
by a Calabi-Yau orbifold is constructed as follows.  
Choose a projective crepant resolution 
$\chcX \to \Proj_\Delta$ by a toric orbifold $\chcX$. 
This amounts to choosing a triangulation of the fan polytope 
$\Delta^*$. The fan polytope of $\chcX$ is 
still $\Delta^*$. 
Then the pull-back $\chcY_\alpha\subset \chcX$ 
of $\ov{\chY}_\alpha$ 
is a quasi-smooth Calabi-Yau hypersurface 
which gives a crepant resolution of $\ov{\chY}_\alpha$. 
\begin{remark} 
Batyrev \cite{Batyrev:dual} showed that one can 
choose $\chcX$ with only terminal singularities 
(MPCP desingularization). 
In this case $\chcY_\alpha$ becomes also terminal. 
In particular, we can take $\chcY_\alpha$ to be 
smooth in dimension 3 because terminal Gorenstein orbifolds 
in dimension 3 are all smooth. 
From a viewpoint of orbifold mirror symmetry, we do not 
need to restrict our attention to terminal 
partial resolutions. 
\end{remark} 

\subsection{A-Model VHS of a Calabi-Yau Hypersurface}
By the Gorenstein condition, the orbifold cohomology 
of $\cY$ is graded by even integers\footnote{
Recall that we ignore odd cohomology classes 
on the A-side.}. 
Since $\cY$ is Calabi-Yau, 
the quantum $D$-module $QDM(\cY)$ of $\cY$ 
in Definition \ref{def:QDM} restricted to 
$H^2_{\rm orb}(\cY)$ and $z=1$ reduces to a 
\emph{variation of Hodge structure} (VHS)
in the classical sense 
(cf.\ Remark \ref{rem:gen_Hodge}). 
See \cite[Section 8.5]{Cox-Katz}. 
We furthermore restrict our attention 
to the ``ambient part". 
Set 
\[
H_{\rm amb}^*(\cY) := \Image(\iota^* \colon 
H^*_{\rm orb}(\cX) \to H^*_{\rm orb}(\cY)). 
\]
By Corollary \ref{cor:iota_quantumhom}, 
$H_{\rm amb}^*(\cY)$ is closed under quantum product. 
For the convergence domain $U\subset H_{\rm orb}^*(\cY)$ 
in Section \ref{subsec:untwQDM}, we set 
$U' = H_{\rm amb}^2(\cY) \cap U$.  
Take a $\C$-basis $\eta_1,\dots,\eta_\ell$ in $H_{\rm amb}^2(\cY)$. 
Let $\tau^1,\dots,\tau^\ell$  
be the corresponding co-ordinates on $H_{\rm amb}^2(\cY)$ 
and $\tau = \sum_{i=1}^\ell \tau^i \eta_i$ be a 
general point on $H_{\rm amb}^2(\cY)$. 

\begin{definition}
The \emph{ambient A-model VHS} 
of $\cY$ is the tuple $(\rsH_{\rm A}, \nabla^{\rm A}, 
\rsF^{\, \bullet}_{\rm A}, Q_{\rm A})$ 
consisting of the locally free sheaf 
$\rsH_{\rm A} = H_{\rm amb}^*(\cY) \otimes \cO_{U'}$ over $U'$,  
the flat Dubrovin connection 
$\nabla^{\rm A} \colon \rsH_{\rm A} \to \rsH_{\rm A} 
\otimes \Omega_{U'}^1$ 
\[
\nabla^{\rm A} = d + \sum_{i=1}^\ell (\eta_i \circ_\tau) d \tau^i,  
\] 
the decreasing Hodge filtration 
$\rsF^{\, p}_{\rm A} = H_{\rm amb}^{\le 2(n-1-p)}(\cY) 
\otimes \cO_{U'}$ on $\rsH_{\rm A}$ and 
the $\nabla^{\rm A}$-flat 
$(-1)^{n-1}$-symmetric pairing 
$Q_{\rm A} \colon \rsH_{\rm A} \otimes \rsH_{\rm A} \to \cO_{U'}$
\begin{equation*} 
%\label{eq:Q_A}
Q_{\rm A}(\alpha, \beta) = (2\pi\iu)^{n-1} 
((-1)^{\frac{\deg}{2}} \alpha, \beta)_{\rm orb}.    
\end{equation*} 
The \emph{Galois action} of $\iota^*H^2(\cX;\Z)$ on the A-model 
VHS is defined similarly to \eqref{eq:Galois}, 
\eqref{eq:Galois_bundleisom}. 
\end{definition} 
The A-model VHS satisfies 
Griffiths transversality and 
the Riemann bilinear relation: 
\begin{equation} 
\label{eq:Griffiths_bilineareq} 
\nabla^{\rm A} \rsF^{\,p}_{\rm A} \subset \rsF^{\,p-1}_{\rm A}
\otimes \Omega_{U'}^1, 
\quad 
Q_{\rm A}(\rsF_{\rm A}^{\,p},\rsF_{\rm A}^{\,n-p}) =0. 
\end{equation} 
By a result of Mavlyutov \cite[Theorem 5.1]{Mavlyutov:chiral}, 
we have the decomposition $
H^*_{\rm orb}(\cY) = H^*_{\rm amb}(\cY) \oplus \Ker (\iota_*)$. 
Since the two summands are orthogonal to each other 
with respect to the orbifold Poincar\'{e} pairing, 
we know that the polarization form $Q_{\rm A}$ is non-degenerate 
on the ambient part $\rsH_{\rm A}$. 

The $\hGamma$-integral structure in 
Definition \ref{def:intstr} induces 
an integral structure on the above A-model VHS. 
Let $L_\cY(\tau) := L_\cY(\tau,z=1)$ denote the 
fundamental solution of the quantum differential 
equation (with $Q=1$) of $\cY$. 
By Proposition \ref{prop:Eulertw_Y}, 
we know that $L_\cY(\tau)$ with $\tau\in U'$ 
maps a class 
in $H^*_{\rm amb}(\cY)$ to a flat section of $\rsH_{\rm A}$. 
Therefore, if $\cE \in K(\cY)$ satisfies 
$\tch(\cE) \in H^*_{\rm amb}(\cY)$, 
we have a flat section $\frs(\cE)(\tau) 
= \frs(\cE)(\tau,z=1)$ of $\rsH_{\rm A}$ 
in the same way as \eqref{eq:Psi}: 
\[
\frs(\cE)(\tau) = (2\pi\iu)^{-(n-1)} 
L_\cY(\tau) \Psi_\cY(\cE), \quad 
\Psi_\cY(\cE) = \hGamma_\cY 
\cup (2\pi\iu)^{\frac{\deg_0}{2}} 
\inv^* \tch(\cE).   
\]
\begin{definition} 
Set $H_{\rm A} := \Ker\nabla^{\rm A} \subset \rsH_{\rm A}$.  
Define the local subsystem 
$H_{{\rm A},\Z}^{\rm amb}$ of $H_{\rm A}$ as  
\begin{align*} 
H_{{\rm A},\Z}^{\rm amb} 
&:= \{ \frs(\iota^*\cE) \,|\, \cE \in K(\cX)\}. 
\end{align*} 
It is a $\Z$-lattice of $H_{\rm A}$ 
and preserved under the Galois action 
by $\iota^*H^2(\cX;\Z)$. 
For $\frs(\cE_1), \frs(\cE_2) \in H_{{\rm A},\Z}^{\rm amb}$, 
we have  
\begin{equation*}
%\label{eq:Q_A_Euler}
Q_{\rm A}(\frs(\cE_1), \frs(\cE_2)) =  \chi_\cY (\cE_1, \cE_2). 
\end{equation*}
We call $H_{{\rm A},\Z}^{\rm amb}$ 
the \emph{ambient $\hGamma$-integral structure} 
on the ambient A-model VHS of $\cY$. 
\end{definition} 
\begin{remark} 
The above integral structure also induces rational 
and real structures $H_{{\rm A},\Q}$, $H_{{\rm A},\R}$ 
on the A-model VHS. 
With respect to the real involution $\kappa$ 
defined by $H_{{\rm A},\R}$, we hope to have the 
Hodge decomposition and the Riemann bilinear inequality:  
\begin{equation} 
\label{eq:Hodgedecomp_bilinearineq} 
\rsH_{\rm A} = 
\rsF^{\, p}_{\rm A} \oplus \kappa(\rsF^{\, n-p}_{\rm A}), \quad 
\iu^{p-q} Q_{\rm A}(\phi,\kappa(\phi)) > 0 
\end{equation} 
for $\phi\in \rsH^{\ p,q}_{\rm A} = \rsF_{\rm A}^{\, p} \cap 
\kappa(\rsF^{\, q}_{\rm A}) \setminus \{0\}$, $q= n-1-p$. 
From a result in \cite{Iritani:tt*}, it follows that 
these properties hold in a neighbourhood of the large 
radius limit\footnote{We showed 
the Riemann bilinear inequality for the $(p,p)$ part 
(or algebraic part) of quantum cohomology VHS in \cite{Iritani:tt*}. 
Note that the ambient part $H^*_{\rm amb}(\cY)$ 
is contained in the $(p,p)$ part.}. 
In fact, by mirror symmetry, we will see that 
these properties hold globally.  
\end{remark} 

\subsection{B-Model VHS} 
As we saw in Section \ref{subsec:mirrorconst}, 
the parameter space of the mirror $\chY_\alpha$ 
(or its compactification $\chcY_\alpha$) 
descends to $\cM = \LL^* \otimes \C^\times$. 
We are interested in the VHS associated with the 
family of $\Delta$-regular Calabi-Yau hypersurfaces:
\[
\pr_2 \colon 
\chfrY \overset{\rm def}{=} 
\left\{(t, \alpha) \in 
\chcX \times (\C^\times)^{m+s}_{\rm reg}  
\; \big |\; t\in \chcY_\alpha\right \}\big /\chT 
\longrightarrow \cM_{\rm reg}. 
\]
In this paper, we restrict our attention to the VHS 
on the untwisted middle cohomology $H^{n-1}(\chcY_\alpha)$. 
Furthermore, we only consider classes obtained as residues.  
Let $\Res \colon H^n(\chcX\setminus \chcY_\alpha) 
\to H^{n-1}(\chcY_\alpha)$ be the Poincar\'{e} residue 
map: 
\[
\int_{\gamma} \Res(\omega) = 
\frac{1}{2\pi\iu} 
\int_{T(\gamma)} \omega, \quad \omega \in 
H^n(\chcX\setminus \chcY_\alpha),  
\]
where $T(\gamma)\subset \chcX\setminus \chcY_\alpha$ 
is a tube of an $(n-1)$-cycle $\gamma \subset 
\chcY_\alpha$ (as appeared in \eqref{eq:oscresint}). 
Define the residue part of $H^{n-1}(\chcY_\alpha)$ by 
\[
H_{\rm res}^{n-1}(\chcY_\alpha) := \Image( \Res \colon 
H^0(\chcX, \Omega^n_{\chcX}(*\chcY_\alpha)) 
\to H^{n-1}(\chcY_\alpha)).   
\]
Here $H^0(\chcX, \Omega^n_{\chcX}(*\chcY_\alpha))$ 
is the space of algebraic $n$-forms 
with arbitrary poles along $\chcY_\alpha$. 
% An explicit description of this space is given in 
% \cite[Theorem 9.7]{Batyrev-Cox}. 
Let $\chD_1,\dots,\chD_N$ be the toric divisors 
of $\chcX$. 
We claim that we have the orthogonal decomposition 
with respect to the intersection pairing: 
\begin{equation} 
\label{eq:decomp_Mav}
H^{n-1}(\chcY_\alpha) = H_{\rm res}^{n-1}(\chcY_\alpha) 
\oplus \left( 
\sum_{i=1}^N f_{i*} H^{n-3}(\chcY_\alpha \cap \chD_i) 
\right) 
\end{equation} 
where $f_i \colon \chD_i \cap \chcY_\alpha 
\hookrightarrow \chcY_\alpha$ 
is the inclusion. 
To see this, we use the Gysin 
exact sequence (see \cite[Eqn (7)]{Mavlyutov:semiample}): 
\begin{equation}
\label{eq:Gysinseq}
\begin{CD}
\bigoplus_{i=1}^N H^{n-3}(\chcY_\alpha \cap \chD_i) 
@>{\bigoplus f_{i*}}>> H^{n-1}(\chcY_\alpha) 
@>>> W_{n-1}(H^{n-1}(\chY_\alpha)) @>>> 0.   
\end{CD} 
\end{equation} 
Here $W_\bullet(H^{n-1}(\chY_\alpha))$ 
denotes the weight filtration 
of Deligne's mixed Hodge structure. 
In the proof of \cite[Theorem 4.4]{Mavlyutov:semiample}, 
it is shown that the composition 
$H^0(\chcX, \Omega^n_{\chcX}(*\chcY_\alpha)) \to 
H^{n-1}(\chcY_\alpha) \to W_{n-1}(H^{n-1}(\chY_\alpha))$ 
is surjective. 
Hence $H_{\rm res}^{n-1}(\chcY_\alpha)$ and 
$\sum_{i=1}^N f_{i*}H^{n-3}(\chcY_\alpha\cap \chD_i)$ generate 
$H^{n-1}(\chcY_\alpha)$. On the other hand, 
$H^{n-1}_{\rm res}(\chcY_\alpha)$ and 
$f_{i*}H^{n-3}(\chcY_\alpha \cap \chD_i)$ are orthogonal 
to each other 
because $\Res(\omega)$ for a holomorphic $n$-form $\omega$ 
on $\chcX\setminus \chcY_\alpha$ 
vanishes on $\chcY_\alpha \cap \chD_i$.  
This proves the claim. 
The decomposition \eqref{eq:decomp_Mav} 
gives a topological characterization 
of $H_{\rm res}^{n-1}(\chcY_\alpha)$: 
It consists of degree $n-1$ classes 
on $\chcY_\alpha$ which vanish on 
the toric boundaries $\chcY_\alpha \cap \chD_i$. 
Therefore the subspace $H_{\rm res}^{n-1}(\chcY_\alpha)$ 
is defined over $\Q$ and is 
preserved by the Gauss-Manin connection. 
We can also see that each class 
$\alpha \in H_{\rm res}^{n-1}(\chcY)$ is primitive, i.e.\ 
$\alpha \cup H =0$ for an ample hyperplane class $H$. 
By \eqref{eq:decomp_Mav} and \eqref{eq:Gysinseq}, 
we have the identification 
\[
H_{\rm res}^{n-1}(\chcY_\alpha) \cong W_{n-1}(H^{n-1}(\chY_\alpha)). 
\]
Since $W_{n-1}(H^{n-1}(\chY_\alpha))$ 
is the lowest weight component, this identification 
induces a $\Q$-Hodge structure of weight $n-1$ 
on $H^{n-1}_{\rm res}(\chcY_\alpha)$. 
\begin{definition}  
The \emph{residual B-model VHS} of 
the family $\pr_2 \colon \chfrY \to \cM_{\rm reg}$ 
is a tuple 
$(\rsH_{\, \rm B}, \nabla^{\rm B}, H_{\rm B,\Q}, 
\rsF^{\,\bullet}_{\rm B}, 
Q_{\rm B})$ where $\rsH_{\,\rm B}$ is the locally free subsheaf 
of $(R^{n-1}{\pr_2}_* \C_\chfrY)\otimes \cO_{\cM_{\rm reg}}$
over $\cM_{\rm reg}$ 
whose fiber at $[\alpha]$ is the residue 
part $H_{\rm res}^{n-1}(\chcY_\alpha)$, 
$\nabla^{\rm B}$ is the Gauss-Manin connection, 
$H_{\rm B,\Q} 
\subset \Ker\nabla^{\rm B}$ is the 
rational structure explained above,  
$\rsF^{\, p}_{{\rm B}, [\alpha]} = 
\bigoplus_{j\ge p} 
H^{j,n-1-j}_{\rm res}(\chcY_\alpha)$ 
is the standard Hodge filtration and 
$Q_{\rm B}$ is the intersection form: 
\[
Q_{\rm B}(\omega_1,\omega_2) = (-1)^{(n-1)(n-2)/2}
\int_{\chcY_\alpha} \omega_1 \cup \omega_2. 
\]
\end{definition} 

The residual B-model VHS 
satisfies the usual properties of 
a variation of \emph{polarized} Hodge structure 
as given in \eqref{eq:Griffiths_bilineareq} and 
\eqref{eq:Hodgedecomp_bilinearineq}. 
% Note that $Q_{\rm B}$ is non-degenerate by 
% the orthogonal decomposition \eqref{eq:decomp_Mav}. 

Consider the relative homology group 
$H_*(\chT,\chY_\alpha)$. 
The Morse-theoretic argument in \cite[Section 3.3.1]{Iritani:int} 
(see also \cite{Danilov-Khovanskii}) shows that 
\begin{equation} 
\label{eq:relhom} 
H_k(\chT,\chY_\alpha;\Z) 
\cong H_k(\chT, \{\Re(W_\alpha(t))\gg 0\} ;\Z) 
\cong 
\begin{cases} 
0 & k \neq n \\ 
\Z^{\Vol(\Delta)} & k=n 
\end{cases} 
\end{equation} 
where $\Vol(\Delta)$ is the normalized volume 
of $\Delta$ such that the volume of the standard $n$-simplex 
is 1. The group $H_n(\chT,\{\Re(W_\alpha(t)) \gg 0\})$ 
is generated by \emph{Lefschetz thimbles} emanating 
from critical points of $W_\alpha(t)$;  
$\Vol(\Delta)$ is the number of critical points
of $W_\alpha(t)$ (with multiplicities).  
By the relative homology exact sequence, we have 
\begin{equation*} 
\label{eq:relhom_exactseq}
\begin{CD} 
0 @>>> H_n(\chT) @>>> H_n(\chT,\chY_\alpha) 
@>{\partial}>> H_{n-1}(\chY_\alpha) @>>> H_{n-1}(\chT) @>>>0. 
\end{CD}
\end{equation*} 
% \[
% \begin{CD}
% 0 @>>> H^{n-1}(\chT) @>>> H^{n-1}(\chY_\alpha) 
% @>{\delta}>> H^n(\chT, \chY_\alpha) 
% @>>> H^n(\chT) @>>> 0 
% \end{CD} 
% \]
% Here we used the fact that $H^{n-1}(\chT) \to H^{n-1}(\chY_\alpha)$ 
% is injective \cite{Danilov-Khovanskii} and 
% $H^n(\chY_\alpha)=0$. 
The image of $H_n(\chT,\chY_\alpha;\Z)$ under 
$\partial$ consists of the \emph{vanishing cycles} 
of $W_\alpha(t)$. 
\begin{lemma}
\label{lem:VC}
The image of the composition 
\[ 
\begin{CD} 
H_n(\chT,\chY_\alpha) @>{\partial}>> H_{n-1}(\chY_\alpha) 
@>>> H_{n-1}(\chcY_\alpha) 
@>{\PD}>> H^{n-1}(\chcY_\alpha) 
\end{CD} 
\]
is $H_{\rm res}^{n-1}(\chcY_\alpha)$. 
Here $\PD$ is the Poincar\'{e} duality isomorphism (defined over $\Q$).   
We denote by $\VC \colon H_n(\chT,\chY_\alpha) 
\to H_{\rm res}^{n-1}(\chY_\alpha)$ the 
resulting surjection. 
\end{lemma}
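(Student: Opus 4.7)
The plan is to prove $\Image(\VC) \subseteq H^{n-1}_{\rm res}(\chcY_\alpha)$ by a direct intersection-theoretic argument, and the reverse inclusion by a duality argument based on Leray's residue formula.

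For the first inclusion, given $C \in H_n(\chT, \chY_\alpha)$, the cycle $\partial C \in H_{n-1}(\chY_\alpha)$ is representable inside the open part $\chY_\alpha$, hence its image $j_*(\partial C) \in H_{n-1}(\chcY_\alpha)$ admits a representative disjoint from the toric boundary $\chD_\cY := \bigcup_i (\chcY_\alpha \cap \chD_i)$. For any $\eta' \in H^{n-3}(\chcY_\alpha \cap \chD_i)$, the Poincar\'e dual $\PD(f_{i*}\eta')$ is representable inside $\chcY_\alpha \cap \chD_i$, so its intersection with $j_*(\partial C)$ vanishes. Thus $Q_{\rm B}(\PD(j_*\partial C), f_{i*}\eta') = 0$ for every such $\eta'$, and by the orthogonal decomposition \eqref{eq:decomp_Mav}, $\VC(C) = \PD(j_*(\partial C)) \in H^{n-1}_{\rm res}(\chcY_\alpha)$.

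For surjectivity, I would use that $H^{n-1}_{\rm res}$ carries the polarized sub-Hodge structure isomorphic to $W_{n-1}H^{n-1}(\chY_\alpha)$, so $Q_{\rm B}$ restricts to a non-degenerate pairing on it. Hence it suffices to show that no nonzero $\Res(\tilde\omega) \in H^{n-1}_{\rm res}$, with $\tilde\omega \in H^0(\chcX, \Omega^n_{\chcX}(*\chcY_\alpha))$, pairs trivially with $\Image(\VC)$. By Leray's residue formula,
\[
\int_{\VC(C)} \Res(\tilde\omega) = \int_{\partial C} \Res(\tilde\omega) = \frac{1}{2\pi\iu} \int_{T(\partial C)} \tilde\omega,
\]
where $T(\partial C) \subset \chT \setminus \chY_\alpha$ is the $S^1$-tube over $\partial C$. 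The composite $T \circ \partial : H_n(\chT, \chY_\alpha) \to H_n(\chT \setminus \chY_\alpha)$ admits the direct description "send $C$ to $C$ excised near $\chY_\alpha$"; via the Leray residue isomorphism $\Res : H^n(\chcX \setminus \chcY_\alpha) \to H^{n-1}_{\rm res}(\chcY_\alpha)$, its image pairs non-degenerately with $H^{n-1}_{\rm res}$, forcing $\Res\tilde\omega = 0$.

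The main obstacle is the non-degeneracy of the pairing in the surjectivity step, which amounts to identifying $\Image(T\circ \partial) \subset H_n(\chT \setminus \chY_\alpha)$ with the appropriate subspace dual to $H^{n-1}_{\rm res}$ under the residue map. A cleaner alternative route proceeds in two stages: first prove that $\Image(j_* : H_{n-1}(\chY_\alpha) \to H_{n-1}(\chcY_\alpha))$ equals $\PD(H^{n-1}_{\rm res})$ by combining Mayer--Vietoris on the open cover $\chcY_\alpha = \chY_\alpha \cup N(\chD_\cY)$ with the orthogonality established in the first step; then show that $\partial$ surjects onto $\Image(j_*)$ modulo $\ker(j_*)$, using the Morse-theoretic description of $H_n(\chT, \chY_\alpha)$ by Lefschetz thimbles from \eqref{eq:relhom}, which provides enough vanishing cycles to generate the image.
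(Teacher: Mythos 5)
Your first inclusion is correct and is essentially the argument in the paper: $\partial C$ is representable inside the affine open part, so its Poincar\'e dual is orthogonal to every $f_{i*}H^{n-3}(\chcY_\alpha\cap\chD_i)$, and by the orthogonal decomposition \eqref{eq:decomp_Mav} it lands in $H^{n-1}_{\rm res}(\chcY_\alpha)$.

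Your surjectivity argument, however, has a genuine gap that you yourself flag. After reducing (correctly, via non-degeneracy of $Q_{\rm B}$ on $H^{n-1}_{\rm res}$) to showing that nothing in $H^{n-1}_{\rm res}$ pairs trivially with $\Image(\VC)$, you invoke the Leray residue formula and then simply \emph{assert} that $\Image(T\circ\partial)\subset H_n(\chT\setminus\chY_\alpha)$ pairs non-degenerately with $H^{n-1}_{\rm res}$. That assertion is exactly the surjectivity statement you are trying to prove, recast; nothing in the proposal establishes it, and the ``excise $C$ near $\chY_\alpha$'' description also misidentifies the dimensions involved ($T(\partial C)$ is the $n$-dimensional circle bundle over the $(n-1)$-cycle $\partial C$, not the excised chain, whose boundary lives in dimension $n-1$). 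The suggested Mayer--Vietoris/Lefschetz-thimble alternative is likewise only sketched and not carried out.

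The paper closes surjectivity by a much shorter dualization that avoids Leray entirely: identify $H^{n-1}_{\rm res}(\chcY_\alpha)^\vee\cong H^{n-1}_{\rm res}(\chcY_\alpha)\cong W_{n-1}H^{n-1}(\chY_\alpha)$ via the non-degenerate restricted pairing, observe that the dual of $\VC$ is the composition $W_{n-1}H^{n-1}(\chY_\alpha)\hookrightarrow H^{n-1}(\chY_\alpha)\overset{\delta}{\to}H^n(\chT,\chY_\alpha)$ with $\delta$ dual to $\partial$, and note from the long exact sequence of the pair that $\ker\delta = \Image(H^{n-1}(\chT)\to H^{n-1}(\chY_\alpha))$. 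Because $H^{n-1}(\chT)$ has pure weight $2(n-1)$ while $W_{n-1}$ is the bottom weight piece, the intersection is zero, so the dual map is injective and $\VC$ is surjective. You should replace your pairing/Leray sketch with this weight argument, which is both complete and elementary given the machinery already in the paper.
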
 
\begin{proof} 
It is clear that an image of the above map vanishes 
on the toric boundaries $\chcY_\alpha \cap \chD_i$. 
Thus the image is contained in $H_{\rm res}^{n-1}(\chcY_\alpha)$. 
The dual of this map is given by 
\begin{equation} 
\label{eq:dualincl}
H_{\rm res}^{n-1}(\chcY_\alpha)^\vee 
\cong H_{\rm res}^{n-1}(\chcY_\alpha) 
\cong W_{n-1}H^{n-1}(\chY_\alpha) \hookrightarrow 
H^{n-1}(\chY_\alpha) \overset{\delta}{\longrightarrow} 
H^n(\chT,\chY_\alpha) 
\end{equation} 
where the first isomorphism is via the intersection 
pairing. The map $\delta$ is dual to $\partial$ 
and its kernel is $H^{n-1}(\chT)$. 
Because the intersection of $H^{n-1}(\chT)$ 
and $W^{n-1}H^{n-1}(\chY_\alpha)$ is zero for 
the weight reason, the above dual map is injective. 
\end{proof} 

\begin{definition} 
The \emph{vanishing cycle integral structure} 
$H_{\rm B, \Z}^{\rm vc} \subset H_{\rm B,\Q}$
on the residual B-model VHS is defined to 
be the image of $H_n(\chT,\chY_\alpha;\Z)$ 
under the vanishing cycle map $\VC 
\colon H_n(\chT,\chY_\alpha) \to 
H_{\rm res}^{n-1}(\chcY_\alpha)$. 
\end{definition} 

\begin{remark} 
The mixed Hodge structure of affine hypersurfaces 
in the algebraic tori was studied 
by Danilov-Khovanskii \cite{Danilov-Khovanskii} and  
Batyrev \cite{Batyrev:mixed}. 
The Hodge structure of toric hypersurfaces 
has been studied by Batyrev-Cox \cite{Batyrev-Cox} 
and Mavlyutov \cite{Mavlyutov:semiample, Mavlyutov:chiral}.
\end{remark} 

\subsection{Mirror Isomorphism 
with Integral Structures} 
In this section, as we did in Section 
\ref{subsec:toricorb}, we assume that 
$\bN$ is generated by $\Delta\cap \bN$ 
as a $\Z$-module.  
Also we choose \emph{non-zero} vectors 
$\{b_{m+1},\dots, b_{m+s}\} 
\subset \Delta\cap \bN \setminus \{b_1,\dots,b_m\}$ 
such that $b_1,\dots,b_{m+s}$ generate $\bN$ over $\Z$. 
Because $\Delta$ is reflexive, every $b_i$ has to be 
on the boundary of $\Delta$. 
Then the lift $\txi_1$ of $\xi_1= 
\ov{D}_1+\cdots + \ov{D}_m$ defined 
in Section \ref{subsec:mirrorthm_comp} 
equals $\sum_{i=1}^{m+s} D_i$ 
and so $\hrho_\cY = \hrho - \txi_1=0$. 
Thus the degrees of the variables $q_1,\dots,q_{r+s}$ are zero. 
By the homogeneity of the $I$-function 
$I_\cV$, the mirror map $\varsigma(q) 
= \iota^* \tvarsigma(q)$ for $\cY$ 
(see \eqref{eq:Y_mirrormap}) takes values 
in $H^2_{\rm amb}(\cY)$. 

We briefly review  
the mirror isomorphism of $D$-modules 
for a toric orbifold $\cX$ in \cite{Iritani:int}. 
We can associate the \emph{B-model $D$-module} 
$(\cRz, \nabla, (\cdot,\cdot)_{\cRz})$ 
to the Landau-Ginzburg mirror $(\chT, W_\alpha(t))$ 
of $\cX$. It is a meromorphic flat connection 
over $\cMo \times \C$ with poles along $z=0$ 
such that the underlying $\Z$-local system at $([\alpha],z) 
\in \cMo \times \C^\times$ is given by 
the lattice $H^n(\chT, \{\Re(W_\alpha(t)/z)\ll 0\};\Z)$. 
Here $\cMo$ is a Zariski open subset of $\cM$ containing 
$\cM_{\rm reg}$. 
The oscillatory form 
$\phi(t) e^{W_\alpha(t)/z} \frac{dt_1}{t_1}\wedge \cdots 
\wedge \frac{dt_n}{t_n}$ 
appearing in Section \ref{subsec:periodcal} 
gives a section of the B-model $D$-module $\cRz$. 
We have the mirror isomorphism\footnote
{Because we changed our convention of $QDM(\cX)$ 
from \cite{Iritani:int} (see Remark \ref{rem:change}), 
we also need to modify the definition of the B-model 
$D$-module accordingly. The necessary modification 
makes the B-model $D$-module more natural. 
We defined the integration pairing of a section 
$[\phi(t) e^{W(t)/z} \frac{dt_1}{t_1} \cdots 
\frac{dt_n}{t_n}]$ of $\cRz$ and a Lefschetz thimble
with the additional factor of $(-2\pi z)^{-n/2}$ 
in \cite[Eqn.\ (53)]{Iritani:int}.  
Then the integral structure, the flat connection 
and the pairing on $\cRz$ were introduced \emph{through}  
this integration pairing. 
We just need to remove the factor $(-2\pi z)^{-n/2}$ there 
to redefine these ingredients.}
\cite[Proposition 4.8]{Iritani:int} in a neighbourhood 
of $q=\bzero$ 
\[
\Mir_\cX \colon (\tau\times \id)^*(QDM(\cX)/H^2(\cX;\Z)) \cong 
(\cRz, \nabla, (\cdot,\cdot)_{\cRz})  
\]
sending the unit section $\unit$ to 
$[e^{W_\alpha(t)/z} \frac{dt_1}{t_1} \wedge  
\cdots \wedge \frac{dt_n}{t_n}]$.    
Here $\tau$ is the mirror map in \eqref{eq:X_mirrormap} 
and $H^2(\cX;\Z)$ acts by Galois action. 
This induces an inclusion of $\Z$-lattices: 
\[
\Mir_\cX^\Z \colon K(\cX) \hookrightarrow 
H_n(\chT, \{\Re(W_\alpha(t)/z) \gg 0\} ;\Z )  
\]
such that for $\Gamma_\cE = \Mir_\cX^\Z(\cE)$, 
\[
\bigl(\phi(q,-z), \frs(\cE)(\tau(q),z)
\bigr)_F = 
\int_{\Gamma_\cE} \Mir_\cX( \phi(q,-z) )
\]
for any section $\phi(q,z)$ of 
$(\tau\times \id)^*QDM(\cX)$. 
(It is known by Hua \cite{Hua:K-toric} that 
$K(\cX)$ is a free $\Z$-module and thus 
$K(\cX) \cong \Sol(\cX)$.) 
For $z>0$ and $\alpha \in (\R_{>0})^{m+s}$, 
$\Mir_\cX^\Z$ sends the structure sheaf $\cO_\cX$  
to the real Lefschetz thimble $\chT_\R$. 
By \cite[Theorem 4.11]{Iritani:int}, 
the map $\Mir_\cX^\Z$ gives 
an isomorphism of lattices under the assumption  
\begin{equation*} 
% \label{eq:assump_Kpairing} 
K(\cX) \to \Hom(K(\cX),\Z), \ \cE \mapsto \chi(\cdot, \cE), \ 
\text{is surjective.}
\end{equation*} 
This holds true in our case because 
$\cX$ does not have generic stabilizers and 
we can apply the result of Kawamata \cite{Kawamata:toric} 
that the derived category $D^b(\cX)$ of coherent sheaves 
has a full exceptional collection $\{\cE\}_{i=1}^N$. 
(In this case, $\{\cE_i\}_{i=1}^N$ forms a $\Z$-basis of $K(\cX)$ 
and the Gram matrix $\chi(\cE_i,\cE_j)$ is an upper-triangular 
matrix with diagonal entries all equal to one.) 
Therefore $\Mir_\cX^\Z$ is an isomorphism. 
% Here we used the Poincar\'{e} duality\footnote
% {This is defined by the intersection pairing 
% $H_n(\chT,\{\Re(W_\alpha(t)/z) \ll 0\};\Z) 
% \times H_n(\chT,\{\Re(W_\alpha(t)/z) \gg 0\};\Z) 
% \to \Z$ which is perfect.}  
% \[
% \PD\colon 
% H_n(\chT, \{\Re(W_\alpha(t)/z)\gg 0\};\Z)
% \cong H^n(\chT,\{\Re(W_\alpha(t)/z)\ll 0\};\Z)
% \]
% to identify the relative homology with the relative 
% cohomology. 
If $q = [\alpha]$ is sufficiently 
close to $\bzero$, all the critical values of 
$W_\alpha(t)$ are contained in the ball 
$\{u\in \C\,|\,|u|\le 1/2\}$. 
Then we have the canonical identification for $z>0$:  
\begin{equation}
\label{eq:relativehomologies}
H_n(\chT, \{\Re(W_\alpha(t)/z)\gg 0\};\Z)
\cong 
H_n(\chT, \{\Re(W_\alpha(t))\ge 1\};\Z) 
\cong H_n(\chT,\chY_\alpha;\Z).  
\end{equation} 
The following is the second main theorem of the paper. 
\begin{theorem}
\label{thm:intstr_Batyrevmirror} 
Let $(\cY, \chcY_\alpha)$ be a Batyrev's mirror pair 
of Calabi-Yau hypersurfaces. 
The ambient A-model VHS of $\cY$ and 
the residual B-model VHS of $\chcY_\alpha$ are 
isomorphic: 
\[
\Mir_\cY \colon 
\varsigma^*\left(
(\rsH_{\rm A}, \nabla^{\rm A}, 
\rsF^{\, \bullet}_{\rm A}, Q_{\rm A})/\iota^*H^2(\cX;\Z)
\right) 
\cong 
(\rsH_{\, \rm B}, \nabla^{\rm B}, 
\rsF^{\, \bullet}_{\rm B}, Q_{\rm B})  
\]
in a neighbourhood of $q = \bzero$. 
We have the following correspondence  
under $\Mir_\cY$: 
\[
\Mir_\cY \colon \Upsilon_v(q,z=1) 
\longmapsto 
(-1)^{\age(v)} \age(v)! \Res\left (
\frac{\alpha^v t^v \frac{dt_1}{t_1}
\wedge \cdots \wedge \frac{dt_n}{t_n}}
{(W_\alpha(t)-1)^{\age(v)+1}}
\right), 
\]
where $v\in \Boxop$, $q = [\alpha]$ and 
$\Upsilon_v(q,1)$ is the section of 
$\varsigma^*\rsH_{\rm A}$ in Corollary \ref{cor:mirrorthm_Y} 
(see \eqref{eq:Upsilon}). 
In particular, $F(q) \unit$ 
corresponds to the holomorphic volume form 
$\Omega_\alpha= \frac{dt_1}{t_1}
\wedge \cdots \wedge \frac{dt_n}{t_n}/dW_\alpha$ 
on $\chcY_\alpha$. 
The mirror isomorphism $\Mir_\cY$ induces an 
isomorphism 
\[
\Mir_\cY^\Z \colon 
H^{\rm amb}_{{\rm A},\Z} \cong 
H^{\rm vc}_{{\rm B},\Z}
\] of $\Z$-local systems. 
Moreover, when $z>0$ and $q=[\alpha]$ is sufficiently 
close to $\bzero$, we have the commutative diagram: 
\begin{equation} 
\label{eq:intstr_XandY} 
\begin{CD} K(\cX) @>{\Mir_\cX^\Z}>{\cong}> 
H_n(\chT, \{\Re(W_\alpha(t)/z) \gg 0\};\Z)\\ 
@V{\iota^*}VV @VV{\epsilon_{n-1} \VC}V \\
H^{\rm amb}_{{\rm A},\Z} @>{\Mir_\cY^\Z}>{\cong}> 
H^{\rm vc}_{{\rm B},\Z} 
\end{CD}
\end{equation} 
where $\epsilon_{n} = (-1)^{n(n-1)/2}$. 
Here we used the identification \eqref{eq:relativehomologies}
to define the right vertical arrow $\VC$ 
(see Lemma \ref{lem:VC}). 
The left vertical arrow $\iota^*$ sends 
$\cE$ to $\frs(\iota^*\cE)(\varsigma(q))$ 
for $\cE\in K(\cX)$. 
% The two horizontal arrows $\Mir_\cX^\Z$, $\Mir_\cY^\Z$ 
% are isomorphisms under the assumption \eqref{eq:assump_Kpairing}. 
\end{theorem} 
\begin{proof} 
Consider the map 
$\rsR \colon S_\Delta^+ \to H^0(\chT, 
\Omega^n_{\chT}(*\chY_\alpha))$ \cite{Batyrev:mixed} 
defined by 
\[
\rsR(t^b t_0^k ) = (-1)^{k-1} (k-1)! 
\frac{t^b }{(W_\alpha(t)-1)^{k}}
\frac{dt_1}{t_1} \wedge \cdots \wedge \frac{dt_n}{t_n}, \quad 
k>0. 
\]
Let $I_\Delta^{(1)}$ be the ideal of $S_\Delta$ 
spanned by the monomials $t^b t_0^k$ such that 
$b$ is in the interior of $k \Delta$. 
Batyrev showed \cite[Theorem 8.1, 8.2]{Batyrev:mixed} 
that $\rsR(t^b t_0^k)$ with $t^b t_0^k\in I_\Delta^{(1)}$, 
$k\le p$ generate the Hodge filter $F^{n+1-p}W_{n+1}(H^n(\chT\setminus 
\chY_\alpha))$. Because the Poincar\'{e} residue map 
$H^n(\chT\setminus\chY_\alpha) \to H^{n-1}(\chY_\alpha)$ 
is a surjective morphism of mixed Hodge structures  
of the Hodge type $(-1,-1)$ \cite[Section 5]{Batyrev:mixed}, 
we know that $\Res(\rsR(t^b t_0^k))$, $t^b t_0^k\in I_\Delta^{(1)}$, 
$k\le p$ generate $F^{n-p}W_{n-1}(H^n(\chY_\alpha))$. 
(One can also see that $\rsR(t^b t_0^k)$ for  
$t^b t_0^k\in I_\Delta^{(1)}$ extends to a holomorphic 
$n$-form on $\chcX\setminus \chcY_\alpha$. 
See the proof of \cite[Theorem 4.4]{Mavlyutov:semiample}.)

By the homogeneity of $\Upsilon_v(q,z)$, we have 
$(-1)^{\deg/2} \Upsilon_v(q,1) = (-1)^{\age(v)} 
\Upsilon(q,-1)$. 
Using this and Theorem \ref{thm:Aperiod=Bperiod}, 
we have  
\begin{align*}
%\label{eq:periodofY} 
%\begin{split}  
\varsigma^* Q_{\rm A}(
\Upsilon_v|_{z=1}, \frs(\iota^*\cO_\cX)) 
& = (-1)^{\age(v)} \varsigma^* \Pi_\cY(\Upsilon_v, \cO_\cY)
\big|_{z=1}  \\
& =   Q_{\rm B}(
\Res(\rsR(\alpha^v t^v t_0^{\age(v)+1})), 
\epsilon_{n-1} \VC(\chT_\R)) 
%\end{split} 
\end{align*} 
for $v\in \Boxop$ and $q$ sufficiently 
close to $\bzero$. 
Here $\VC(\chT_\R) = \chY_\alpha \cap \chT_\R $ 
is what we denote by $\Gamma_\R(\alpha)$ before. 
% On the other hand, Theorem \ref{thm:Aperiod=Bperiod} 
% applied to $\cX$ itself gives 
% \begin{equation} 
% \label{eq:periodofX} 
% \tau^*\Pi_\cX(\theta_v, \frs(\cO_\cX)) = 
% \int_{\chT_\R} e^{-W_\alpha(t)/z} \frac{dt_1}{t_1} 
% \wedge \cdots \frac{dt_n}{t_n}. 
% \end{equation} 
We consider the monodromy transforms of 
the both-hand sides around $q=\bzero$. 
By the last statement in 
Theorem \ref{thm:Aperiod=Bperiod}, 
$\varsigma^*Q_A(\Upsilon_v, \frs(\iota^* \cE))$ is 
monodromy generated by 
$\varsigma^*Q_A(\Upsilon_v, \frs(\iota^*\cO_\cX))$. 
Moreover, $\Mir_\cX$ and the vertical arrows 
in the diagram \eqref{eq:intstr_XandY} 
is equivariant with respect to the monodromy transformation 
around $q=\bzero$ 
(which is the tensor product of line bundles 
on $K(\cX)$; see Definition \ref{def:intstr}, (ii)). 
Therefore, we have for any 
$\cE\in K(\cX)$ and $q$ sufficiently close to $\bzero$, 
\begin{equation} 
\label{eq:periodmatrix}
\varsigma^* Q_{\rm A}(\Upsilon_v|_{z=1}, \frs(\iota^*\cE)) 
= Q_{\rm B} (\Res (\rsR (\alpha^v t^v t_0^{\age(v)+1} )), 
\epsilon_{n-1} \VC( \Gamma_\cE) ) 
\end{equation} 
where $\Gamma_\cE := \Mir_\cX^\Z(\cE)$.  
The A-model VHS $\varsigma^*\rsH_{\rm A}$ is generated 
by $\Upsilon_v|_{z=1}$, $v\in \Boxop$ and 
their covariant derivatives 
near $q=\bzero$ as an $\cO_{\cM_{\rm reg}}$-module  
(see the discussion before Corollary 
\ref{cor:mirrorthm_Y}). 
Likewise we claim that the B-model VHS $\rsH_{\, \rm B}$ 
is generated by 
$\Res(\rsR(\alpha^v t^v t_0^{\age(v)+1}))$, $v\in \Boxop$ 
and their derivatives.  
Take any element 
$t^b t_0^k \in I_\Delta^{(1)}$. 
Take a cone $\sigma\in \Sigma$ such that $b\in \sigma$.  
Then we can write $b = \sum_{i\in \sigma} l_i b_i + v$ 
for some $v\in \Boxop \cap \sigma$ and $l_i\in \Z_{\ge 0}$. 
The piecewise linear function $\varphi_1$ defined in Section 
\ref{subsec:mirrorconst} satisfies $\varphi_1(b_j) =1$ 
for all $1\le j\le m+s$. So we have $\varphi_1(b) = 
\sum_{i\in \sigma} l_i + \age(v)$. 
Since $t^b t_0^k \in I_\Delta^{(1)}$, 
we have $\varphi_1(b)+1 \le k$. 
By a direct calculation, we find 
\[
% \left(\textstyle \sum_{i=1}^{m+s} \alpha_i 
% \nabla_{\alpha_i}^{\rm B}\right )^{k-\varphi_1(b)-1} 
\prod_{i\in \sigma}   
(\nabla^{\rm B}_{\partial/\partial \alpha_i})^{l_i}
\Res\left(\rsR(t^v t_0^{\age(v)+1}) \right) 
=\Res\left(\rsR(t^bt_0^{\varphi_1(b)+1} )\right).   
\]
Using the Euler vector field 
$\tE= \sum_{i=1}^{m+s} \alpha_i (\partial/\partial \alpha_i)$, 
we find  
\[
(\nabla^{\rm B}_{\tE} + k-1) \cdots 
(\nabla^{\rm B}_{\tE} + \varphi_1(b)+1) 
\Res\left(\rsR(t^b t_0^{\varphi_1(b)+1})\right) 
= \Res\left(\rsR(t^b t_0^k)\right). 
\] 
Now the claim follows. 
By taking the derivatives of \eqref{eq:periodmatrix}, 
we know that the full \emph{period matrices} of $\varsigma^*(\rsH_{\rm A}, 
\nabla^{\rm A})$ and $(\rsH_{\,\rm B},\nabla^{\rm B})$ 
are the same. This shows that we have an isomorphism 
$\Mir_\cY \colon \varsigma^*(\rsH_{\rm A}, \nabla^{\rm A}) 
\cong (\rsH_{\, \rm B}, \nabla^{\rm B})$ 
sending $\Upsilon_v(q,1)$ to $\Res(\rsR(t^vt_0^{\age(v)+1}))$. 
Moreover, from the above calculation, 
it turns out that the generators $\Res(\rsR(t^b t_0^k))$, 
$t^bt_0^k\in I_\Delta^{(1)}$, $k\le p$ 
of $F^{n-p} W_{n-1}(H^n(\chY_\alpha)) 
\cong \rsF_{{\rm B},[\alpha]}^{\, n-p}$ 
correspond via $\Mir_\cY$ to elements in 
$(\varsigma^* \rsF_{\rm A}^{\, n-p})_{[\alpha]}  
\cong H^{\le 2(p-1)}_{\rm amb}(\cY)$. 
Hence $\Mir_\cY(\varsigma^* \rsF^{\, n-p}_{\rm A})
\supset \rsF^{\, n-p}_{\rm B}$. 
The other inclusion 
$\Mir_\cY(\varsigma^* \rsF^{\,n-p}_{\rm A}) 
\subset \rsF^{\, n-p}_{\rm B}$ 
can be easily seen by taking a basis of 
$\rsF^{\, n-p}_{\rm A}$ given by the 
covariant derivatives of $\Upsilon_v|_{z=1}$. 
Therefore 
$\varsigma^* (\rsH_{\rm A}, \nabla^{\rm A}, \rsF^{\,\bullet}_{\rm A}) 
\cong (\rsH_{\, \rm B}, \nabla^{\rm B}, \rsF^{\,\bullet}_{\rm B})$. 

Next we show that $\varsigma^* Q_{\rm A} 
= Q_{\rm B}$ under $\Mir_\cY$. 
We know by \eqref{eq:periodmatrix} that the 
``dual" flat sections $Q_{\rm A}(\cdot, \frs(\iota^*\cE))$ 
and $Q_{\rm B}(\cdot, \epsilon_{n-1} \VC(\Gamma_\cE))$ correspond 
to each other under $\Mir_\cY$. Therefore 
it suffices to show that $Q_{\rm A}(\frs(\iota^*\cE_1), 
\frs(\iota^* \cE_2))$ 
and $Q_{\rm B}(\VC(\Gamma_{\cE_1}), \VC(\Gamma_{\cE_2}))$ 
are equal for $\cE_1,\cE_2 \in K(\cX)$. 
We have 
\[
Q_{\rm A}(\frs(\iota^*\cE_1), \frs(\iota^* \cE_2)) 
= \chi_{\cY} (\iota^* \cE_1, \iota^* \cE_2) 
= \chi(\cE_1, \cE_2) - (-1)^n \chi(\cE_2, \cE_1). 
\]
Let $e^{\pi\iu} \Gamma_{\cE_i}$ denote the 
parallel translate of $\Gamma_{\cE_i} 
\in H_n(\chT, \{\Re(W_\alpha(t)/z) \gg 0\})$ 
along the path $[0,1]\ni \theta \mapsto e^{\pi\iu\theta} z$. 
Then we have 
\begin{align*} 
Q_{\rm B}(\VC(\Gamma_{\cE_1}), \VC(\Gamma_{\cE_2}))  
&= \epsilon_{n-1} 
\VC(\Gamma_{\cE_1}) \cdot \VC(\Gamma_{\cE_2}) \\
&= \epsilon_n   
(\Gamma_{\cE_1} \cdot e^{\pi\iu} \Gamma_{\cE_2}
+ (-1)^{n-1} \Gamma_{\cE_2} \cdot e^{\pi\iu} \Gamma_{\cE_1}). 
\end{align*} 
Since $\Mir_\cX$ preserves the pairing\footnote
{We made a sign error in \cite{Iritani:int} 
in matching the pairings under mirror symmetry. 
In \cite[Appendix A.3]{Iritani:int} we 
showed that the A-model and B-model pairings 
differ only by a constant. The constant is fixed by 
comparing $\Gamma_\R \cdot \Gamma_{\rm c}$ 
with $\chi(\cO_\cX, \cO_{\rm pt})=1$, where 
$\Gamma_\R = \chT_\R$ and $\Gamma_{\rm c} 
\cong (S^1)^n$. 
Taking the orientation into account, we find that 
$\Gamma_\R \cdot \Gamma_{\rm c}= (-1)^{n(n-1)/2}$ 
instead of $1$.   
So the B-model pairing should be multiplied by 
$\epsilon_{n}= (-1)^{n(n-1)/2}$ to have the complete match with 
the A-side.}, we have 
\[
\chi(\cE_1,\cE_2) = \epsilon_n 
\Gamma_{\cE_1} \cdot e^{\pi\iu} \Gamma_{\cE_2} 
\]
and $\varsigma^* Q_{\rm A} = Q_{\rm B}$ follows. 
Therefore, $\frs(\iota^*\cE)$ 
corresponds to $\VC(\Gamma_\cE)$ 
under $\Mir_\cY$ when $q$ is sufficiently close to $\bzero$. 
This shows the commutative diagram \eqref{eq:intstr_XandY}. 
\end{proof} 

The class $[\cO_{\rm pt}]\in K(\cY)$ of 
a skyscraper sheaf at a non-stacky point 
on $\cY$ gives a flat section $\frs(\cO_{\rm pt})$ of 
$\rsH_{\rm A}$. Usually $[\cO_{\rm pt}]$ is not 
contained in $\iota^*K(\cX)$, but we can still find 
an integral cycle on $\chcY_\alpha$ corresponding to it. 
\begin{theorem} 
\label{thm:Opt} 
Under the mirror isomorphism $\Mir_\cY$ 
in Theorem \ref{thm:intstr_Batyrevmirror}, 
the flat section $\frs(\cO_{\rm pt})$ corresponds 
to an integral compact $(n-1)$-cycle $C(\alpha) \subset \chY_\alpha$
i.e.\ 
\begin{equation} 
\label{eq:Opt} 
Q_{\rm A}(\phi, \frs(\cO_{\rm pt})) = 
\int_{C(\alpha)} \Mir_\cY(\phi)  
\end{equation} 
for any section $\phi$ of $\varsigma^*\rsH_{\rm A}$. 
In particular, we have  
$(2\pi\iu)^{n-1} F(q) = \int_{C(\alpha)} 
\Omega_\alpha$.  
\end{theorem}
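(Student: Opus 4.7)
The strategy is to reduce the statement to the known mirror identification of $\cO_{\rm pt}$ on the ambient side via the Laplace transform (Corollary \ref{cor:Laplace}), and then convert the resulting oscillatory integral over a compact cycle in $\chT$ into an integral over a compact cycle in $\chY_\alpha$ via the Poincar\'e--Leray residue.

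First, since $\iota_*[\cO_{\rm pt}]_\cY=[\cO_{\rm pt}]_\cX$ in $K$-theory, Corollary \ref{cor:Laplace} (with $c=1$) gives
\[
(2\pi\iu)\,\Pi_\cY(\Upsilon_v,\cO_{\rm pt})(\varsigma(q'_s),z)
=\hPi(\theta_v,\cO_{\rm pt})(q,s,z).
\]
By \cite[Theorem 4.11]{Iritani:int} applied to the ambient $\cX$, one has $\Mir_\cX^\Z([\cO_{\rm pt}]_\cX)=(S^1)^n\subset \chT$; write $\Gamma_{\rm c}=(S^1)^n$. Substituting the explicit oscillatory formula for $\Pi(\theta_v,\cO_{\rm pt})$ in the definition of $\hPi$, interchanging the $r$- and $t$-integrations (justified since $\Gamma_{\rm c}$ is compact), and performing the gamma integral in $r$, one obtains after specializing $s=1$ and using the Galois relation $\varsigma(q'_1)=\varsigma(e^{\pi\iu\txi_1}\cdot q)$ together with the substitution $\alpha\mapsto e^{\pi\iu}\alpha$ the closed-form identity
\[
(2\pi\iu)\,\Pi_\cY(\Upsilon_v,\cO_{\rm pt})(\varsigma(q),z)
=(-1)^{\age(v)}z^{\age(v)}\age(v)!\int_{\Gamma_{\rm c}}
\frac{\alpha^v t^v}{(1-W_\alpha(t))^{\age(v)+1}}
\frac{dt_1}{t_1}\wedge\cdots\wedge\frac{dt_n}{t_n},
\]
valid for $q$ near $\bzero$ so that $\Gamma_{\rm c}\subset \chT\setminus\chY_\alpha$.

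Second, I would interpret the right-hand side as a period over a compact cycle $C(\alpha)\subset \chY_\alpha$. The integrand is a meromorphic $n$-form on $\chT$ whose only pole lies along the smooth hypersurface $\chY_\alpha$, and whose Poincar\'e residue equals (up to the sign $(-1)^{\age(v)+1}$) the form $\Mir_\cY(\Upsilon_v(q,1))$ identified in Theorem \ref{thm:intstr_Batyrevmirror}. By the Leray residue theorem, for any cycle of the form $T(C)$ with $C\in H_{n-1}(\chY_\alpha;\Z)$ (a tube around $C$) one has $\int_{T(C)}\omega=2\pi\iu\int_{C}\Res_{\chY_\alpha}(\omega)$. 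Setting $C(\alpha)$ to be the Leray coboundary of $\Gamma_{\rm c}$ in $H_{n-1}(\chY_\alpha;\Z)$ and combining the displays produces \eqref{eq:Opt} when $\phi=\Upsilon_v(q,1)$. Since the sections $\{\Upsilon_v(q,1)\mid v\in\Boxop\}$ together with their $\nabla^{\rm A}$-covariant derivatives generate $\varsigma^*\rsH_{\rm A}$ as an $\cO_{\cM_{\rm reg}}$-module near $q=\bzero$ (as in the proof of Theorem \ref{thm:intstr_Batyrevmirror}), and since $\Mir_\cY$ intertwines $\nabla^{\rm A}$ with $\nabla^{\rm B}$ while the right-hand side of \eqref{eq:Opt} is $\cO$-linear and compatible with $\nabla^{\rm B}$ by differentiation under the integral sign, the identity extends to arbitrary $\phi$. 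The final formula $(2\pi\iu)^{n-1}F(q)=\int_{C(\alpha)}\Omega_\alpha$ then follows by setting $v=0$, using $\Upsilon_0(q,1)=F(q)\unit$ from Theorem \ref{thm:mirrorthm_tw} and $\Mir_\cY(\unit)=\Omega_\alpha$.

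The main obstacle is the Leray residue step: the compact torus $\Gamma_{\rm c}$ is a nontrivial absolute class in $H_n(\chT;\Z)$, so its image in $H_n(\chT\setminus\chY_\alpha;\Z)$ is not \emph{a priori} a tube cycle. The resolution is to pass to the toric compactification $\chcX$: because of $\Delta$-regularity, the meromorphic $n$-form $\alpha^v t^v\,dt/t/(1-W_\alpha)^{\age(v)+1}$ extends to $\chcX$ with residue supported on $\chcY_\alpha$ alone (the logarithmic contributions from the toric boundary $\chD$ pair trivially with $\Gamma_{\rm c}$ for dimension reasons), so $\Gamma_{\rm c}$ can be deformed within $\chcX\setminus\chcY_\alpha$ to a tube around an integral $(n-1)$-cycle $C(\alpha)$. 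Showing that $C(\alpha)$ is supported in the affine part $\chY_\alpha$, rather than escaping into $\chcY_\alpha\setminus\chY_\alpha$, is the most delicate point and would require careful analysis of the behavior of the deformation near $\chD\cap\chcY_\alpha$.
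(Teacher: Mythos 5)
Your proposal correctly identifies the central difficulty: the compact torus $\Gamma_{\rm c}=(S^1)^n$ represents a nontrivial class in $H_n(\chT;\Z)$, so its image in $H_n(\chT\setminus\chY_\alpha;\Z)$ is not \emph{a priori} in the image of the Leray tube map, and the naive Leray residue theorem does not apply. However, the "resolution" you sketch — passing to $\chcX$, claiming the residue is supported only on $\chcY_\alpha$, and asserting that $\Gamma_{\rm c}$ can be deformed to a tube around a compact cycle lying in the \emph{affine} part $\chY_\alpha$ — is precisely what you concede is "the most delicate point" requiring "careful analysis" that you do not supply. As written, this is a genuine gap: there is no argument controlling how the deformation behaves near $\chD\cap\chcY_\alpha$, and no construction of $C(\alpha)$ as an honest compact cycle in the affine hypersurface.

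The paper resolves this obstacle by a concrete slicing argument due to Przyjalkowski. Fix small $\alpha$ so that $(S^1_1)^n\cap\chY_\alpha=\emptyset$, and consider the one-parameter family of tori $(S^1_\epsilon)^n=\{|t_i|=\epsilon\}$ for $0<\epsilon\le 1$. As $\epsilon$ decreases from $1$, these tori sweep through $\chY_\alpha$; there is a $\delta>0$ with $(S^1_\epsilon)^n\cap\chY_\alpha=\emptyset$ for $\epsilon\le\delta$. Define $C(\alpha):=\bigcup_{\delta<\epsilon<1}\chY_\alpha\cap(S^1_\epsilon)^n$; this is by construction a compact $(n-1)$-cycle contained in the affine $\chY_\alpha$, and $(S^1_\delta)^n-(S^1_1)^n$ is a tube around it. One then shows by a growth estimate (the denominator $(W_\alpha(t)-1)^{\age(v)+1}$ dominates the numerator $\alpha^vt^v$ near $t=0$) that the integral over $(S^1_\delta)^n$ tends to zero as $\delta\to0$, hence is actually zero, so the integral over $\Gamma_{\rm c}=(S^1_1)^n$ equals the tube integral and the Leray residue theorem applies. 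This supplies the missing step and produces the cycle $C(\alpha)$ explicitly, with no need to pass to the compactification. Your first step (routing through Corollary \ref{cor:Laplace} and $\Mir_\cX^\Z(\cO_{\rm pt})=(S^1)^n$) is a viable alternative to the paper's direct power-series computation of both sides, but the second step as you have written it is incomplete.
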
 
\begin{proof} 
It suffices to prove \eqref{eq:Opt} for 
$\phi = \Upsilon_v(q,1)$, $v\in \Boxop$. 
We first show that for sufficiently small 
$\alpha\in (\C^\times)^{m+s}$ and $q = [\alpha]$, 
\[
Q_{\rm A}\left(
\Upsilon_v(q,1), \frs(\cO_{\rm pt})(\varsigma(q))
\right) 
= -\frac{1}{2\pi\iu} 
\int_{(S^1)^n} \frac{
(-1)^{\age(v)} \age(v)! 
\alpha^v t^v}{(W_\alpha(t) -1)^{1+\age(v)}} 
\frac{dt_1}{t_1}\wedge \cdots \wedge \frac{dt_n}{t_n} 
\]
where $(S^1)^n=\{|t_1| = \cdots = |t_n|=1\}$. 
By \eqref{eq:Upsilon} and the formula of $I_\cV^v$, 
we find that the left-hand side equals 
\[
(2\pi\iu)^{n-1} 
\sum_{\substack{\sum_{j=1}^{m+s} d_j b_j + v = 0 \\ 
d_j \in \Z_{\ge 0}}} 
\frac{\left(\sum_{j=1}^{m+s} d_j + \age(v)\right)!}{
\prod_{j=1}^{m+s} d_j ! } q^{d+v}.  
\]
On the other hand, by expanding 
$1/(1-W_\alpha(t))$ in geometric series,  
the right-hand side can be calculated as the 
residue at $t=0$. 
This is possible under the assumption that 
$|W_\alpha(t)|<1$ for all $t\in (S^1)^n$ 
(this holds when all $\alpha_j$ are sufficiently small). 
It is easy to see that the residue calculation 
gives the same answer as above. 
We now use the following argument by 
Przyjalkowski \cite[Section 2.5]{Przyjalkowski:weak}. 
For a fixed such $\alpha$, 
we consider the  family of compact tori $(S^1_\epsilon)^{n} 
= \{|t_1| = \cdots = |t_n|=\epsilon\}$ for 
$0<\epsilon\le 1$. 
At $\epsilon =1$, we have $(S^1_1)^n \cap \chY_\alpha = \emptyset$. 
While $\epsilon$ decreases from 1, 
this family of tori slices the hypersurface $\chY_\alpha$. 
For sufficiently small $\epsilon$, $(S^1_\epsilon)^n$ 
does not intersect $\chY_\alpha$ again. Let $0<\delta<1$ be 
such that $(S^1_\epsilon)^n \cap \chY_\alpha =\emptyset$ 
for $\epsilon \le \delta$. 
Then one can use $(S^1_\delta)^n - (S^1_1)^n$ as a 
tube cycle of the slice $C(\alpha) := 
\bigcup_{\delta<\epsilon<1} \chY_\alpha \cap (S^1_\epsilon)^n$.  
We can see that the integral over $(S^1_\delta)^n$ of the 
same integrand tends to zero as $\delta \to 0$ 
since the denominator grows faster than the numerator. 
From this it follows that the integral over $(S^1_\delta)^n$ 
is in fact zero and the right-hand side equals 
$\int_{C(\alpha)} \Mir_\cY(\Upsilon_v(q,1))$. 
The last statement follows from the case $v=0$. 
\end{proof}

\subsection{Multi-GKZ System} 
Batyrev \cite{Batyrev:mixed} 
showed that a rational period of affine hypersurfaces in $\chT$ 
satisfies the Gelfand-Kapranov-Zelevinsky (GKZ)  
hypergeometric differential system \cite{GKZ:hypergeom}. 
Borisov-Horja \cite{Borisov-Horja:better} 
showed that a certain collection of periods 
satisfies a multi-generator version of GKZ system, 
which they called \emph{better behaved GKZ system}. 
Here we see that the residual B-model VHS can 
be realized as a sub $D$-module of the $D$-module 
defined by the multi-GKZ system. 
This is related to the multi-generation phenomenon 
explained in the Introduction and Remark \ref{rem:multigen}. 
In joint work \cite{CCIT:toric} with Coates, Corti and Tseng, 
we also found that the multi-GKZ system arises for 
the quantum $D$-module of a toric orbifold $\cX$ 
\emph{itself}\footnote
{The multi-generation occurs typically for non-compact $\cX$. 
It can also occur for compact $\cX$ which does not 
satisfy the assumption in Section \ref{subsec:toricorb}.}.

Set $\hbN := \bN \oplus \Z$ and 
$\hb_i = (b_i,1)\in \hbN$. 
We still assume $b_i \neq 0$ for all $i$ 
and set $\hb_0 = (0,1)$. 
For simplicity we set $N:=m+s$.  
Let $\hDelta$ be the cone in the vector space 
$\hbN_\R = \hbN\otimes \R$ generated by $\Delta\times \{1\}$. 
Let $\bm_1,\dots,\bm_n$ be a basis of $\bM = \Hom(\bN,\Z)$. 
Let $\bm_0\in \Hom(\hbN,\Z)$ be the projection to the 
second factor. We can then regard $\bm_0,\bm_1,\dots,\bm_n$ 
as a basis of $\hbM :=\Hom(\hbN,\Z)$. 
% In the following we do not need the assumption that 
% $\Delta$ is reflexive. 

\begin{definition}[{\cite{Borisov-Horja:better, CCIT:toric}}] 
The \emph{multi-GKZ system} associated to 
$\{\hb_0,\hb_1,\dots, \hb_N = \hb_{m+s}\}$ 
is the system of differential 
equations for a family 
$\{ \varpi_{\be}(\alpha_0,\dots,\alpha_N)\,|\, 
\be \in \hbN \cap \hDelta\}$ 
of functions on $(\C^\times)^{N+1}$ 
given by $D_{\nu;\be,\be'} = Z_{i,\be}=0$, where 
\begin{align*} 
D_{\nu; \be,\be'} & := \prod_{i=0}^{N} 
\left( \parfrac{}{\alpha_i}\right)^{\nu_{+,i}} 
\varpi_\be  - 
\prod_{i=0}^{N} 
\left( \parfrac{}{\alpha_i} \right)^{\nu_{-,i}} 
\varpi_{\be'} \\
Z_{i,\be} & := \sum_{j=0}^{N} \pair{\bm_i}{\hb_j}
\alpha_j \parfrac{\varpi_\be}{\alpha_j}  
+ (\pair{\bm_i}{\be} -\beta_i)  \varpi_\be, 
\quad 0\le i\le n,  
\end{align*} 
$\nu$ runs through all elements in 
$\Z^{N+1}$ satisfying 
$\be' = \be + \sum_{i=0}^{N} \nu_i \hb_i$ 
and $\nu_+, \nu_-\in (\Z_{\ge 0})^{N+1}$ 
are given by $\nu_{\pm,i} = \max(\pm \nu_i, 0)$
(then $\nu = \nu_+ - \nu_-$). The constants 
$\beta_0,\dots,\beta_n$ are called \emph{exponents}. 
Alternatively, we can regard the multi-GKZ system as a $D$-module on 
$(\C^\times)^{N+1}$ defined by 
\begin{equation} 
\label{eq:multi-GKZ-Dmod}
\bigoplus_{\be\in \hbN \cap \hDelta} 
\cD \varpi_\be 
\biggl/ \sum_{\be,\be',\nu:\, \text{as above}} 
\cD Z_{\nu;\be,\be'} + 
\sum_{1\le i\le N, \be\in \hbN\cap \hDelta} 
\cD Z_{i,\be} 
\end{equation} 
where $\varpi_\be$ here is a formal symbol
and $\cD= \C\left\langle \alpha_0^\pm,\dots,\alpha_N^\pm, 
\partial_{\alpha_0},\dots,\partial_{\alpha_N} \right\rangle$. 
\end{definition} 

It is easy to see that the multi-GKZ system 
is generated by finitely many $\varpi_\be$.  
In this section, for $\alpha=(\alpha_0,\dots,\alpha_N) 
\in (\C^\times)^{N+1}$, we set 
$W_\alpha(t) = \alpha_0 + \sum_{i=1}^N \alpha_i t^{b_i}$  
and $\chY_\alpha := \{t\in \chT\, |\, 
W_\alpha(t)=0\}$. 
Then $W_\alpha(t)$ and $\chY_\alpha$ in the previous 
section are recovered by setting $\alpha_0 = -1$. 
Note that $\chY_{(\alpha_0,\alpha')} = \chY_{-\alpha_0^{-1}\alpha'}$
for $\alpha_0 \in \C^\times$, $\alpha' \in (\C^\times)^N$. 
Take a locally constant section 
$\Gamma(\alpha) \in H_n(\chT,\chY_\alpha;\Z)$ 
of the relative homology bundle over the $\alpha$-space. 
Let $C(\alpha) := \partial \Gamma(\alpha) 
\in H_{n-1}(\chY_\alpha;\Z)$ be its boundary. 
For $\be = (b,k)\in \hDelta \cap \hbN$ we set 
\[
\Pi_\be(\alpha) := 
(-1)^{k-1} (k-1)! \int_{C(\alpha)} 
\Res\left( \frac{t^{b}}{W_\alpha(t)^k} \frac{dt_1}{t_1} 
\wedge \cdots \wedge \frac{dt_n}{t_n} \right) 
\]
for $k>0$ and for $k=0$ (in this case $\be =0$),  
\[
\Pi_0(\alpha) := 
- \int_{\Gamma(\alpha)} \frac{dt_1}{t_1} \wedge 
\cdots \wedge \frac{dt_n}{t_n}. 
\] 
The integrands of these period integrals 
$\Pi_\be(\alpha)$ generate 
the relative cohomology bundle 
$\bigcup_\alpha H^n(\chT, \chY_\alpha)$ 
by the results of Batyrev \cite{Batyrev:mixed} 
and Stienstra \cite{Stienstra:resonant} 
(see also \cite{Konishi-Minabe:local}). 

\begin{proposition} 
The set of functions $\Pi_\be(\alpha)$, $\be \in \hbN \cap \hDelta$ 
satisfies the multi-GKZ system with exponent 
$\beta = (0,0,\dots,0)$.  
\end{proposition}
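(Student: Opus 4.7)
The proof reduces to verifying two kinds of relations: the shift equations $D_{\nu;\be,\be'}\Pi = 0$ and the Euler equations $Z_{i,\be}\Pi = 0$. Both follow from the single key identity
\begin{equation*}
\parfrac{}{\alpha_i}\Pi_\be(\alpha) = \Pi_{\be+\hb_i}(\alpha), \qquad 0\le i\le N, \ \be \in \hbN\cap\hDelta.
\end{equation*}
For $\be = (b,k)$ with $k\ge 1$, the identity is immediate from differentiating the residue integrand (the tube $T(C(\alpha))$ can be chosen at positive distance from $\chY_\alpha$ and transported continuously with $\alpha$): one has $\partial_{\alpha_i}(t^b/W_\alpha^k) = -k\, t^{b+b_i}/W_\alpha^{k+1}$ with $b_0 = 0$, and the factor $-k$ combines with $(-1)^{k-1}(k-1)!$ to give the correct prefactor $(-1)^k k!$ for $\Pi_{(b+b_i,k+1)}$. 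The nontrivial case is $\be = 0$, where one must differentiate $\Pi_0(\alpha) = -\int_{\Gamma(\alpha)}\Omega$ and the relative cycle $\Gamma(\alpha)\subset\chT$ itself moves with $\alpha$ through its boundary $C(\alpha)\subset\chY_\alpha$. Parametrizing by a smooth family $\phi_\alpha$ of diffeomorphisms and writing $V_i$ for the velocity field in direction $\alpha_i$, closedness of $\Omega$ together with Cartan's formula reduces $\partial_{\alpha_i}\int_{\Gamma(\alpha)}\Omega$ to a boundary integral involving the interior product of $V_i$ with $\Omega$. Since $W_\alpha$ vanishes identically on $\phi_\alpha(\partial\Gamma_0)$, differentiation in $\alpha_i$ gives $\pair{dW_\alpha}{V_i} = -\partial_{\alpha_i}W_\alpha = -t^{b_i}$ along $C(\alpha)$; combined with the Leray-residue identity this yields $\partial_{\alpha_i}\Pi_0 = \Pi_{\hb_i}$.

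Iterating the key identity, $\prod_i (\partial/\partial\alpha_i)^{n_i}\Pi_\be = \Pi_{\be+\sum_i n_i\hb_i}$ for any $n\in(\Z_{\ge 0})^{N+1}$. Consequently, for $\be$, $\be'$, $\nu$ as in the definition of $D_{\nu;\be,\be'}$, both terms of the operator act on $\Pi$ to produce the same function $\Pi_{\be+\sum_i\nu_{+,i}\hb_i}$ (which equals $\Pi_{\be'+\sum_i\nu_{-,i}\hb_i}$ since $\be' = \be + \sum_i \nu_i\hb_i$), so $D_{\nu;\be,\be'}\Pi = 0$.

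For the Euler equations with exponent $\beta = 0$: when $1 \le i \le n$, use the natural $\chT$-equivariance. The torus acts on parameters by $s\cdot\alpha = (\alpha_0, s^{b_1}\alpha_1,\ldots, s^{b_N}\alpha_N)$, and the translation $t\mapsto s\cdot t$ intertwines $\chY_{s\cdot\alpha}$ with $\chY_\alpha$ while pulling back the integrand of $\Pi_{(b,k)}$ by a factor of $s^{-b}$; choosing cycles equivariantly yields $\Pi_\be(s\cdot\alpha) = s^{-b}\Pi_\be(\alpha)$. Differentiating at $s=1$ in direction $\bm_i \in \bM \subset \hbM$, and unwinding $\pair{\bm_i}{\hb_j} = \pair{\bm_i}{b_j}$ and $\pair{\bm_i}{\be} = \pair{\bm_i}{b}$, gives exactly $Z_{i,\be}\Pi_\be = 0$. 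The remaining case $i = 0$ uses overall scaling: $W_{\lambda\alpha} = \lambda W_\alpha$ gives $\chY_{\lambda\alpha} = \chY_\alpha$ and hence the homogeneity $\Pi_{(b,k)}(\lambda\alpha) = \lambda^{-k}\Pi_{(b,k)}(\alpha)$, whose infinitesimal form --- since $\pair{\bm_0}{\hb_j} = 1$ for all $j$ and $\pair{\bm_0}{\be} = k$ --- is precisely $Z_{0,\be}\Pi_\be = 0$.

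The only step requiring genuine analysis is the $\be = 0$ case of the shift identity, which rests on the boundary-variation computation with the Leray residue sketched above; every other step is formal manipulation of residues, change of variables on the torus, and iterated differentiation under the integral sign.
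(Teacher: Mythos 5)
Your proof is correct and follows the same route the paper describes without carrying out: the paper cites the formula for $\partial_{\alpha_j}\Pi_0(\alpha)$ from Konishi-Minabe and invokes Batyrev's method for the remaining ``easy direct calculation,'' whereas you derive that key shift identity $\partial_{\alpha_i}\Pi_\be = \Pi_{\be+\hb_i}$ yourself --- via the boundary-variation argument using Cartan's formula and the Leray residue for the delicate $\be=0$ case, and direct differentiation of the residue integrand for $k\geq 1$ --- and then deduce the Euler equations $Z_{i,\be}$ from $\chT$-equivariance ($1\le i\le n$) and overall scaling ($i=0$). The signs and prefactors you track (e.g.\ $(-k)\cdot(-1)^{k-1}(k-1)! = (-1)^k k!$ and $\pair{dW_\alpha}{V_i} = -t^{b_i}$) all check out, so your self-contained version is a valid replacement for the paper's citation-heavy outline.
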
 
\begin{proof} 
Borisov-Horja \cite[Proposition 5.2]{Borisov-Horja:better} 
proved a similar result.  
Stienstra \cite{Stienstra:resonant} proved that  
$\Pi_0$ satisfies the ordinary GKZ system. 
The proposition here can be proved by an easy direct calculation, 
using the formula for $\partial_{\alpha_j} \Pi_0(\alpha)$ of 
Konishi-Minabe \cite[Section 4.3]{Konishi-Minabe:local} 
and the method of Batyrev \cite[Theorem 14.2]{Batyrev:mixed}. 
\end{proof} 

The relative cohomology bundle 
$\bigcup_\alpha H^n(\chT,\chY_\alpha)$ 
has the rank $\Vol(\Delta)$ \eqref{eq:relhom} 
and the multi-GKZ system has the same rank 
by Borisov-Horja \cite[Section 3]{Borisov-Horja:better}. 
Thus they are isomorphic as a local system. 
Because $W_{n-1}(H^{n-1}(\chY_\alpha)) \to 
H^n(\chT,\chY_\alpha)$ is injective (see \eqref{eq:dualincl}), 
the residual B-model VHS is embedded in the multi-GKZ system. 
Note that $\Pi_\be(\alpha)$ is a period of 
the residual B-model VHS if $\be$ is 
in the interior of $\hDelta$ and also that 
the corresponding residue classes on $\chcY_\alpha$ 
generate the B-model VHS. 
Therefore we have the following. 

\begin{theorem} 
\label{thm:CYhyp_multiGKZ} 
Let $\pi\colon (\C^\times)^{N+1} \to \cM$ 
be the map sending $(\alpha_0,\alpha')$ 
to $[-\alpha_0^{-1} \alpha']$ 
and set $(\C^\times)^{N+1}_{\rm reg} 
= \pi^{-1}(\cM_{\rm reg})$. 
Under the pull-back by 
$\pi \colon (\C^\times)^{N+1}_{\rm reg} \to \cM_{\rm reg}$, 
the residual B-model VHS 
is isomorphic to the sub $D$-module of the multi-GKZ system 
\eqref{eq:multi-GKZ-Dmod} 
generated by $\varpi_\be$ such that $\be$ is in the interior 
of the cone $\hDelta$. 
In particular, the ambient A-model VHS is embedded 
in the multi-GKZ system under mirror isomorphism  
of Theorem \ref{thm:intstr_Batyrevmirror}. 
\end{theorem}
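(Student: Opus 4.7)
The plan is to construct the embedding in two stages: first identify the pulled-back relative cohomology bundle $\bigcup_\alpha H^n(\chT,\chY_\alpha)$ with the multi-GKZ system as a $D$-module, then locate the residual B-model VHS inside it as the sub $D$-module generated by interior $\varpi_\be$.

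For the first stage, combine the proposition immediately preceding the theorem with the rank count: the integrands $(t^b/W_\alpha^k)\,\tfrac{dt_1}{t_1}\wedge\cdots\wedge\tfrac{dt_n}{t_n}$ generate the relative cohomology bundle by Batyrev \cite{Batyrev:mixed} and Stienstra \cite{Stienstra:resonant}, their periods satisfy the multi-GKZ equations with trivial exponent, and both sides have rank $\Vol(\Delta)$ (by \eqref{eq:relhom} and Borisov-Horja). Hence the map $\varpi_\be \mapsto \bigl[(t^b/W_\alpha^k)\tfrac{dt_1}{t_1}\wedge\cdots\wedge\tfrac{dt_n}{t_n}\bigr]$ is an isomorphism of $D$-modules on $\pi^{-1}(\cM_{\rm reg})$, because an injection of $D$-modules with the same generic rank that is compatible with period pairings must be an isomorphism.

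For the second stage, use the dual of the exact sequence from the proof of Lemma \ref{lem:VC} to embed $W_{n-1}(H^{n-1}(\chY_\alpha)) \cong H^{n-1}_{\rm res}(\chcY_\alpha)$ into $H^n(\chT,\chY_\alpha)$. Under the $D$-module isomorphism above, I expect this sub-local system to correspond precisely to the sub $D$-module generated by $\varpi_\be$ with $\be$ in the interior of $\hDelta$. The forward inclusion is clear: for $\be=(b,k)$ with $b$ in the interior of $k\Delta$, the form $\rsR(t^b t_0^k)$ extends to a meromorphic $n$-form on $\chcX\setminus\chcY_\alpha$ (see the proof of Theorem \ref{thm:intstr_Batyrevmirror}), so its residue lies in $H^{n-1}_{\rm res}(\chcY_\alpha)$. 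For the reverse inclusion, Batyrev's theorem (\cite[Theorem 8.1, 8.2]{Batyrev:mixed}), as recalled in the proof of Theorem \ref{thm:intstr_Batyrevmirror}, says that the residues $\Res(\rsR(t^b t_0^k))$ with $(b,k)$ in the interior of $\hDelta$ and $k\le p$ generate $F^{n-p}W_{n-1}(H^n(\chY_\alpha))$ as $\cO$-modules; applying logarithmic derivatives (which correspond to the Gauss-Manin connection and hence to the multi-GKZ action of $\alpha_i \partial_{\alpha_i}$) sweeps out the entire residual B-model VHS. Compatibility with Griffiths transversality is automatic from the Hodge-theoretic description of $F^\bullet$ in terms of pole order.

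The main obstacle will be verifying that the sub $D$-module generated by the interior $\varpi_\be$ is exactly the residue part, neither larger nor smaller. Smallness is ruled out by the $\cO$-generation argument above; largeness requires checking that boundary generators $\varpi_\be$ (with $\be\in \partial\hDelta\cap\hbN$) are not obtained by applying operators in $\cD$ to interior ones, which follows because their period integrands land in the quotient $H^n(\chT,\chY_\alpha)/W_{n-1}(H^{n-1}(\chY_\alpha))$ coming from $H^n(\chT)$ and $\bigoplus_i f_{i*}H^{n-3}(\chcY_\alpha\cap \chD_i)$, which is complementary and Gauss-Manin-invariant. The last assertion of the theorem is then immediate by composing this embedding with the mirror isomorphism $\Mir_\cY$ of Theorem \ref{thm:intstr_Batyrevmirror}.
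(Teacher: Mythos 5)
Your proof follows essentially the same two-stage route as the paper: identify the pulled-back relative cohomology bundle with the full multi-GKZ system (the preceding Proposition, the rank count $\Vol(\Delta)$ on both sides from \eqref{eq:relhom} and Borisov-Horja, and Batyrev/Stienstra generation), then embed the residue part as the sub $D$-module generated by interior $\varpi_\be$ using the injectivity from \eqref{eq:dualincl} together with Batyrev's generation of the Hodge filtration and covariant derivatives. The one superfluous step is your ``largeness'' check at the end: since every interior $\varpi_\be$ already maps into $H^{n-1}_{\rm res}(\chcY_\alpha)$, which is a $D$-submodule, the sub $D$-module they generate is automatically contained in the residue part, so the status of the boundary $\varpi_\be$ need not be examined at all --- and your claim that boundary integrands land entirely in the complementary summand of \eqref{eq:decomp_Mav} is neither evidently true nor needed.
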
 

\begin{remark} 
We will see that the multi-GKZ system here 
describes the quantum $D$-module of the total space of $K_\cX$ 
in \cite{CCIT:toric}.  
So $QDM_{\rm amb}(\cY)$ is contained in $QDM(K_\cX)$. 
\end{remark} 

\begin{remark} 
The above identification between the multi-GKZ and 
the relative cohomology 
introduces a mixed Hodge structure on the multi-GKZ system. 
In the context of orbifold mirror symmetry, 
Corti-Golyshev \cite{Corti-Golyshev:wp} 
studied the hypergeometric system 
associated to weighted projective 
Calabi-Yau hypersurfaces 
and its Hodge structure. 
\end{remark} 

\begin{remark}
Mann-Mignon \cite{Mann-Mignon} obtained another (but 
closely related) 
description of the quantum $D$-module of 
a nef complete intersection in a smooth toric variety. 
\end{remark} 

\subsection{Questions and Example} 
In Theorem \ref{thm:intstr_Batyrevmirror}, 
we showed the correspondence between
\emph{vanishing cycles} on $\chcY_\alpha$ 
and \emph{ambient $K$-classes} on $\cY$. 
This match of the integral structures is not 
completely satisfactory. For example, the 
class $[\cO_{\rm pt}]$ on $\cY$ would not 
be contained in $\iota^* K(\cX)$, but the 
corresponding mirror cycle exists (see 
Theorem \ref{thm:Opt}). 
We can also consider different 
integral structures which might be more natural. 
For example, on the A-side, we can take 
the integral structure 
\[
H_{{\rm A},\Z} = \{\frs(\cE)\;|\; \tch(\cE) \in H_{\rm amb}^*(\cY), 
\cE \in K(\cY) \}.  
\]
This is bigger than $H_{{\rm A},\Z}^{\rm amb}$ 
in general. On the B-side we could consider 
the integral structure 
$W_{n-1}(H^{n-1}(\chY_\alpha)) \cap 
H^{n-1}(\chY_\alpha;\Z)$. 
When we can choose a smooth compactification $\chcY_\alpha$, 
we could also take the integral structure 
$H_{\rm res}^{n-1}(\chcY_\alpha)
\cap H^{n-1}(\chcY_\alpha;\Z)$. 
\begin{question} 
What is the integral structure in the B-model 
corresponding to $H_{{\rm A},\Z}$? 
\end{question} 
Yongbin Ruan asked the following question 
to the author. 
\begin{question} 
What is the ``correct" definition of the 
integral middle homology group of the orbifold 
$\chcY_\alpha$ in this context? 
\end{question} 
Mirror symmetry for the orbifold Hodge numbers of 
toric Calabi-Yau hypersurfaces 
was studied by Borisov-Mavlyutov \cite{Borisov-Mavlyutov}. 
\begin{question} 
Can one extend the mirror isomorphism of VHS  
beyond the ambient part/residue part? 
Then what is the integral structure in the B-model VHS 
on the full orbifold cohomology of $\chcY_\alpha$?  
\end{question} 

\begin{example} 
We first consider the simplest example of 
an elliptic curve $\cY$ in $\Proj^2$. 
The mirror is defined by $W_\alpha(t_1,t_2) = 
\alpha_1 t_1 +\alpha_2 t_2 + \alpha_3 (t_1t_2)^{-1}$. 
The $\C^\times$-co-ordinate $q$ on $\cM \cong \C^\times$ 
is given by $q= \alpha_1 \alpha_2 \alpha_3$. 
We choose a section $(\alpha_1,\alpha_2,\alpha_3 ) 
 = (1,1,q)$ and work 
with $W_q(t) = t_1 + t_2 + q (t_1t_2)^{-1}$. 
The mirror hypersurface $\chY_q = \{W_q(t)=1\}$ is 
an elliptic curve minus 3 points.  
The function $W_q(t)$ has the three critical values 
$3q^{1/3}$, $3 \omega q^{1/3}$, $3\omega^2 q^{1/3}$, 
where $\omega = e^{2\pi\iu/3}$. 
Let $q>0$ be small and take vanishing paths from 
the three critical values to 1 
as shown in Figure \ref{fig:P2}. 
\begin{figure}[htbp]  
\begin{center}
\begin{picture}(300,60) 
\put(50,50){\makebox(0,0){$\bullet$}}
\put(50,00){\makebox(0,0){$\bullet$}} 
\put(93,25){\makebox(0,0){$\bullet$}}
\put(250,25){\makebox(0,0){$\bullet$}} 
\path(50,50)(250,25) 
\path(93,25)(250,25)
\path(50,0)(250,25) 
%\qbezier(50,50)(250,50)(250,25) 
%\path(93,25)(250,25)
%\qbezier(50,0)(250,0)(250,25) 
\put(35,50){\makebox(0,0){$3 \omega q^{\frac{1}{3}}$}} 
\put(33,0){\makebox(0,0){$3\omega^2 q^{\frac{1}{3}}$}} 
\put(83,25){\makebox(0,0){$3 q^{\frac{1}{3}}$}} 
\put(257,25){\makebox(0,0){$1$}}
\put(300,25){\makebox(0,0){(smooth fiber)}}
\put(160,45){\makebox(0,0){$\cO(-1)$}}
\put(145,30){\makebox(0,0){$\cO$}}
\put(155,5){\makebox(0,0){$\cO(1)$}}
% \put(220,50){\makebox(0,0){$\cO(-1)$}}
% \put(195,30){\makebox(0,0){$\cO$}}
% \put(220,0){\makebox(0,0){$\cO(1)$}}
\end{picture}
\end{center} 
\caption{Vanishing Paths}
\label{fig:P2} 
\end{figure}
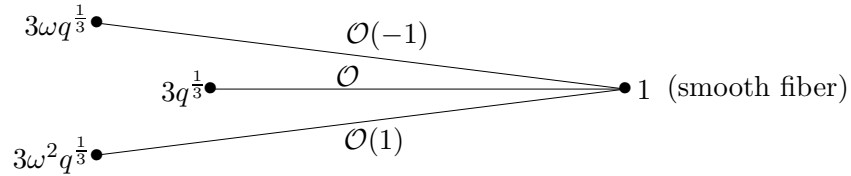 
The Lefschetz thimble $\Gamma_i$ 
from $3\omega^{-i} q^{1/3}$ ($i=-1,0,1$)  
along the given vanishing path 
corresponds to the line bundle $\cO(i)$ on $\Proj^2$. 
The vanishing cycle $C_i = 
\partial \Gamma_i \subset \chY_q$ 
corresponds to $\iota^*\cO(i)$ under 
$\Mir_\cY^\Z$. 
For a suitable symplectic basis $\{A, B\}$ 
of $H_1(\ov{\chY}_\alpha,\Z)$, we have 
\begin{align*} 
C_{\pm 1} = A \pm 3B, \quad C_0 = A. 
\end{align*} 
Similarly for a basis $\{\cO_{\rm pt}, \cO_{\cY}\}$ 
of the topological $K$-group 
$K(\cY)$ of $\cY$, we have 
\[
\iota^*\cO(\pm 1) = \cO_{\cY} \pm 3 \cO_{\rm pt}, \quad 
\iota^* \cO = \cO_{\cY}    
\]
in $K(\cY)$. 
Therefore $\Mir_\cY^\Z$ extends to
an isomorphism of the overlattices in this case. 
\[
H_{{\rm A},\Z} \cong K(\cY) 
\cong H_1(\ov{\chY}_\alpha;\Z).  
\] 
\end{example} 
\begin{example} 
Next we consider a quintic threefold $\cY$ in $\Proj^4$, 
famous example studied in \cite{CdOGP}. 
The mirror of $\cY$ is defined by the function 
$W_q(x_1,\dots, x_4) = x_1 + x_2 + x_3 + x_4 + x_5 + 
q/(x_1x_2 x_3 x_4 x_5)$ 
with one complex parameter $q\in \C^\times$. 
The affine hypersurface $\chY_q = \{z\in \chT\,|\,W_q(x) =1\}$ 
can be compactified to a smooth Calabi-Yau manifold $\chcY_q$. 
In this case, the ambient A-model VHS of $\cY$ 
is a flat bundle of rank 4 with fiber 
$\bigoplus_{p=0}^3 H^{2p}(\cY) = \bigoplus_{p=0}^3 H^{p,p}(\cY)$ 
and the residual B-model VHS of $\chcY_q$ 
is a flat bundle with fiber $H^3(\chcY_q)$. 
We shall show that the isomorphism of 
the $\Z$-structures in Theorem \ref{thm:intstr_Batyrevmirror} 
\[
\Mir_\cY^\Z \colon 
H_{\rm A,\Z}^{\rm amb} 
= \iota^* K(\cX)  \overset{\cong}{\longrightarrow} 
H_{\rm B,\Z}^{\rm vc} \subset 
H_3(\chcY_q;\Z) 
\]
extends to an isomorphism $K(\cY) \cong H_3(\chcY_q;\Z)$
of the overlattices. 
(Here as usual $K(\cY)$ denotes the topological $K$-group.) 
By the Atiyah-Hirzebruch spectral sequence, we know that 
$K(\cY)$ is a free $\Z$-module generated by 
$\cO_{\rm pt}$, $\cO_C$, $\cO_D$, $\cO_\cY$ where 
$C\subset \cY$ is a line and $D = \cY \cap \Proj^3$ 
is a hyperplane section. 
Under the isomorphism $K(\cY) \otimes \Q 
\cong H_3(\chcY_q;\Q)$ of rational vector spaces, 
the dimension filtration 
$K_{\le 0} \subset K_{\le 1} \subset K_{\le 2} \subset 
K_{\le 3} = K(\cY)$ induces the filtration 
$W_0 \subset W_1 \subset W_2 \subset W_3 = H_3(\chcY_q;\Q)$. 
For $\cE \in K(\cY)$, we denote the corresponding 
element in $H_3(\chcY_q;\Q)$ by the same symbol. 
We set $W_i^\Z = W_i \cap H_3(\chcY_q;\Z)$. 
We have $W_0 = \Q [\cO_{\rm pt}]$. 
For $\alpha \in W_3^\Z$, we have 
$\alpha - (\alpha\cdot [\cO_{\rm pt}]) [\cO_\cY] 
\in W_0^\perp = W_2$. Here the intersection 
number $\alpha\cdot [\cO_{\rm pt}]$ is an 
integer since $[\cO_{\rm pt}]\in H_3(\chcY_q;\Z)$ 
by Theorem \ref{thm:Opt}. 
This shows that 
\[
W_3^\Z = W_2^\Z + \Z [\cO_\cY]. 
\]
It is easy to see that the perfect intersection 
pairing on $W_3^\Z$ induces a perfect pairing 
on $W_2^\Z/W_0^\Z$. We know that $[\cO_D] = 
[\cO_\cY -\cO_\cY(-1)]$ 
and $[\cO_D]^2 = 5 ([\cO_C]-2 [\cO_{\rm pt}])$ 
belong to $\iota^*K(\cX)$ 
and thus to $H_3(\chcY_q,\Z)$. 
They also form a rational basis of $W_2/W_0$. 
Take an element 
$a [\cO_D] + b [\cO_D]^2 \in W_2^\Z/W_0^\Z$. 
By taking the tensor product with $\cO_\cY(-1)$ 
which corresponds to the monodromy transformation 
around $q=0$, we have $a [\cO_D]^2 \in W_2^\Z/W_0^\Z$. 
Hence $(a [\cO_D] + b[\cO_D]^2) \cdot (a [\cO_D]^2) 
= 5 a^2 \in \Z$. Thus $a\in \Z$. 
Therefore 
\[
W_2^\Z = W_1^\Z + \Z [\cO_D]. 
\]
Moreover the perfectness of the pairing on 
$W_2^\Z/W_0^\Z$ implies that $[\cO_C] = [\cO_D]^2/5
\in W_2^\Z/W_0^\Z$ and that $W_2^\Z/W_0^\Z 
= \Z [\cO_C] \oplus \Z [\cO_D]$. 
By pairing with $[\cO_\cY]$ again 
we know that $W_0^\Z = \Z [\cO_{\rm pt}]$. 
These show that $K(\cY) \cong W_3^\Z = 
H_3(\chcY_q;\Z)$. 
\end{example} 

\begin{remark} 
Hartmann \cite{Hartmann:period} studied Hodge-theoretic 
mirror symmetry for a quartic K3 surface and identified 
the mirror periods with certain hypergeometric functions. 
\end{remark}

\bibliographystyle{plain}
\bibliography{qcohperiod.bib} 

\end{document}